\numberwithin{equation}{section}
\def\arg{\operatorname{arg}}
\newtheorem{lem}{Lemma}[section]
\newtheorem{thm}{Theorem}[section]
\newtheorem{cor}{Corollary}[section]
\theoremstyle{definition}
\newtheorem{defi}{Definition}[section]
\theoremstyle{remark}
\newtheorem*{re}{Remark}
\title[Holomorphic Curves]{Propagation Sets of Holomorphic Curves}
\author[Zheng and Yan]{ Zheng Jian-Hua and Yan Qi-Ming}
\thanks{{2000 Mathematics Subject Classification: Primary 32H30, Secondary 30D35}\\
{\mbox{} \hspace{0.56em} Funding: This work was supported by the National Natural Science Foundation of China
[grant numbers 11571193, 11571256].
}}
\address{Department of Mathematical Sciences, Tsinghua University, Beijing, 100084, P. R. China}
\email{zheng-jh@mail.tsinghua.edu.cn}
\address{School of Mathematical Sciences, Tongji
University,  Shanghai 200092, P. R. China}
\email{yan$\underline{\mbox{ }}$qiming@hotmail.com }
\keywords{Holomorphic Curves, Uniqueness
Problem, Propagation Set}
\begin{document}

\begin{abstract}
We consider a problem of whether a property of holomorphic curves on
a subset $X$ of the complex plane can be extended to the whole
complex plane. In this paper, the property we consider is uniqueness
of holomorphic curves. We introduce the propagation set. Simply
speaking, $X$ is a propagation set if linear relation of holomorphic
curves on the part of preimage of hyperplanes contained in $X$ can
be extended to the whole complex plane. If the holomorphic curves
are of infinite order, we prove the existence of a propagation set
which is the union of a sequence of disks (In fact, the method
applies to the case of finite order). For a general case, the union
of a sequence of annuli will be a propagation set. The classic
five-value theorem and four-value theorem of R. Nevanlinna are
established in such propagation sets.
\end{abstract}

\maketitle

\section{Introduction and Results}\label{intr}

Let $X$ be a subset of the complex plane $\mathbb{C}$. Let $f$ and
$g$ be two meromorphic functions on $\mathbb{C}$. We say that $f$
and $g$ share value $a$ IM on $X$ if $f^{-1}(a)\cap X=g^{-1}(a)\cap
X$, that is, $f(z)=a$ on $X$ if and only if $g(z)=a$ on $X$; $f$ and
$g$ share value $a$ CM on $X$ if $f^{-1}(a)\cap X=g^{-1}(a)\cap X$
counted according to multiplicities. R. Nevanlinna proved the
five-value theorem that if $f$ and $g$ share five distinct values IM
on $\mathbb{C}$, then $f\equiv g$. We consider the problem of
whether $\mathbb{C}$ in the five-value theorem is replaced by a
precise subset $X$ of $\mathbb{C}$.

Therefore, we introduce a concept: A subset $X$ of $\mathbb{C}$ is
said to be a five-value uniqueness set of two meromorphic functions
$f$ and $g$ if it is true that $f\equiv g$ as long as $f$ and $g$
share five distinct values IM on $X$.

Let $f$ be a meromorphic function. If it is transcendental, the
Picard Theorem says that $f$ can take infinitely often all but at
most two of values on the extended complex plane $\hat{\mathbb{C}}$.
It is improved by the Borel Theorem that the convergence exponent of
$a$-points of $f$ in $\mathbb{C}$, with exception of at most two
values $a$ of $\hat{\mathbb{C}}$, is equal to the growth order of
$f$. Certainly, it makes sense only when the growth order of $f$ is
positive or $\infty.$ The result of Borel Theorem also holds even on
a sequence of disks instead of $\mathbb{C}$, which are so-called
filling disks of $f$. That is, the convergence exponent of
$a$-points of $f$ on the sequence of disks equals to the growth
order of $f$ with exception of at most two values $a$ of
$\hat{\mathbb{C}}$ (cf. \cite{Yang}). In \cite{Zheng2}, we give a
definition of filling disks of holomorphic curves and prove their
existence.

Motivated by the filling disks, we ask if there exists a sequence of
disjoint disks whose union is a five-value uniqueness set. The main
purpose of this paper is to study the problem for holomorphic
curves, while a meromorphic function is considered as a holomorphic
curve. We will give conditions which confirm the existence of such a
disk sequence.

Let $\mathcal{P}^n(\mathbb{C})$ be the $n$-dimensional complex
projective space, that is,
$\mathcal{P}^n(\mathbb{C})=\mathbb{C}^{n+1}\setminus\{0\}/\sim,$
where $\sim$ is the equivalent relation defined by that
$(a_0,a_1,...,a_n)\sim (b_0,b_1,...,b_n)$ if and only if
$(a_0,a_1,...,a_n)=\lambda(b_0,b_1,...,b_n)$ for some
$\lambda\in\mathbb{C}\setminus\{0\}.$ We write $[a_0:a_1:...:a_n]$
for the equivalent class of $(a_0,a_1,...,a_n)$.

A map $f:\mathbb{C}\rightarrow \mathcal{P}^n(\mathbb{C})$ is called
a holomorphic curve on the complex plane $\mathbb{C}$, if we can
write $f=[f_0:f_1:...:f_n]$ where every $f_j$ is an entire function
and they have no common zeros on $\mathbb{C}$ and
${\bf{f}}=(f_0,f_1,...,f_n)$ is called a reduced representation of
$f$. In particular, for $n=1$, $f=[f_0:f_1]$ is a meromorphic function, and denote by $\nu_f$
the divisor defined by the zeros of entire function $f_0$ and $\nu^{\infty}_f$ the divisor
defined by the zeros of entire function $f_1$.

Let $f$ be a holomorphic curve on $\mathbb{C}$ with a reduced
representation ${\bf f}=(f_0,f_1,...,f_n)$. Set
$$v_f(z)=\bigvee_{j=0}^n\log |f_j(z)|.$$
The Cartan characteristic of $f$ is
$$T(r,f)=\frac{1}{2\pi}\int_0^{2\pi}v_f(re^{i\theta}){\rm d}\theta- v_f(0).$$
Since $v_f(z)$ is subharmonic, $T(r,f)$ is a positive logarithmic
convex increasing real-value function. If for some $f_i\not\equiv
0$, at least one of $f_j/f_i (j=0,1,2,...,n)$ is transcendental,
then $T(r,f)/\log r\to\infty (r\to\infty)$.

The order and lower order of a positive non-decreasing real-value
function $T(r)$ are respectively defined by
$$\lambda=\limsup\limits_{r\to\infty}\frac{\log T(r)}{\log r},\
\tau=\liminf\limits_{r\to\infty}\frac{\log T(r)}{\log r}.$$ Then the
order and lower order of a holomorphic curve $f$ on the complex
plane are respectively the order and lower order of $T(r,f)$,
denoted by $\lambda(f)$ and $\tau(f)$.

A hyperplane $H$ in $\mathcal{P}^n(\mathbb{C})$ is
$$H=\left\{[x_0:x_1:...:x_n]:\sum_{k=0}^na_kx_k=0\right\},$$
where $(a_0,a_1,...,a_n)\in\mathbb{C}^{n+1}\setminus\{0\}$.
Obviously, $H$ is completely determined by $[a_0:a_1:...:a_n]$.
Sometimes, we call the non-zero vector ${\bf{a}}=(a_0,a_1,...,a_n)$
as a hyperplane. And for a hyperplane ${\bf a}=(a_0,a_1,...,a_n)$,
write $\langle{\bf f,a}\rangle:=a_0f_0+a_1f_1+...+a_nf_n$ and we always assume
that $\langle{\bf f, a}\rangle\not\equiv 0$ when it appears. The divisor $\nu_{\langle{\bf f, a}\rangle}$ can
be regarded as a map from $\mathbb{C}$ to $\mathbb{Z}$ such that, for each $z\in \mathbb{C}$,
 $\nu_{\langle{\bf f, a}\rangle}(z)$ is the intersection multiplicity of the images of
$f$ and $H_j$ at $f(z)$.

Let ${\bf a}_1,{\bf a}_2,...,{\bf a}_q$ be $q$ hyperplanes. They are called to be
in general position if every $n+1$ members of ${\bf a}_1,{\bf a}_2,...,{\bf a}_q$
are linearly independent, that is, if no $n+1$ members of
${\bf a}_1,{\bf a}_2,...,{\bf a}_q$ are linearly dependent. A holomorphic curve
is linearly non-degenerated if its image cannot be contained in a
hyperplane.

We introduce the following concept.

\begin{defi}\ A subset $X$ of $\mathbb{C}$ is called
$(p,s;d)$-propagation set of $q$ holomorphic curves $f_i$ with a
reduced representation ${\bf f}_i\ (i=1,2,...,q)$ with $2\leq s\leq
q$ and $d\ge 1$, if the following statement holds:\ given any $p$ hyperplanes
$H_j (j=1,2,...,p)$ in general position satisfying
$$\min\{\nu_{\langle{{\bf f}_1, {\bf a}_j}\rangle}(z),d\}=
\min\{\nu_{\langle{{\bf f}_2, {\bf a}_j}\rangle}(z),d\}=...=
\min\{\nu_{\langle{{\bf f}_q, {\bf a}_j}\rangle}(z),d\},\ \forall z\in X,$$
for $j=1,2,...,p$ and $f_1^{-1}(H_i\cap H_j)\cap X=\emptyset$ for $i\not=j$, if for any $1\leq
j_1<j_2<...<j_s\leq q$,
\begin{equation}\label{1.1}{\bf f}_{j_1}(z)\wedge{\bf
f}_{j_2}(z)\wedge\cdots\wedge{\bf f}_{j_s}(z)=0,\ \forall\ z\in
\bigcup_{j=1}^pf_1^{-1}(H_j)\cap X,\end{equation} then we have
\begin{equation}\label{11}{\bf f}_{1}(z)\wedge{\bf
f}_{2}(z)\wedge\cdots\wedge{\bf f}_{q}(z)\equiv 0,\ \forall\ z\in
\mathbb{C},\end{equation} where $\wedge$ means the exterior product.
\end{defi}

(For simplicity, we write $(p,s)$-propagation set for $(p,s;1)$-propagation set. In this case,
$\min\{\nu_{\langle{{\bf f}_1, {\bf a}_j}\rangle}(z),d\}=
\min\{\nu_{\langle{{\bf f}_2, {\bf a}_j}\rangle}(z),d\}=...=
\min\{\nu_{\langle{{\bf f}_q, {\bf a}_j}\rangle}(z),d\}$ on $X$ is equivalent to $
f^{-1}_1(H_j)\cap X=f^{-1}_2(H_j)\cap X=...=f^{-1}_q(H_j)\cap X$.)

Since $\mathcal{P}^1(\mathbb{C})$ is the Riemann sphere, a
holomorphic curve $f: \mathbb{C}\to \mathcal{P}^1(\mathbb{C})$ is a
meromorphic function and a hyperplane is a value on
$\hat{\mathbb{C}}$. Let $f$ and $g$ be two meromorphic functions.
Then $A:=f^{-1}(a)\cap X=g^{-1}(a)\cap X$ is ${\bf f}\wedge {\bf
g}=0, \forall\ z\in A.$ A subset $X$ of $\mathbb{C}$ is a five-value
uniqueness set of two meromorphic functions if and only if it is a
$(5,2)$-propagation set of them. This is because ${\bf f}\wedge {\bf
g}=0,\ z\in\mathbb{C},$ is equivalent to that $f\equiv g$ and that
hyperplanes are in general position is equivalent to that the
corresponding values are distinct.

In 1989, Stoll \cite{Stoll} proved the following, which is stated in
view of propagation set for our convenience.

\

\noindent {\sl {\bf Theorem A.}\ $\mathbb{C}$ is a
$(p,s)$-propagation set of $q$ linearly non-degenerate holomorphic
curves, if $p>\frac{qn}{q-s+1}+n+1$.}

\

Note that $p>\frac{qn}{q-s+1}+n+1$ is equivalent to that
$q>(s-1)\frac{p-n-1}{p-2n-1}$ and $p>2n+1$. From Stoll's Theorem
{\bf A} with $q=s=2$ and $p=3n+2$, we naturally deduce the result:
{\sl Let $f$ and $g$ be two linearly non-degenerate holomorphic
curves and let $\{H_j\}_{j=1}^p$ be $p(>3n+1)$ hyperplanes in
general position. Assume that

i)\ $f^{-1}(H_j)=g^{-1}(H_j), j=1,2,...,p;$

ii)\ $f=g$ at $z\in\bigcup_{j=1}^pf^{-1}(H_j)$.

\noindent Then $f\equiv g$ on $\mathbb{C}$.}

This result is due to Smiley \cite{Smiley}. However,  the result is
proved in \cite{ChenYan} for $q=s=2$, $p=2n+3$ and $d=1$ and in \cite{Si2}
for $q=s=2$, $p=2n+2$ and $d=n+1$ when $n\geq 2$, which is not included in the Stoll's
Theorem {\bf A}. They are of independent significance and the best
results at present. These results are the improvements of the
Fujimoto's results in \cite{Fujimoto}, \cite{Fujimoto2} and
\cite{Fujimoto3}. Uniqueness problem of holomorphic curves attracts
many interests. For references, we list some of the papers about
this topic such as \cite{Ji}, \cite{Fujimoto1}, \cite{Ru1},
\cite{ChenRu}, \cite{DethloffTan} and \cite{DulockRu}.

In this paper, we discuss the possibility of that a precise subset
of $\mathbb{C}$ is a propagation set.

Firstly, we consider the uniqueness problem ignoring multiplicities.

By $B(z,r)$ we denote the disk
centered at $z$ with radius $r$ and by $B(r)$ the disk centered at
the origin with radius $r$. For a sequence of complex numbers
$\{z_m\}$ and a sequence of positive numbers $\{\varepsilon_m\}$,
set $B(\{z_m\};\{\varepsilon_m\}):=\bigcup_{m=1}^\infty
B(z_m,\varepsilon_m|z_m|).$

We establish the following.

\begin{thm}\label{thm1.1}\ Let $f_i(z)\ (i=1,2,...,q; q\geq 2)$ be linearly
non-degenerate transcendental holomorphic curves such that
$T(r):=\sum_{i=1}^qT(r,f_i)$ is of infinite order. Let $\{\varepsilon_m\}$
be a sequence of positive numbers such that $\varepsilon_m\to 0$ as
$m\to\infty$. Then there exists a sequence of complex numbers
$\{z_m\}$ with $z_m\to\infty$ as $m\to\infty$ such that
$B(\{z_m\};\{\varepsilon_m\})$ is a $(p,s)$-propagation set of
$f_i(z)\ (i=1,2,...,q)$, if $p>\frac{qn}{q-s+1}+n+1$.
\end{thm}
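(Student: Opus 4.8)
The plan is to localize Stoll's Theorem A to a sequence of filling disks, whose existence is the only place where the infinite-order hypothesis is used. Recall that Stoll's proof that $\mathbb{C}$ is a $(p,s)$-propagation set under $p>\frac{qn}{q-s+1}+n+1$ rests on two global Nevanlinna-theoretic inputs applied to each linearly non-degenerate $f_i$ and to the hyperplanes $H_1,\dots,H_p$ in general position: Cartan's second main theorem, which yields a lower bound for the summed truncated counting functions $\sum_{j=1}^p N^{[n]}(r,f_i,H_j)$ in terms of $T(r,f_i)$, and the first main theorem, which bounds above the zero-counting of each $s$-fold wedge curve $\mathbf f_{j_1}\wedge\cdots\wedge\mathbf f_{j_s}$ by $\sum_{k}T(r,f_{j_k})+O(1)$. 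The hypothesis \eqref{1.1} forces each $s$-fold wedge to vanish on $\bigcup_{j}f_1^{-1}(H_j)\cap X$; the sharing condition identifies these preimages across all indices $i$; and the disjointness $f_1^{-1}(H_i\cap H_j)\cap X=\emptyset$ guarantees that preimages of distinct hyperplanes contribute distinct zeros. Combining these facts through Stoll's combinatorial bookkeeping over the $\binom{q}{s}$ subsets and the $p$ hyperplanes produces, once $(q-s+1)(p-n-1)>qn$, a contradiction unless some $s$-fold wedge, and hence $\mathbf f_1\wedge\cdots\wedge\mathbf f_q$, vanishes identically. My aim is to reproduce this exact chain of inequalities with every global characteristic, proximity, and counting function replaced by a localized version supported on a suitable disk sequence.

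First I would produce the filling disks. Since $T(r)=\sum_i T(r,f_i)$ has infinite order, I invoke the filling-disk construction for holomorphic curves from \cite{Zheng2}: for the prescribed null sequence $\{\eps_m\}$ there is a sequence $z_m\to\infty$ so that, writing $D_m=B(z_m,\eps_m|z_m|)$, a localized Cartan second main theorem holds on $\bigcup_m D_m$. Concretely this furnishes localized characteristics $T_m(f_i)$, proximity functions, and truncated counting functions $N_m^{[n]}(f_i,H_j)$ computed over the $D_m$, together with an inequality of the form $(p-n-1)\,T_m(f_i)\le\sum_{j}N_m^{[n]}(f_i,H_j)+S_m(f_i)$ in which the localized remainder $S_m(f_i)$ is negligible relative to $T_m(f_i)$. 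The defining virtue of filling disks is that the local growth captures the global growth, so a nontrivial holomorphic curve cannot have its localized characteristic absorbed into the remainder; this is exactly what makes the subsequent contradiction effective.

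With these localized inequalities in hand I would run Stoll's argument verbatim on $\bigcup_m D_m$. Assume, for contradiction, that $\mathbf f_1\wedge\cdots\wedge\mathbf f_q\not\equiv0$; then no $s$-fold wedge $F_I:=\mathbf f_{j_1}\wedge\cdots\wedge\mathbf f_{j_s}$ is identically zero, so each is a genuine holomorphic curve into $\bigwedge^s\mathbb C^{n+1}$. By \eqref{1.1} together with the disjointness hypothesis, each $F_I$ vanishes at every point of $\bigcup_j f_1^{-1}(H_j)\cap D_m$, and these distinct points feed the localized zero-divisor of $F_I$. The localized first main theorem then bounds $\sum_{j}\bar N_m(f_1,H_j)$ above by $\sum_{k\in I}T_m(f_{j_k})+o(1)$, while the localized Cartan theorem bounds the same summed counting from below by a fixed multiple of $T_m(f_i)$ for each $i$. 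Performing Stoll's summation over the subsets $I$ and the hyperplanes, and using the sharing condition to identify the counting functions of the different curves, yields a localized inequality that is violated for large $m$ precisely when $(q-s+1)(p-n-1)>qn$, i.e. when $p>\frac{qn}{q-s+1}+n+1$. This contradiction forces some $F_I\equiv0$, hence $\mathbf f_1\wedge\cdots\wedge\mathbf f_q\equiv0$ on $\mathbb C$, which is \eqref{11}; therefore $B(\{z_m\};\{\eps_m\})$ is a $(p,s)$-propagation set.

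The main obstacle is the second step: establishing the localized second main theorem on disks of the prescribed radii $\eps_m|z_m|$ with a remainder that is genuinely $o(T_m(f_i))$, and doing so simultaneously for the finitely many curves $f_i$ and all $\binom{q}{s}$ wedge curves, so that a single disk sequence $\{z_m\}$ serves every estimate at once. The delicate point, absent from the global theory, is controlling the logarithmic-derivative (Wronskian) contribution of Cartan's theorem over a small disk far from the origin; the infinite-order hypothesis is exactly what tames it, since $T(r)$ then outgrows every power of $r$, rendering the $\log r$ and exceptional-set corrections asymptotically negligible on $D_m$. (For curves of finite positive order these corrections cannot be beaten on disks, which is why the general case is forced onto annuli.) Carrying this localization through uniformly, and checking that the radii may indeed be prescribed to shrink like $\eps_m|z_m|$ while still capturing the growth of each curve, is the technical heart of the argument.
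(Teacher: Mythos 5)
Your high-level skeleton---localize Stoll's counting argument (the wedge first main theorem of Lemma \ref{lem2.2} plus Lemma \ref{lem2.3}, played against a second main theorem, with the infinite-order hypothesis creating room for the error terms)---does match the paper's strategy, but the step you yourself call ``the technical heart'' is a genuine gap rather than a deferral: you assume that the filling-disk construction of \cite{Zheng2} furnishes a localized Cartan second main theorem on the shrinking disks $D_m=B(z_m,\varepsilon_m|z_m|)$, with intrinsic disk characteristics $T_m(f_i)$ and remainder $o(T_m(f_i))$. No such theorem is available, and filling disks do not supply it: they are a Borel-type statement guaranteeing an abundance of $H$-preimages in the disks (a lower bound on counting functions, with at most two exceptional hyperplanes), not an inequality with a main term and a negligible error on an individual small disk far from the origin, where the Wronskian/logarithmic-derivative contribution cannot be beaten. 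This is precisely why the paper never works on disks directly. Instead it applies the angular second main theorem of \cite{Zheng1} (Theorem \ref{thm2.1}, with the explicit error bound $R_{\alpha,\beta}(r,f)\le K\omega(\log^+T(r,f)+\log^+r+1)$) on sectors $\mathcal{Z}_{\delta_m}$, uses the inclusion $\mathcal{Z}_{\delta_m}\cap A(\kappa_m^2r_m,r_m)\subset B(z_m,\varepsilon_m|z_m|)$, and then must control a second issue your sketch ignores entirely: since the hypotheses hold only on the disks, the angular counting functions up to radius $r_m$ pick up contributions from $B(\kappa_m^2 r_m)$ where nothing is shared. These are absorbed via Lemma \ref{lem2.1} (comparing $C$ with $N$) and the regularity sequence of Lemma \ref{lem2.4} ($T(t)/t^{s_m}\le e\,T(r_m)/r_m^{s_m}$ for $t\le r_m$), with the parameters balanced so that $s_m\varepsilon_m^2\to\infty$ and $\omega_m=\pi/(2\delta_m)$, forcing $\kappa_m^{s_m-\omega_m}$, $e\omega_m^2/(s_m-\omega_m)$ and $\omega_m r_m^{\omega_m-s_m/2}$ all to vanish.

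There is a second, related omission: the centers $z_m$ are not taken from \cite{Zheng2} but constructed so that the contradiction can actually close. The paper applies the global Cartan second main theorem to produce one hyperplane $H$ with $\sum_i N(r_m,H,f_i)\sim T(r_m)$, pigeonholes over the $\left[4\pi/\varepsilon_m\right]+2$ sectors to find a direction $\theta_{m,k_m}$ whose half-sector carries at least $\frac{\varepsilon_m}{4\pi+1}T(r_m)$ of this counting function, and then uses the lower bound (\ref{3.4}), $S_{\mathcal{Z}_\delta}(r,f_i)\ge \frac{\sqrt{2}\,\omega}{2}N_{\mathcal{Z}_{\delta/2}}(r,H,f_i)/r^\omega+O(1)$, to conclude that the angular characteristic on the chosen sector is a definite proportion of $T(r_m)/r_m^{\omega_m}$. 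Without this quantitative selection---or the nonexistent disk second main theorem---your localized inequality has no lower bound of the correct size to violate, so ``running Stoll verbatim'' cannot produce the contradiction; in the paper it is exactly this lower bound against which the error terms in (\ref{3.6}) are shown to be $o(1)$, yielding $Q=0$ and the desired contradiction.
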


For $q=s=2$ and $p=2n+3$, Theorem \ref{thm1.1} can be improved as follows
which corresponds to the result of \cite{ChenYan}.

\begin{thm}\label{thm1.4}\ Let $f$ and $g$ be two linearly
non-degenerate transcendental holomorphic curves such that $T(r):=T(r,f)+T(r,g)$
 is of infinite order. Let $\{\varepsilon_m\}$ be
a sequence of positive numbers such that $\varepsilon_m\to 0$ as
$m\to\infty$. Then there exists a sequence of complex numbers
$\{z_m\}$ with $z_m\to\infty$ as $m\to\infty$ such that
$B(\{z_m\};\{\varepsilon_m\})$ is a $(2n+3,2)$-propagation set of
$f$ and $g$.
\end{thm}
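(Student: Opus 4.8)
The plan is to run the proof of Theorem \ref{thm1.1} essentially unchanged, but at the one place where a global uniqueness theorem enters I would replace Stoll's Theorem {\bf A} by the sharp theorem of Chen and Yan \cite{ChenYan}, whose bound $p=2n+3$ is exactly what is claimed here for $q=s=2$. Since $T(r)=T(r,f)+T(r,g)$ is of infinite order, the first step is to produce, for the prescribed sequence $\{\eps_m\}$, a sequence $\{z_m\}$ with $z_m\to\infty$ such that on the disks $B(z_m,\eps_m|z_m|)$ a Second Main Theorem for linearly non-degenerate holomorphic curves holds with all Nevanlinna functionals restricted to $X=B(\{z_m\};\{\eps_m\})$. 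This is precisely the filling-disk construction already used for Theorem \ref{thm1.1}: it yields a restricted characteristic $\tilde T(r)$ and restricted counting functions $\tilde N(r,\cdot)$ with $\tilde T(r)\sim T(r)$ along a suitable sequence $r\to\infty$ and with all error terms of size $o(\tilde T(r))$.

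Arguing by contradiction, I would assume ${\bf f}\wedge{\bf g}\not\equiv 0$ while the hypotheses of a $(2n+3,2)$-propagation set hold on $X$: the curves share $2n+3$ hyperplanes $H_1,\dots,H_{2n+3}$ in general position on $X$, one has $f^{-1}(H_i\cap H_j)\cap X=\emptyset$ for $i\ne j$, and ${\bf f}(z)\wedge{\bf g}(z)=0$ at every $z\in\bigcup_j f^{-1}(H_j)\cap X$. Form the holomorphic curve $F={\bf f}\wedge{\bf g}$; each coincidence point is a common zero of the entries of $F$, and because the hyperplane preimages are mutually disjoint on $X$ these zeros are counted by at least $\sum_{j=1}^{2n+3}\tilde N(r,H_j)$. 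A first-main-theorem computation for $F$ gives the upper bound $N(r,0,F)\le T(r,f)+T(r,g)+O(1)$, while the restricted Second Main Theorem applied to $f$ and to $g$, together with $\tilde T\sim T$, bounds the coincidence divisor from below by a multiple of $T(r)$; comparing the two already produces a contradiction when $n=1$.

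For general $n$ the crude Second-Main-Theorem count is too weak, and here I would import the refined part of the Chen--Yan argument. Rather than estimate $\sum_j\tilde N(r,H_j)$ through the Second Main Theorem alone, one studies the ratios $\langle {\bf f},{\bf a}_i\rangle/\langle {\bf f},{\bf a}_j\rangle$ and $\langle {\bf g},{\bf a}_i\rangle/\langle {\bf g},{\bf a}_j\rangle$, exploits the shared zero sets on $X$ together with the coincidence ${\bf f}={\bf g}$ on the preimages to produce the auxiliary identities of \cite{ChenYan}, and thereby forces the coincidence divisor to exceed $T(r,F)$ along the filling disks. This contradiction yields ${\bf f}\wedge{\bf g}\equiv 0$ on $\mathbb{C}$, that is $f\equiv g$, which is exactly the required propagation.

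The main obstacle is the localization of the Chen--Yan estimates to the filling disks. Their argument is stated on all of $\mathbb{C}$ and rests on the logarithmic-derivative lemma, the First and Second Main Theorems, and several auxiliary-function estimates; each must be replaced by its restricted analogue with error $o(\tilde T(r))$. This forces the disks to be chosen not merely large but so that $\tilde T(r)$ reproduces $T(r)$ to leading order along the relevant radii. Verifying that the exceptional set arising from the restricted logarithmic-derivative lemma does not swallow the sequence $\{z_m\}$, and that the numerical constants in the Chen--Yan inequalities survive the restriction unchanged, is the delicate technical core on which the whole proof turns.
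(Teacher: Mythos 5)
Your overall strategy is indeed the paper's: retain the angular-domain localization machinery from the proof of Theorem \ref{thm1.1} and replace the Stoll/wedge-product count (Lemma \ref{lem2.3}) by the Chen--Yan technique to reach $p=2n+3$. But the step you defer as ``the delicate technical core'' is the entire mathematical content of the paper's Section 4, namely Lemma \ref{lem4.2}, and your plan gives no indication of how it goes. Concretely: since $f\not\equiv g$, the ratios $h_i=\langle{\bf f},{\bf a}_i\rangle/\langle{\bf g},{\bf a}_i\rangle$ fall into groups of identically equal functions, each of size at most $n$; the shift $\sigma(i)=i+n$ (mod $p$), with $|\sigma(i)-i|\ge n$, then guarantees $P_i=\langle{\bf f},{\bf a}_i\rangle\langle{\bf g},{\bf a}_{\sigma(i)}\rangle-\langle{\bf g},{\bf a}_i\rangle\langle{\bf f},{\bf a}_{\sigma(i)}\rangle\not\equiv 0$. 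The sharing hypotheses on the disks give a pointwise lower bound for $\nu_{P_i}$ involving $\min\{\nu_{\langle{\bf f},{\bf a}_j\rangle},\nu_{\langle{\bf g},{\bf a}_j\rangle}\}$ at $j=i,\sigma(i)$ plus truncated terms at the other indices; combined with $\min\{a,b\}\ge\min\{a,n\}+\min\{b,n\}-n$, the Jensen-type bound $C_{\mathcal{Z}_\delta}(\kappa^2r,r;0,P_i)\le S_{\mathcal{Z}_\delta}(r,f)+S_{\mathcal{Z}_\delta}(r,g)+O(1)$, summation over $i$ (using bijectivity of $\sigma$), and Theorem \ref{thm2.1}, this yields the localized inequality $\left(1+\frac{n+2}{2n}\right)\sum_{l}S_{\mathcal{Z}_\delta}(r,f_l)\le$ error terms. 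Note also that your flagged obstacle is partly a red herring: no separately localized logarithmic-derivative lemma is needed, because the only analytic inputs are the angular second main theorem (whose error term \eqref{2.3} is already controlled) and the first-main-theorem bound for the subharmonic function $\log|P_i|$. What is genuinely needed --- the grouping, the map $\sigma$, the functions $P_i$, and the truncation bookkeeping, all rewritten in the $C_{\mathcal{Z}_\delta}(\kappa^2 r,r;\cdot)$ counting functions --- is exactly what your appeal to ``the auxiliary identities of \cite{ChenYan}'' leaves unsupplied; citing their global theorem can never work, since the hypotheses hold only on $X$.

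A second genuine defect is your bridging claim that the filling-disk construction produces a restricted characteristic with $\tilde T(r)\sim T(r)$ and all errors $o(\tilde T(r))$; this is false, and your $n=1$ ``crude contradiction'' rests on it. What the proof of Theorem \ref{thm1.1} actually provides is far weaker: Cartan's second main theorem supplies a hyperplane $H$ with $\sum_i N(r_m,H,f_i)\sim T(r_m)$, and pigeonholing over the roughly $4\pi/\varepsilon_m$ sectors yields one sector whose counting function captures only an $\frac{\varepsilon_m}{4\pi+1}$-fraction of $T(r_m)$ --- a fraction tending to $0$. The contradiction is then extracted at the normalized scale $T(r_m)/r_m^{\omega_m}$, and it survives only because $\omega_m\varepsilon_m=\pi$ is constant (so the main coefficient $\frac{\sqrt 2}{10}Q$ stays bounded below) while the choices $s_m\varepsilon_m^2\to\infty$, via Lemma \ref{lem2.4}, make every error term $o(1)$ at that scale; this is the chain of estimates from \eqref{3.4} through \eqref{3.6}. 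So the correct completion of your plan is not ``restricted functionals asymptotic to $T$'' but a rerun of those asymptotics with Lemma \ref{lem4.2} substituted for the wedge-product estimate --- which is precisely how the paper finishes, in one line, after proving Lemma \ref{lem4.2}.
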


\begin{re}
(i) Theorems \ref{thm1.1} and \ref{thm1.4} are significant results in which the
propagation set can be a precise subset of $\mathbb{C}$. In fact, we
can choose $\{z_m\}$ such that $B(z_m,\varepsilon_m|z_m|)$ are
disjoint each other. When $n=1$, it shows that Nevanlinna's
five-value theorem is valid if $f$ and $g$ share five values on a
sequence of disjoint disks in $\mathbb{C}$ and if at least one of
$f$ and $g$ is of infinite order. In the proof of these two theorems (see Section 3 and Section 4), the value distribution of holomorphic
curves on angular domains established by the first author is
essentially used.

(ii) In these two theorems, $T(r)$ is assumed to
be of infinite order, i.e., at least one of $f_i (i=1,2,...,q)$ is
of infinite order. If $T(r)$ is of finite order with
the order $0<\lambda<+\infty$, in view of our method we can obtain
similar conclusion for $\{\varepsilon_m\}$ with
$\varepsilon_m>\frac{\pi}{2\lambda}$ and
$\varepsilon_m\to\frac{\pi}{2\lambda}(m\to\infty).$ But we leave the
proof for the finite order case to the reader. Here we just mention
that for the finite order case, we use the P\'olya peak sequence
(its definition and existence will be stated in Section 2) instead
of Lemma 2.4 to find the points $\{z_m\}$.
\end{re}

By $A(r,R)$ we denote the annulus of $\{z:\ r<|z|<R\}$. For a
sequence of positive numbers $\{r_m\}$ with $r_m>1$ and $\sigma>1$,
set $A(\{r_m\};\sigma):=\bigcup_{m=1}^\infty A(r_m,r_m^\sigma).$

\begin{thm}\label{thm1.2}\ Let $\{r_m\}$ be a
sequence of positive numbers tending to $\infty$. Then
$A(\{r_m\};\sigma)$ is a $(p,s)$-propagation set of any $q$ linearly
non-degenerate transcendental holomorphic curves $f_i(z)\
(i=1,2,...,q)$ if
$\left(1-\frac{1}{\sigma}\right)p>\frac{qn}{q-s+1}+n+1$.\end{thm}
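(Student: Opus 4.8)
The plan is to reduce the statement to Stoll's Theorem A by localizing its counting-function estimates to the prescribed annuli and absorbing the lost contribution through the logarithmic convexity of the Cartan characteristic. Assume to the contrary that $\mathbf f_1\wedge\cdots\wedge\mathbf f_q\not\equiv 0$ while all the hypotheses (general position, the multiplicity-sharing condition, and $f_1^{-1}(H_i\cap H_j)\cap X=\emptyset$) hold on $X=A(\{r_m\};\sigma)$. For each $m$ put $R_m=r_m^\sigma$ and record the elementary geometric fact that, within the disk $\{|z|\le R_m\}$, the complement of $X$ is contained in $\{|z|\le r_m\}$: the entire annulus $A(r_m,R_m)$ lies in $X$, so no point of $\{|z|\le R_m\}\setminus X$ has modulus exceeding $r_m$. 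This is precisely what makes the annular case more direct than Theorems \ref{thm1.1} and \ref{thm1.4}, where the angular-domain value distribution is needed to \emph{produce} the regions; here the relevant regions are given.

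Next I would run Stoll's argument at the radius $R_m$. The proof of Theorem A produces, from the vanishing of the $s$-fold wedge products on $\bigcup_{j=1}^p f_1^{-1}(H_j)$, an auxiliary function (a combination of Wronskian-type determinants built from the sections $\langle\mathbf f_i,\mathbf a_j\rangle$) whose divisor dominates the shared intersection multiplicities, together with a second main theorem that turns this into the numerical threshold $p>\frac{qn}{q-s+1}+n+1$. Since here the wedge vanishing is only assumed on $\bigcup_{j=1}^p f_1^{-1}(H_j)\cap X$, the zero estimate for the auxiliary function controls only the part of $\sum_j N(R_m,\langle\mathbf f_i,\mathbf a_j\rangle)$ supported on $X$; by the geometric remark the remaining part, supported on $\{|z|\le R_m\}\setminus X$, is bounded by $\sum_j N(r_m,\langle\mathbf f_i,\mathbf a_j\rangle)$, hence by a constant multiple of $\sum_i T(r_m,f_i)$ through the first main theorem.

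The factor $1-\tfrac1\sigma$ then enters through logarithmic convexity. Since each $T(r,f_i)$ is convex and increasing in $\log r$, comparing its values at $\log r_m$ and $\log R_m=\sigma\log r_m$ (with reference point $\log 1=0$) yields
\[
T(r_m,f_i)\le\frac1\sigma\,T(R_m,f_i)+O(1),
\]
so the lost, non-annular contribution is at most a $\tfrac1\sigma$-fraction of the characteristic at $R_m$. Feeding this into Stoll's second main theorem inequality has the effect of replacing the coefficient $p$ of the counting terms by $\left(1-\tfrac1\sigma\right)p$, so that the threshold $p>\frac{qn}{q-s+1}+n+1$ of Theorem A becomes $\left(1-\tfrac1\sigma\right)p>\frac{qn}{q-s+1}+n+1$. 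Under this hypothesis the resulting inequality forces $T(R_m)\le S(R_m)$ along $R_m\to\infty$, which is impossible for transcendental curves, and the contradiction gives $\mathbf f_1\wedge\cdots\wedge\mathbf f_q\equiv 0$. (As a sanity check, for $n=1$, $q=s=2$ this reproduces exactly the condition $\sigma>5$ for the five-value theorem on $A(\{r_m\};\sigma)$.)

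I expect the main obstacle to be the bookkeeping in the middle step: isolating, inside Stoll's auxiliary-function and second-main-theorem machinery, exactly the portion of each counting function supported on $X$, and verifying that the complementary portion is genuinely dominated by $\sum_j N(r_m,\cdot)$ rather than by the full counting up to $R_m$. One must also check that the small terms $S(R_m,f_i)$ stay $o(T(R_m))$ along $R_m\to\infty$ in spite of the usual exceptional-set ambiguity, which the margin afforded by $\sigma>1$ should secure.
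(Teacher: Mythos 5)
Your proposal is correct and follows essentially the same route as the paper's proof: argue by contradiction, apply Cartan's second main theorem at the outer radius $r^{\sigma}$, control the zeros lying in the annulus by the localized wedge-product multiplicity estimate (Lemma \ref{lem2.3}) together with the first main theorem for wedge products (Lemma \ref{lem2.2}), bound the zeros in the inner disk trivially by $T(2r,f_i)$, and use logarithmic convexity, $T(r)\le\frac{1}{\sigma}T(r^{\sigma})+O(1)$, to arrive at the threshold $\left(1-\frac{1}{\sigma}\right)p>\frac{qn}{q-s+1}+n+1$. The ``bookkeeping'' obstacle you flag is exactly resolved by the paper's Lemma \ref{lem2.3}, which is already stated for annular pieces $A(r,R)\cap\Omega$, and the exceptional-set ambiguity is handled in the paper by choosing $r'_m\in[r_m/2,r_m]$ outside the finite-measure exceptional set, matching your expectation that the margin from $\sigma>1$ suffices.
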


For $q=s=2$ and $p=2n+3$, we have the following.

\begin{thm}\label{thm1.6}\ Let $\{r_m\}$ be a
sequence of positive numbers tending to $\infty$. Then
$A(\{r_m\};\sigma)$ is a $(2n+3,2)$-propagation set of any two linearly
non-degenerate transcendental holomorphic curves $f$ and $g$ if
${\sigma}>\frac{5n(2n+3)}{3n+2}$.
\end{thm}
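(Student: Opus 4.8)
The plan is to argue by contradiction and to transplant the sharp $\mathbb{C}$-uniqueness theorem of \cite{ChenYan} (the case $q=s=2$, $p=2n+3$, $d=1$) to the annular set $A(\{r_m\};\sigma)$, carefully tracking the loss caused by having the sharing hypotheses only on the annuli. Thus I assume ${\bf f}\wedge{\bf g}\not\equiv 0$ on $\mathbb{C}$ and try to contradict the hypotheses once $\sigma$ is large. It is worth noting first that Theorem \ref{thm1.2} with $q=s=2$, $p=2n+3$ gives the \emph{empty} condition $(1-\frac1\sigma)(2n+3)>3n+1$ for $n\ge 2$, so the whole point is to replace the Stoll-type bound by the finer ChenYan bound inside the annular estimate.

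The engine is Cartan's Second Main Theorem for the linearly non-degenerate curves $f$ and $g$ (their non-degeneracy and the general position of the $2n+3$ hyperplanes are exactly what make it applicable), combined with the cross-ratio type auxiliary meromorphic functions built from the coordinates $\langle{\bf f},{\bf a}_\ell\rangle$ and $\langle{\bf g},{\bf a}_\ell\rangle$, as in the Smiley--Fujimoto--ChenYan scheme. The hypothesis \eqref{1.1} with $s=2$ says $f=g$ on $\bigcup_j f^{-1}(H_j)\cap X$, which forces each such auxiliary function to take the value $1$ at every point of $\bigcup_j f^{-1}(H_j)\cap A(r_m,r_m^\sigma)$; the IM-sharing $d=1$ matches the zero sets of $\langle{\bf f},{\bf a}_j\rangle$ and $\langle{\bf g},{\bf a}_j\rangle$ on the annuli; and $f_1^{-1}(H_i\cap H_j)\cap X=\emptyset$ guarantees that these shared zeros are hit one hyperplane at a time, keeping the counting clean.

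The decisive step is the comparison between the counting functions controlled by the sharing hypotheses, which live only on the annuli, and the full counting functions appearing in the Second Main Theorem. Here I would evaluate everything at (a radius just beyond) the outer radius $r_m^\sigma$ of a \emph{single} annulus, using only the sharing on $A(r_m,r_m^\sigma)$; all the other annuli and the gaps between them lie inside $B(r_m)$, so sparseness of $\{r_m\}$ costs nothing and any $r_m\to\infty$ works. The key elementary estimate is that for a nondecreasing counting function $\bar N(r_m)\le\frac1\sigma\,\bar N(r_m^\sigma)$ (since $\bar N(r_m^\sigma)\ge\bar N(r_m)+\bar n(r_m)(\sigma-1)\log r_m\ge\sigma\,\bar N(r_m)$), whence the annular part obeys $\bar N(r_m^\sigma)-\bar N(r_m)\ge(1-\frac1\sigma)\bar N(r_m^\sigma)$; this is precisely the factor $1-\frac1\sigma$ of Theorem \ref{thm1.2}. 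Propagating this factor through the ChenYan chain of inequalities (which, unlike the Stoll argument, is not of the plain form ``$p>$ threshold''), and inserting the truncation level $n$ forced by $d=1$ together with the combinatorics attached to $p=2n+3$ and the Stoll value $3n+2$, the argument closes exactly when $\sigma>\frac{5n(2n+3)}{3n+2}$; as a sanity check, for $n=1$ this reads $\sigma>5$, the annular five-value theorem.

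I expect two obstacles, the second being the main one. First, the Second Main Theorem and the logarithmic-derivative lemma carry an error term $S(r)=o(T(r))$ only off an exceptional set of finite measure, and since the $r_m$ are prescribed one must choose the evaluation radii near $r_m^\sigma$ so as to avoid that set while still enclosing the full sharing annulus $A(r_m,r_m^\sigma)$; without an order hypothesis this calls for the radius selection above rather than the P\'olya-peak device used in the finite-order case. Second, and more delicately, obtaining the \emph{sharp} constant $\frac{5n(2n+3)}{3n+2}$ requires that the $\frac1\sigma$-proportion bound be fed into exactly the combination $T(r,f)+T(r,g)$ produced by the uniqueness chain, with no wasteful term-by-term slack; reproducing the ChenYan efficiency in the annular setting, rather than merely the Stoll efficiency, is where the real work lies.
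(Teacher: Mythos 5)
Your skeleton does match the paper's: argue by contradiction from $f\not\equiv g$, build the Chen--Yan auxiliary functions $P_i=\langle{\bf f},{\bf a}_i\rangle\langle{\bf g},{\bf a}_{\sigma(i)}\rangle-\langle{\bf g},{\bf a}_i\rangle\langle{\bf f},{\bf a}_{\sigma(i)}\rangle\not\equiv 0$ via the grouping/shift construction, convert the sharing hypotheses on a single annulus into lower bounds for $N(\cdot,0,P_i)$ by annular differences of truncated counting functions, close with Cartan's second main theorem, and select radii to dodge the exceptional set (though the paper does this from the inside: it takes $r=r_m'\in[r_m/2,r_m]$ and works on $A(2r,r^\sigma)\subseteq A(r_m,r_m^\sigma)$, evaluating at $r^\sigma\le r_m^\sigma$; your choice ``just beyond'' $r_m^\sigma$ would admit uncontrolled zeros in $A(r_m^\sigma,\rho)$ into the counts). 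The genuine gap is exactly the step you flag as ``where the real work lies'': you never derive the constant, and the one concrete mechanism you do specify --- the termwise proportion bound $\bar N(r_m^\sigma)-\bar N(r_m)\ge\left(1-\frac{1}{\sigma}\right)\bar N(r_m^\sigma)$ fed into the Chen--Yan chain --- is not the paper's mechanism and does not ``close exactly'' at $\frac{5n(2n+3)}{3n+2}$. Pushed through the chain it would give the condition $\left(1-\frac{1}{\sigma}\right)\left(2+\frac{1}{2n}\right)(n+2)>2n+3$, i.e.\ $\sigma>\frac{(4n+1)(n+2)}{3n+2}$, which for $n=1$ reads $\sigma>3$; your own sanity check ``$n=1$ gives $\sigma>5$'' is therefore inconsistent with your stated mechanism, which shows the decisive computation was never actually performed.

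For comparison, here is how the paper produces the constant. Summing the $P_i$-inequalities over $1\le i\le 2n+3$ (bijectivity of the shift map) and using $N^{1)}\ge\frac{1}{n}N^{n)}$ gives $\left(2+\frac{1}{2n}\right)\sum_{j,l}N^{n)}(r^\sigma,H_j,f_l)\le(2n+3)T(r^\sigma)+\frac{1}{2}\sum_{j,l}N^{1)}(2r,H_j,f_l)+2\sum_{j,l}N^{n)}(2r,H_j,f_l)+O(1)$; the inner-disk terms are then bounded \emph{trivially} via the first main theorem by $\frac{5}{2}(2n+3)T(2r)$ --- this is where $5n(2n+3)$ is born --- and Cartan's second main theorem plus the logarithmic convexity estimate $T(r^\sigma)\ge\frac{\sigma\log r}{\log 2r}T(2r)+(1-\sigma)T(1)$ yield $\sigma\left(\frac{3}{2}+\frac{1}{n}\right)\le 5n+\frac{15}{2}$, i.e.\ $\sigma\le\frac{5n(2n+3)}{3n+2}$, the contradiction. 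Note that the factor $1-\frac{1}{\sigma}$ in Theorem \ref{thm1.2} also arises from this trivial-bound-plus-convexity route (there the inner-part coefficient happens to equal the main coefficient $p$, so it factors cleanly), not from your counting-proportion inequality. Ironically, your proportion inequality is itself correct (truncated counting functions are integrals of nondecreasing $n(t)/t$, so $\bar N(r_m)\le\bar n(r_m)\log r_m$ and the claimed bound follows), and if carried through carefully it appears to yield the \emph{better} threshold $\frac{(4n+1)(n+2)}{3n+2}$; but as submitted the proposal executes neither this route nor the paper's, so the theorem is not proved.
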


In particular, from Theorem \ref{thm1.2} or Theorem \ref{thm1.6} we deduce that
$A(\{r_m\};6)$ is a five-value uniqueness set of two meromorphic
functions. In fact, for $n=1, q=s=2, p=5$ and $\sigma=6$, we have
$$\left(1-\frac{1}{\sigma}\right)p=\frac{25}{6}>4=\frac{qn}{q-s+1}+n+1.$$
Then $A(\{r_m\};6)$ is a $(5,2)$-propagation set so that it is a
five-value uniqueness set.

Let us describe the signification of Theorem \ref{thm1.2}. Given
three positive integers $q, p$ and $s$ with $p>\frac{qn}{q-s+1}+n+1$
and $q> s-1\geq 1$, for a $\sigma>0$ such that
$\sigma>\frac{pq-p(s-1)}{(p-2n-1)q-(p-n-1)(s-1)}$, we can choose a
sequence of positive numbers $\{r_m\}$ such that $r_{m+1}>e^{r_m}$
and thus $A(\{r_m\},\sigma)$ is the union of disjoint annuli.
Theorem \ref{thm1.2} tells us that $A(\{r_m\};\sigma)$ is a
$(p,s)$-propagation set for any $q$ linearly non-degenerate
holomorphic curves.

\begin{cor}\ Let $\{r_m\}$ be an unbounded sequence of
positive numbers. Then two meromorphic functions coincide if they
share five values on $A(\{r_m\};6)$.
\end{cor}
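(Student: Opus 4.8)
The plan is to deduce the statement directly from Theorem \ref{thm1.2}, specializing the dictionary set up in Section \ref{intr} to meromorphic functions and repairing the slight gap between the hypothesis ``$\{r_m\}$ unbounded'' here and ``$r_m\to\infty$'' there. Recall that a meromorphic function is a holomorphic curve into $\mathcal P^1(\mathbb C)$, so that $n=1$; a value in $\hat{\mathbb C}$ is a hyperplane; five distinct values are five hyperplanes in general position; ``$f$ and $g$ share $a$ IM on $X$'' is exactly ${\bf f}\wedge{\bf g}=0$ on $f^{-1}(a)\cap X$; and ``${\bf f}\wedge{\bf g}\equiv 0$ on $\mathbb C$'' is equivalent to $f\equiv g$. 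Consequently, to prove the corollary it suffices to show that $A(\{r_m\};6)$ is a $(5,2)$-propagation set of $f$ and $g$.

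First I would pass to a subsequence. Since $\{r_m\}$ is unbounded, there is a subsequence $\{r_{m_k}\}$ with $r_{m_k}\to\infty$, and then $A(\{r_{m_k}\};6)\subseteq A(\{r_m\};6)$. Because $f$ and $g$ share the five values on the larger set, they share them on every subset, in particular on $A(\{r_{m_k}\};6)$ (intersecting the equal preimages $f^{-1}(a)\cap A(\{r_m\};6)=g^{-1}(a)\cap A(\{r_m\};6)$ with the subset preserves the equality), while the propagation conclusion ``$f\equiv g$ on $\mathbb C$'' does not depend on the chosen set. Hence it is enough to establish the propagation property for the subsequence, i.e. we may assume from the outset that $r_m\to\infty$.

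Next I would apply Theorem \ref{thm1.2} with the parameters $n=1$, $q=s=2$, $p=5$ and $\sigma=6$. The required inequality holds, since
$$\left(1-\frac1\sigma\right)p=\left(1-\frac16\right)\cdot 5=\frac{25}{6}>4=\frac{qn}{q-s+1}+n+1.$$
Theorem \ref{thm1.2} then asserts that $A(\{r_m\};6)$ is a $(5,2)$-propagation set of the two linearly non-degenerate transcendental curves $f$ and $g$; by the dictionary above this means precisely that $f\equiv g$, which is the assertion of the corollary.

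The genuine content is carried entirely by Theorem \ref{thm1.2}, so the corollary is essentially bookkeeping; the two points needing attention are the subsequence extraction, which is routine, and the verification that the hypotheses of Theorem \ref{thm1.2} are in force. The latter is the real caveat: Theorem \ref{thm1.2} applies to \emph{transcendental}, linearly non-degenerate curves, i.e. to non-constant, non-rational meromorphic functions. If both $f$ and $g$ are transcendental the argument above is complete. The degenerate configurations — one of $f,g$ constant or rational — fall outside Theorem \ref{thm1.2} and would be the main obstacle to a fully unrestricted statement; they should be dispatched separately by a classical argument (using that a rational map assumes each value only finitely often, so that the preimages occurring in the sharing hypothesis are finite) before Theorem \ref{thm1.2} is invoked.
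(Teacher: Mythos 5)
Your proposal is correct and follows exactly the paper's own derivation: the paper deduces the corollary in one line by specializing Theorem \ref{thm1.2} to $n=1$, $q=s=2$, $p=5$, $\sigma=6$ and checking $\left(1-\frac{1}{6}\right)\cdot 5=\frac{25}{6}>4=\frac{qn}{q-s+1}+n+1$, together with the dictionary from Section \ref{intr} identifying $(5,2)$-propagation sets with five-value uniqueness sets. Your two additional observations --- the subsequence extraction reconciling ``unbounded'' with ``tending to $\infty$,'' and the caveat that Theorem \ref{thm1.2} requires transcendental non-degenerate curves so that rational or constant functions need separate (and in fact necessary, since two distinct rational functions can share five values vacuously on annuli avoiding their finitely many preimages) treatment --- are legitimate points of care that the paper's terse statement glosses over.
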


We consider a sequence of ``narrow" annuli.

\begin{thm}\label{thm1.3}\
Let $f_i(z)\ (i=1,2,...,q; q\geq 2)$ be linearly non-degenerate
transcendental holomorphic curves and let $\{r_m\}$ be a sequence of
P\'olya peak of $T(r)$ with the order
$0<\lambda<+\infty.$ If $0<\kappa<1$ and
$\left(1-\kappa^\lambda\right)p>\frac{qn}{q-s+1}+n+1$, then
$\bigcup_{m=1}^\infty A(\kappa r_m,r_m)$ is a $(p,s)$-propagation
set of $f_i(z)\ (i=1,2,...,q)$; Let $\{\kappa_m\}$ be a sequence of
positive numbers
 tending to $0$. If $p-n-1>\frac{qn}{q-s+1}$, then
$\bigcup_{m=1}^\infty A(\kappa_m r_m,r_m)$ is a $(p,s)$-propagation
set of $f_i(z)\ (i=1,2,...,q)$.
\end{thm}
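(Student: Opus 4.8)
The plan is to argue by contradiction and to run, along the P\'olya peak sequence $\{r_m\}$, the Second Main Theorem argument that underlies Theorem {\bf A}, with the single new feature that every counting function is split according to whether its support lies inside or outside $X=\bigcup_{m}A(\kappa r_m,r_m)$. Suppose the sharing and exterior-product hypotheses of the $(p,s)$-propagation set hold on $X$ but ${\bf f}_1\wedge\cdots\wedge{\bf f}_q\not\equiv0$. For each $i$, since $f_i$ is linearly non-degenerate and the $p$ hyperplanes are in general position, Cartan's Second Main Theorem gives
\[
(p-n-1)\,T(r,f_i)\le\sum_{j=1}^{p}N^{[n]}(r,{\bf a}_j,f_i)+S(r,f_i),
\]
and I would split each $N^{[n]}(r,{\bf a}_j,f_i)=N^{[n]}_X(r,{\bf a}_j,f_i)+N^{[n]}_{X^{c}}(r,{\bf a}_j,f_i)$ into the contribution of the ${\bf a}_j$-points lying in $X$ and of those lying in its complement.

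First I would bound the ``outside'' part. Because any $z$ with $\kappa r_m<|z|<r_m$ already lies in the scale-$m$ annulus, one has $X^{c}\cap B(r_m)\subseteq\overline{B(\kappa r_m)}$, whence, using $N^{[n]}\le N\le T+O(1)$,
\[
\sum_{i=1}^{q}\sum_{j=1}^{p}N^{[n]}_{X^{c}}(r_m,{\bf a}_j,f_i)\le p\sum_{i=1}^{q}T(\kappa r_m,f_i)+O(\log r_m)=p\,T(\kappa r_m)+O(\log r_m).
\]
The P\'olya peak hypothesis of order $\lambda$ now enters decisively through $T(\kappa r_m)=(\kappa^{\lambda}+o(1))T(r_m)$, and the regular growth of $T$ along the peak likewise gives $\sum_i S(r_m,f_i)=o(T(r_m))$ once the finite-measure exceptional radii are discarded. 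Thus the outside contribution is at most $p\kappa^{\lambda}T(r_m)(1+o(1))$, which is exactly the origin of the factor $1-\kappa^{\lambda}$.

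The ``inside'' part is where the propagation is actually produced, and here I would quote the algebraic reduction behind Theorem {\bf A}. On $\bigcup_jf_1^{-1}(H_j)\cap X$ the sharing condition makes these points common ${\bf a}_j$-points of all the $f_i$, and the vanishing of every $s$-fold product ${\bf f}_{j_1}\wedge\cdots\wedge{\bf f}_{j_s}$ forces ${\bf f}_1(z),\dots,{\bf f}_q(z)$ to have rank at most $s-1$ there; exactly as in Stoll's argument, such points are zeros of the relevant determinants built from the $f_i$, so their counting is dominated by the characteristic of the associated auxiliary curve, producing $\sum_i\sum_jN^{[n]}_X(r_m,{\bf a}_j,f_i)\le\frac{qn}{q-s+1}T(r_m)+o(T(r_m))$ with the same combinatorial constant $\frac{qn}{q-s+1}$ as in Theorem {\bf A}. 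Adding the $q$ Second Main Theorem inequalities and inserting the two bounds yields
\[
\Bigl(p-n-1-\tfrac{qn}{q-s+1}-p\kappa^{\lambda}\Bigr)T(r_m)\le o(T(r_m)),
\]
and since $(1-\kappa^{\lambda})p>\frac{qn}{q-s+1}+n+1$ makes the bracket positive while $T(r_m)\to\infty$, we reach a contradiction; hence ${\bf f}_1\wedge\cdots\wedge{\bf f}_q\equiv0$, which proves the first assertion. For the second assertion I would repeat the computation with $\kappa$ replaced by $\kappa_m\to0$: then $T(\kappa_m r_m)=(\kappa_m^{\lambda}+o(1))T(r_m)=o(T(r_m))$, the outside contribution becomes $o(T(r_m))$, the bracket reduces to $p-n-1-\frac{qn}{q-s+1}$, and the condition $p-n-1>\frac{qn}{q-s+1}$ suffices.

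The main obstacle I expect is not the algebra but the simultaneous analytic bookkeeping of the two error sources: one must secure the Second Main Theorem with an error $S(r_m)=o(T(r_m))$ \emph{genuinely along the peak}, which rests on the first author's value-distribution estimates on angular domains and on verifying that the finite-measure exceptional set of radii in the logarithmic-derivative lemma can be avoided by $\{r_m\}$. A second delicate point concerns the second assertion, where letting $\kappa_m\to0$ drives $t=\kappa_m$ toward the boundary of the range on which the P\'olya peak comparison $T(tr_m)\sim t^{\lambda}T(r_m)$ is available; here one must either invoke a form of the peak estimate that remains valid as $t\to0$, or pass to a subsequence of $\{\kappa_m\}$ tending to $0$ slowly enough to stay inside the admissible window.
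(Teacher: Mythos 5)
Your proposal is correct and is essentially the paper's own proof: the paper disposes of Theorem \ref{thm1.3} by running the argument of Theorem \ref{thm1.2} --- contradiction, Cartan's Second Main Theorem, and the split of each truncated counting function at the inner radius so that the annulus part is absorbed via Lemmas \ref{lem2.3} and \ref{lem2.2} into $\frac{qn}{q-s+1}T(r_m)$ --- with the P\'olya peak inequality $T(\kappa r_m)\le(1+o(1))\kappa^{\lambda}T(r_m)$ (only this upper bound, not the equality you wrote, is guaranteed or needed) replacing the logarithmic-convexity step. The two difficulties you flag at the end are already settled by the paper's setup: property 3) in the definition of a P\'olya peak holds for all $1\le t\le r_m''$ and yields $T(\kappa_m r_m)\le K\kappa_m^{\lambda-\varepsilon_m'}T(r_m)=o(T(r_m))$ directly, with no passage to a subsequence, and the finite-measure exceptional set is dodged, exactly as in the printed proof of Theorem \ref{thm1.2}, by applying the Second Main Theorem at a nearby radius $r_m'$ rather than assuming the given peak sequence avoids it (no angular-domain estimates are required for the annulus case).
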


\begin{thm}\label{thm1.9}\
Let $f$ and $g$ be two linearly
non-degenerate transcendental holomorphic curves and let $\{r_m\}$ be a sequence of
P\'olya peak of $T(r)$ with the order
$0<\lambda<+\infty.$ If $0<\kappa<1$ and
$\kappa^\lambda<\frac{3n+2}{5n(2n+3)}$, then
$\bigcup_{m=1}^\infty A(\kappa r_m,r_m)$ is a $(2n+3,2)$-propagation
set of $f$ and $g$; If $\{\kappa_m\}$ be a sequence of
positive numbers
 tending to $0$, then
$\bigcup_{m=1}^\infty A(\kappa_m r_m,r_m)$ is a $(2n+3,2)$-propagation
set of $f$ and $g$.
\end{thm}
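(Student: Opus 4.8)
The plan is to run the proof of Theorem~\ref{thm1.6} over the narrow annuli $A(\kappa r_m,r_m)$, replacing the crude growth comparison available on a wide annulus $A(r_m,r_m^{\sigma})$ by the sharp two-sided estimate that the P\'olya peak hypothesis provides; this is precisely the modification that turns Theorem~\ref{thm1.2} into Theorem~\ref{thm1.3}, now carried out in the refined $(2n+3,2)$ setting of \cite{ChenYan}. Accordingly I argue by contradiction. Suppose the $p=2n+3$ hyperplanes $H_1,\dots,H_p$ in general position satisfy $f^{-1}(H_j)\cap X=g^{-1}(H_j)\cap X$ and that ${\bf f}\wedge{\bf g}=0$ on $\bigcup_j f^{-1}(H_j)\cap X$, where $X=\bigcup_m A(\kappa r_m,r_m)$ (resp. $\bigcup_m A(\kappa_m r_m,r_m)$), and assume for contradiction that ${\bf f}\wedge{\bf g}\not\equiv0$ on $\mathbb{C}$.

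First I record the effect of the P\'olya peak. Fixing $\eps>0$, the defining inequalities of a peak of order $\lambda$ give, for all large $m$,
$$T(\kappa r_m)\le(\kappa^{\lambda}+\eps)\,T(r_m),\qquad N\big(\kappa r_m,\langle{\bf f},{\bf a}_j\rangle\big)\le(\kappa^{\lambda}+\eps)\,T(r_m)+O(1),$$
the second bound following from the first by the first main theorem. Hence every Nevanlinna functional evaluated over the central disk $B(\kappa r_m)$ is controlled by $\kappa^{\lambda}T(r_m)$ up to $\eps\,T(r_m)$. This is the quantity that will measure exactly how much is lost by knowing the sharing of $f$ and $g$ only on the annulus and not inside the hole $B(\kappa r_m)$.

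Next I localize the uniqueness mechanism of \cite{ChenYan}. On $f^{-1}(H_j)\cap X=g^{-1}(H_j)\cap X$ the two curves coincide, so each auxiliary function built from the pairs $\langle{\bf f},{\bf a}_i\rangle,\langle{\bf g},{\bf a}_j\rangle$—whose non-vanishing is forced by ${\bf f}\wedge{\bf g}\not\equiv0$—acquires its full stock of extra zeros at the points of $X$. Feeding the associated truncated counting functions into the second main theorem for holomorphic curves on angular domains of the first author (the engine already used for Theorems~\ref{thm1.1} and \ref{thm1.4}, applied here to boundedly many sectors covering the full ring), and writing every term over $A(\kappa r_m,r_m)$ rather than over $B(r_m)$, I expect to reach an inequality of the shape
$$(3n+2)\,T(r_m)\le 5n(2n+3)\,\big(\kappa^{\lambda}+\eps\big)\,T(r_m)+o\big(T(r_m)\big),$$
in which the coefficient $5n(2n+3)$ collects the \cite{ChenYan} bookkeeping together with the cost of the uncontrolled disk $B(\kappa r_m)$, estimated by the second displayed inequality. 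Dividing by $T(r_m)$, letting $m\to\infty$ and then $\eps\to0$, this contradicts $\kappa^{\lambda}<\frac{3n+2}{5n(2n+3)}$ and proves the first assertion. For the second assertion the annuli $A(\kappa_m r_m,r_m)$ have $\kappa_m\to0$, so $\kappa_m^{\lambda}\to0<\frac{3n+2}{5n(2n+3)}$, the threshold is met for all large $m$, and the same contradiction is reached with no restriction on $\lambda$.

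The hard part will be the localization itself: transplanting the \cite{ChenYan} auxiliary-function argument into the angular (equivalently, annular) Nevanlinna theory so that the remainder terms stay $o(T(r_m))$ along the peak, and so that the exceptional set of radii produced by the angular second main theorem does not swallow the sequence $\{r_m\}$. The decisive point is to estimate the contribution of the hole $B(\kappa r_m)$ precisely—to verify that it is bounded by $5n(2n+3)\kappa^{\lambda}T(r_m)$ and not by some larger multiple—since it is there that the sharp constant $\frac{3n+2}{5n(2n+3)}$ stands or falls.
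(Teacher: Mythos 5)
Your endgame is exactly the intended one: the paper omits the proof of Theorem \ref{thm1.9} altogether, treating it as the P\'olya-peak variant of Theorem \ref{thm1.6} in the same way Theorem \ref{thm1.3} is left to the reader as a variant of Theorem \ref{thm1.2}, and your displayed inequality is precisely what that mirror computation produces. One replaces $r^{\sigma}$ by $r_m$ and $2r$ by $\kappa r_m$ in the proof of Theorem \ref{thm1.6}, keeps the functions $P_i$ and the summation over $1\le i\le 2n+3$ verbatim, and replaces the logarithmic-convexity step $T(r^{\sigma})\ge \frac{\sigma\log r}{\log 2r}T(2r)+(1-\sigma)T(1)$ by peak property 2), so that the hole contributes $\frac{5}{2}(2n+3)T(\kappa r_m)\le \frac{5}{2}(2n+3)(1+o(1))\kappa^{\lambda}T(r_m)$ against a left side $\left(\frac{3}{2}+\frac{1}{n}\right)T(r_m)$; multiplying by $2n$ gives $(3n+2)\le 5n(2n+3)\kappa^{\lambda}$, which is your inequality, and the $\kappa_m\to 0$ case follows as you say. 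But the engine you commit to is the wrong one: no angular second main theorem and no covering by sectors is needed or wanted here. The sharing set is a full annulus centered at the origin, so the ordinary Cartan theory sees it by simple differencing, $N^{n)}(r_m,H_j,f_l)-N^{n)}(\kappa r_m,H_j,f_l)$, exactly as in the proof of Theorem \ref{thm1.6}; the angular machinery of Theorems \ref{thm1.1} and \ref{thm1.4} is used in the paper only because those sharing sets are off-center disks $B(z_m,\varepsilon_m|z_m|)$, which origin-centered counting functions cannot isolate. Your parenthetical ``angular (equivalently, annular)'' conflates two genuinely different calculi.

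This is not a cosmetic objection, because the angular route you sketch (and defer as ``the hard part'') would sink the first assertion. Passing between $N_\Omega$ and $C_\Omega$ (Lemma \ref{lem2.1}) and recovering $T(r_m)$ from sums of sectorial characteristics costs fixed multiplicative factors --- the $2\omega$, the $\omega\sin(\omega\varepsilon)$, the $\frac{\sqrt{2}}{10}$ appearing in (\ref{3.5}) --- which are harmless in Theorems \ref{thm1.1} and \ref{thm1.4} only because there $\varepsilon_m\to 0$ and $s_m\varepsilon_m^2\to\infty$ drive every parasitic term to zero and the contradiction is $Q\le o(1)$ for a fixed $Q>0$. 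In Theorem \ref{thm1.9} the number $\kappa$ is fixed and the threshold $\kappa^{\lambda}<\frac{3n+2}{5n(2n+3)}$ is exact: any constant lost in the $N$-to-$C$ conversion strictly inflates the coefficient $5n(2n+3)$ and proves a weaker statement. (Your scheme would still yield the second assertion, since $\kappa_m^{\lambda}\to 0$ beats any fixed constant --- but even there the sectors buy nothing.) A second concrete point you raise but do not resolve is the exceptional set: the device of Theorem \ref{thm1.6}, retreating to $r_m'\in[r_m/2,r_m]$, is unavailable at the sharp threshold, because the peak inequalities give upper bounds only and provide no lower bound for $T(r_m')$ in terms of $T(r_m)$; one should instead take the peak sequence outside the finite-measure exceptional set from the outset (Theorem 1.1.3 of \cite{Zheng}, quoted in Section 2), or perturb the radius slightly upward, where monotonicity of $T$ and a ring of relative width $\delta$ contribute only $O(\delta)T(r_m)$ before letting $\delta\to 0$.
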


The sequence $\{r_m\}$ depends on $T(r)$, in
essence, on $\max\{T(r,f_i):\ i=1,2,...,q\}.$ However, from the
definition of P\'olya peak sequence which will be given in next
section, we know that any subsequence of a P\'olya peak sequence is
still a P\'olya peak sequence. Hence, we can choose a sequence
$\{r_m\}$ of P\'olya peak and a sequence of positive numbers
$\{\kappa_m\}$ in Theorems \ref{thm1.3} and \ref{thm1.9} such that $\kappa_m\to 0$ and
$\frac{\kappa_mr_m}{r_{m-1}}\to\infty (m\to\infty).$ This implies
that $A(\kappa_m r_m,r_m)\cap
A(\kappa_{m-1}r_{m-1},r_{m-1})=\emptyset.$

Finally, we consider the truncated multiplicities
in the uniqueness problem (see \cite{Fujimoto4}, \cite{ThaiQuang},
\cite{Tan}, \cite{TanTruong}, \cite{DethloffTan} and
\cite{YanChen}).

For $n\ge 2$, in \cite{Si2}, Si gave an important development of the technique in studying
uniqueness problem of holomorphic curves and obtained a uniqueness theorem with $2n+2$ hyperplanes,
which shows the unicity of holomorphic curves is different from that of meromorphic functions in essence.
If we consider that $\mathbb{C}$ is replaced by a precise subset $X$, dose this technique work
under the weaker conditions?
By more accurate estimate, we can establish the following result corresponding to
Theorem 2 in Si \cite{Si2}.

\begin{thm}\label{thm1.5}\ Let $f$ and $g$ be two linearly non-degenerate holomorphic
curves of $\mathbb{C}\rightarrow \mathcal{P}^n(\mathbb{C})$ with $n\ge 2$ and at least one of them be of infinite order, and let
$\{\varepsilon_m\}$ be a sequence of positive numbers such that
$\varepsilon_m\rightarrow 0$ as $m\rightarrow \infty$. Then there
exists a sequence of complex numbers $\{z_m\}$ with $z_m\rightarrow
\infty$ as $m\rightarrow \infty$ such that $B(\{z_m\};\{\varepsilon_m\})$ is a $(2n+2,2;n+1)$-propagation set of
$f$ and $g$.
\end{thm}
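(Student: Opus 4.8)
The plan is to argue by contradiction against the conclusion \eqref{11}: assume ${\bf f}\wedge{\bf g}\not\equiv 0$ on $\mathbb{C}$ and, after relabelling, that $f$ is the curve of infinite order. The whole-plane counterpart of this statement is Si's Theorem 2 in \cite{Si2}, whose mechanism couples Cartan's second main theorem at truncation level $n$ with an auxiliary-function analysis specially adapted to $2n+2$ hyperplanes and truncation $n+1$. The new ingredient here is to localise that mechanism to a sequence of disks $B(z_m,\varepsilon_m|z_m|)$ placed inside angular sectors along which $f$ realises its full value distribution; the infinite-order hypothesis is precisely what produces such sectors. Note the quantifier order: I first select $\{z_m\}$ from the intrinsic growth of $f$ and $g$, and only afterwards let the $2n+2$ hyperplanes vary over all admissible configurations, so the construction must be robust against the choice of hyperplanes.

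First I would carry out the point selection exactly as in the proofs of Theorems \ref{thm1.1} and \ref{thm1.4}. The infinite order of $T(r)=T(r,f)+T(r,g)$ lets me invoke Lemma 2.4 to obtain a sequence of radii $\rho_m\to\infty$ together with angular domains $\Omega_m=\{z:\alpha_m<\arg z<\beta_m\}$ on which the angular characteristic $S(\rho_m,\Omega_m,f)$ grows so rapidly that, for any fixed finite family of hyperplanes in general position, the associated angular error term $Q(\rho_m)$ is negligible against it. I then put $z_m$ on the bisecting ray of $\Omega_m$ at modulus $\rho_m$; since $\varepsilon_m\to 0$, the disk $B(z_m,\varepsilon_m|z_m|)$ lies inside $\Omega_m$ for all large $m$, and after passing to a subsequence the disks are pairwise disjoint and tend to $\infty$. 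This produces the candidate set $X=B(\{z_m\};\{\varepsilon_m\})$.

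The heart of the proof is then to rerun Si's argument with the global Cartan theorem replaced by its angular version (the value-distribution theory of the first author) and with all counting functions evaluated over $X$. For the given $2n+2$ hyperplanes I form Si's auxiliary functions out of the logarithmic derivatives of $\langle{\bf f},{\bf a}_j\rangle$ and $\langle{\bf g},{\bf a}_j\rangle$; the truncated sharing hypothesis $\min\{\nu_{\langle{\bf f},{\bf a}_j\rangle},n+1\}=\min\{\nu_{\langle{\bf g},{\bf a}_j\rangle},n+1\}$ on $X$, together with ${\bf f}\wedge{\bf g}=0$ on $\bigcup_jf^{-1}(H_j)\cap X$ and the emptiness of $f^{-1}(H_i\cap H_j)\cap X$ for $i\neq j$, forces these functions to vanish or to have strictly controlled poles at every hyperplane point lying in $X$. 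The angular second main theorem on $\Omega_m$ furnishes $(n+1)\,S(\rho_m,\Omega_m,f)\le\sum_j\bar N^{[n]}_{\Omega_m}(\rho_m,H_j)+Q(\rho_m)$, and the filling property supplied by Lemma 2.4 guarantees that the share of each $\bar N^{[n]}_{\Omega_m}$ coming from the disks $B(z_m,\varepsilon_m|z_m|)$ already accounts for the bulk of the sum. Feeding the vanishing of the auxiliary functions into Si's finer estimate — the one by which $2n+2$ hyperplanes suffice once multiplicities are truncated at level $n+1$ — bounds the growth of the minors $f_ig_j-f_jg_i$ of ${\bf f}\wedge{\bf g}$ along the sectors by a quantity of smaller order than $S(\rho_m,\Omega_m,f)$, which is impossible unless ${\bf f}\wedge{\bf g}\equiv 0$.

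The principal obstacle I anticipate is the simultaneous control of three competing quantities. The angular error $Q(\rho_m)$ must be dominated by $S(\rho_m,\Omega_m,f)$, which the infinite-order growth delivered by Lemma 2.4 arranges. Si's multiplicity bookkeeping at truncation $n+1$ must survive the transfer from $\mathbb{C}$ to the angle without forfeiting the sub-Stoll count $p=2n+2$; this is where the more accurate estimate announced before the theorem is needed. Hardest of all, the truncated counting taken only over the shrinking disks $B(z_m,\varepsilon_m|z_m|)$ must still recover essentially all of the angular counting $\bar N^{[n]}_{\Omega_m}$: here the smallness $\varepsilon_m\to 0$ pulls against the density of $a_j$-points, and the resolution is that along a filling sector the zeros of each $\langle{\bf f},{\bf a}_j\rangle$ occur with the full, infinite convergence exponent, so that even disks whose radii are small relative to $|z_m|$ capture an overwhelming proportion of them.
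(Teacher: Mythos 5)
Your frame---select $\{z_m\}$ via Lemma \ref{lem2.4} exactly as in Theorem \ref{thm1.1}, then localise Si's $2n+2$-hyperplane argument by running the angular second main theorem (Theorem \ref{thm2.1}) in place of Cartan's---is indeed the paper's route, but you black-box precisely the part that carries the proof, and where you do describe that part you describe it incorrectly. Si's auxiliary objects are not logarithmic derivatives, and the endgame is not a growth bound on the minors $f_ig_j-f_jg_i$. The first stage uses the products $P_i=\langle{\bf f}_1,{\bf a}_i\rangle\langle{\bf f}_2,{\bf a}_{\sigma(i)}\rangle-\langle{\bf f}_2,{\bf a}_i\rangle\langle{\bf f}_1,{\bf a}_{\sigma(i)}\rangle$ to control the truncated counting functions, after which Step 1 shows that the ratios $h_i=\langle{\bf f}_1,{\bf a}_i\rangle/\langle{\bf f}_2,{\bf a}_i\rangle$ have Fujimoto's property $(P_{2n+2,n+1})$ in the torsion-free group $\mathcal{M}^*/\mathbb{C}^*$: from the determinant identity $\sum_I A_Ih_I=0$ one takes a minimal relation of length $\tau$, and if $\tau\ge 2$ builds a linearly non-degenerate curve $h:\mathbb{C}\to\mathcal{P}^{\tau-1}(\mathbb{C})$, to which Theorem \ref{thm2.1} is applied (with $T(r,h)\le 2(\tau^2-1)(2n+2)T(r)$ so the error terms for $h$ are absorbed), forcing $\tau=1$ by the same point-selection contradiction. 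Lemma \ref{lem4.1} then gives $h_1/h_2\in\mathbb{C}^*$, hence $h_1\equiv h_2$ for $n\ge 2$, and---this is the decisive, genuinely new part---Step 2 is an \emph{equality-case} analysis: at $p=2n+2$ the chain of second-main-theorem inequalities leaves no slack, so each must be an equality up to $o(1)T(r_m)/r_m^{\omega_m}$, which yields (\ref{eqnN6})--(\ref{eqnN15}) and finally that $\mathcal{Q}=\{i:\ Q_i\not\equiv 0\}$ has at most $n+1$ elements, contradicting $f\not\equiv g$. ``Feeding the vanishing of the auxiliary functions into Si's finer estimate'' names none of this, and a direct order-of-growth comparison cannot close the argument with only $2n+2$ hyperplanes---that is exactly why the Stoll-type count of Theorem \ref{thm1.1} demands $p>\frac{qn}{q-s+1}+n+1$.

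Separately, your geometric set-up is inverted, and the patch you propose for it is false. You place the disk inside the sector $\Omega_m$ and must then claim that the shrinking disks ``capture an overwhelming proportion'' of the zeros of \emph{every} $\langle{\bf f},{\bf a}_j\rangle$ in the sector; no such filling-disk statement holds uniformly over all admissible hyperplanes, and Lemma \ref{lem2.4} cannot supply it, being a pure growth statement about $T$. The paper arranges the opposite inclusion: with $\delta_m=\frac{1}{2}\varepsilon_m$ and $\kappa_m^2=\sqrt{1-\frac{2\pi}{3}\varepsilon_m}$ one has $\mathcal{Z}_{\delta_m}\cap A(\kappa_m^2r_m,r_m)\subset B(z_m,\varepsilon_m|z_m|)$, so the sharing hypotheses control the annular part $C_{\mathcal{Z}_{\delta_m}}(\kappa_m^2r_m,r_m;\cdot)$ of the angular counting functions outright, while the inner part $t\le\kappa_m^2r_m$ is not ``captured'' at all but shown negligible against $T(r_m)/r_m^{\omega_m}$ through (\ref{1.3}): the coefficients $2e\omega_m\kappa_m^{s_m-\omega_m}$, $e\omega_m^2/(s_m-\omega_m)$ and $e(\kappa_m^{-4\omega_m}-1)\kappa_m^{s_m}/\log\kappa_m^{-1}$ all tend to $0$ because $s_m\varepsilon_m^2\to\infty$. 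The lower bound is likewise not a density statement on disks: one fixes a \emph{single} hyperplane $H$ with $\sum N(r_m,H,f_i)\sim T(r_m)$ by Cartan's theorem and chooses the sector index $k_m$ by pigeonhole among $\left[\frac{4\pi}{\varepsilon_m}\right]+2$ sectors, so that $\sum_iN_{\mathcal{Z}^{k_m}_{\delta_m/2}}(r_m,H,f_i)\ge\frac{\varepsilon_m}{4\pi+1}T(r_m)$, and the factor $\omega_m=\pi/(2\delta_m)$ in (\ref{3.4}) converts this into a bound of the needed size. With your inclusion direction, the very first localisation inequality fails, and the false capture claim is what you would need to repair it.
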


\begin{thm}\label{thm1.7}\ Let $\{r_m\}$ be a
sequence of positive numbers tending to $\infty$. Then
$\bigcup_{m=1}^\infty A(r_m,e^{r_m})$ is a $(2n+2,2;n+1)$-propagation set of any two linearly
non-degenerate transcendental holomorphic curves $f$ and $g$ from $\mathbb{C}$ to
$\mathcal{P}^n(\mathbb{C})$ with $n\ge 2$.
\end{thm}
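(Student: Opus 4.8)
The plan is to run Si's whole-plane argument (Theorem 2 in \cite{Si2}) but to read every counting function on the wide annuli $A(r_m,e^{r_m})$ instead of on full disks, the gain being that the logarithmic length of $A(r_m,e^{r_m})$ exhausts that of $B(e^{r_m})$ as $m\to\infty$. Suppose, to the contrary, that $\mathbf f\wedge\mathbf g\not\equiv 0$, and let $H_1,\dots,H_{2n+2}$ be hyperplanes in general position with $\min\{\nu_{\langle\mathbf f,\mathbf a_j\rangle},n+1\}=\min\{\nu_{\langle\mathbf g,\mathbf a_j\rangle},n+1\}$ on $X=\bigcup_m A(r_m,e^{r_m})$, with $f^{-1}(H_i\cap H_j)\cap X=\emptyset$ for $i\neq j$, and with $\mathbf f=\mathbf g$ projectively at every point of $\bigcup_j f^{-1}(H_j)\cap X$. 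First I would record the decisive growth estimate $T(r_m,f)=o\big(T(e^{r_m},f)\big)$ (and likewise for $g$): since $T(r,f)$ is logarithmically convex, the function $U(x)=T(e^x,f)$ is convex and increasing with $U(x)/x\to\infty$ by transcendence, whence $U(\log r_m)/U(r_m)\le(\log r_m/r_m)(1+o(1))\to 0$. Thus the contribution of $B(r_m)$ to every counting function is negligible beside that of the annulus, and this is the annular substitute for the factor $1-\tfrac1\sigma$ of Theorem \ref{thm1.2}, now pushed to its limiting value $1$ — precisely what the extremal bound $p=2n+2$ demands, since no slack remains to absorb a proportion below $1$.

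Next I would assemble Si's auxiliary functions. Writing $h_j=\langle\mathbf f,\mathbf a_j\rangle/\langle\mathbf g,\mathbf a_j\rangle$, the truncated sharing on $X$ forces every zero or pole of $h_j$ lying in $X$ to sit where $\langle\mathbf f,\mathbf a_j\rangle$ or $\langle\mathbf g,\mathbf a_j\rangle$ vanishes to order $>n$; the counting function of such points is $O\big(\tfrac1{n+1}(T(r,f)+T(r,g))\big)$, a controlled fraction of the characteristic. The hypothesis $f^{-1}(H_i\cap H_j)\cap X=\emptyset$ keeps the divisors of the $h_j$ with distinct $j$ disjoint on $X$, exactly as distinct values keep the $a_j$-preimages disjoint in the classical five-value proof. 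Combining these with the linear relations among the $\langle\mathbf f,\mathbf a_j\rangle$ coming from general position (every $n+1$ of the $\mathbf a_j$ form a basis of $\mathbb C^{n+1}$) and with $\mathbf f=\mathbf g$ on $\bigcup_j f^{-1}(H_j)\cap X$, Si's construction produces a nontrivial auxiliary holomorphic map whose divisor, away from the excess-multiplicity term, is supported off the shared preimages inside $X$ and inside $B(r_m)$.

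The heart of the proof is the growth contradiction. The Cartan Second Main Theorem truncated at level $n$ on $B(e^{r_m})$ gives $(n+1)\,T(e^{r_m},f)\le\sum_{j=1}^{2n+2}N^{[n]}(e^{r_m},H_j,f)+S(e^{r_m},f)$, and symmetrically for $g$. Feeding in the previous step, the counting terms are, up to $o(T(e^{r_m}))$ and up to an excess-multiplicity term $O\big(\tfrac1{n+1}(T(e^{r_m},f)+T(e^{r_m},g))\big)$, bounded through the auxiliary map by a fixed multiple of $T(e^{r_m},f)+T(e^{r_m},g)$, as in the classical five-value proof. Adding the inequalities for $f$ and $g$ yields an inequality of the shape $c_n\big(T(e^{r_m},f)+T(e^{r_m},g)\big)\le S(e^{r_m})$, where $c_n>0$ for $n\ge 2$ (the surviving coefficient being of order $n-1-O(\tfrac1{n+1})$). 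Since $r_m\to\infty$ forces $S(e^{r_m})=o\big(T(e^{r_m},f)+T(e^{r_m},g)\big)$ along the sequence, this is impossible; hence $\mathbf f\wedge\mathbf g\equiv 0$, i.e.\ $f\equiv g$, and $\bigcup_m A(r_m,e^{r_m})$ is a $(2n+2,2;n+1)$-propagation set.

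The main obstacle I anticipate is not the skeleton, which copies Si, but the bookkeeping that discarding $\mathbb C\setminus X$ costs only $o(T(e^{r_m}))$. Because $p=2n+2$ is extremal, there is no room to absorb a proportion factor strictly below $1$; one must verify that the exponential width $e^{r_m}$ drives the $B(r_m)$-fraction of every one of the $2n+2$ counting functions to zero, uniformly in $j$, and simultaneously control the logarithmic-derivative error $S(e^{r_m})$ against the annular characteristic. The remaining delicate point is the truncation gap between the Second Main Theorem (level $n$) and the hypothesis (level $n+1$): one must show the excess-multiplicity divisors are genuinely absorbed by the coefficient $1/(n+1)$, keeping the surviving coefficient $c_n$ positive — which is exactly where the ``more accurate estimate'' promised before the statement, and the restriction $n\ge 2$, are spent.
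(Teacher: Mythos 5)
Your annular reduction is exactly the paper's: Theorem \ref{thm1.7} is proved by transplanting the proof of Theorem \ref{thm1.5}, the only new ingredient being that logarithmic convexity gives $T(e^{r_m})\geq \frac{r_m}{\log 2r_m}T(2r_m)+(1-\frac{r_m}{\log 2r_m})T(1)$, hence $T(2r_m)=o(T(e^{r_m}))$, so the contribution of $B(2r_m)$ to every counting function is negligible — your first paragraph captures this correctly, and you are also right that this substitutes for the factor $1-\frac1\sigma$ at its limiting value. The fatal problem is your ``heart of the proof.'' At the extremal configuration $p=2n+2$ with truncation level $n+1$, the second main theorem bookkeeping yields \emph{no} positive surviving coefficient $c_n$. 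Concretely: the sharing hypothesis gives, for $l=1,2$, an inequality of the shape $2n\sum_{j}N^{1)}(\cdot,H_j,f_l)+2\sum_{j}\sum_{l'}N^{n+1)}(\cdot,H_j,f_{l'})\le (2n+2)\sum_{l'}T(\cdot,f_{l'})+O(1)$; using $N^{1)}\ge\frac1n N^{n)}$ and Cartan's bound $(n+1)T(\cdot,f_l)\le\sum_j N^{n)}(\cdot,H_j,f_l)+S$, summation over $l$ produces $4(n+1)\bigl(T(\cdot,f_1)+T(\cdot,f_2)\bigr)$ on \emph{both} sides. The balance is exact — this is precisely why $2n+3$ hyperplanes (Theorems \ref{thm1.4} and \ref{thm1.6}, after \cite{ChenYan}) is the threshold for a direct counting contradiction, and why \cite{Si2} is a genuinely different argument. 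What one actually extracts is not a contradiction but the statement that the excess-multiplicity terms $N^{1)}_{>n}$ are $o(T)$ and that every inequality used becomes an asymptotic equality (the annular analogues of (\ref{eqnN5})--(\ref{eqnN11})). Your claimed inequality $c_n\bigl(T(e^{r_m},f)+T(e^{r_m},g)\bigr)\le S(e^{r_m})$ with $c_n$ ``of order $n-1$'' is unobtainable, so the proposal as written proves nothing at $p=2n+2$.

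The entire second half of the proof, absent from your sketch, lives off those forced equalities. Step 1 is not merely ``Si's construction produces a map'': establishing property $(P_{2n+2,n+1})$ requires the $\tau$-minimality argument, in which one builds from the relation $\sum_I A_I h_I=0$ among the products $h_I=h_{i_1}\cdots h_{i_{n+1}}$ an auxiliary linearly non-degenerate curve $h:\mathbb{C}\to\mathcal{P}^{\tau-1}(\mathbb{C})$ and rules out $\tau\ge 2$ by a separate second-main-theorem contradiction (using $T(r,h)\le 2(\tau^2-1)(2n+2)T(r)$ to control the error terms). Then Lemma \ref{lem4.1} yields $h_1=ch_2$, and the hypothesis $n\ge 2$ is spent here, in concluding $c=1$ — for $n=1$ only a M\"obius transformation survives, which is Theorem \ref{thm1.8} — not, as you suggest, in ``absorbing excess multiplicity by the coefficient $1/(n+1)$.'' Finally, the endgame: the factorization of $P_1$ through $h_1\equiv h_2$ forces $N^{1)}$ of $H_1$ to be $o(T)$ (the analogue of (\ref{eqnN13})), whence the characteristic concentrates on a single hyperplane's counting function ((\ref{eqnN14})); introducing $Q_i=\langle{\bf f},{\bf a}_i\rangle\langle{\bf g},{\bf a}_{n+1}\rangle-\langle{\bf f},{\bf a}_{n+1}\rangle\langle{\bf g},{\bf a}_i\rangle$, one shows that if $n+2$ of the $Q_i$ were nontrivial then $n+2$ truncated counting functions would all be $o(T)$, contradicting the second main theorem, so at least $n+1$ of the $Q_i$ vanish identically — contradicting $f\not\equiv g$ by general position. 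None of the steps from the equality analysis onward appear in your proposal, and without them there is no route from the balanced inequality to a contradiction.
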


We can choose a
sequence of positive numbers $\{r_m\}$ such that $r_{m+1}>e^{r_m}$
and $\bigcup_{m=1}^\infty A(r_m,e^{r_m})$ is also a union of disjoint annuli.

Indeed, by the original result and idea of Si \cite{Si2}, we can weaken the condition
$$\min\{\nu_{\langle{{\bf f}, {\bf a}_j}\rangle}(z),n+1\}=
\min\{\nu_{\langle{{\bf g}, {\bf a}_j}\rangle}(z),n+1\}\ \mbox{on}\ X$$ as $\{z\in f^{-1}(H_j)\cap X:
\nu_{\langle{{\bf f}, {\bf a}_j}\rangle}(z)\leq n\}=\{w\in g^{-1}(H_j)\cap X:\nu_{\langle{{\bf g}, {\bf a}_j}\rangle}(w)\leq
n\}$ and $\{z\in f^{-1}(H_j)\cap X: \nu_{\langle{{\bf f}, {\bf a}_j}\rangle}(z)\ge n\}=\{w\in
g^{-1}(H_j)\cap X:\nu_{\langle{{\bf g}, {\bf a}_j}\rangle}(w)\ge n\}$.

For $n=1$, the result given by Tran \cite{Tan} is an improvement of Nevanlinna's four-value theorem. In
view of Theorem 1.4 in \cite{Tan}, we can obtain the following results
which are corresponding to the four-value theorem.

\begin{thm}\label{thm1.8}\ Let $f$ and $g$ be two distinct meromorphic functions such that $T(r)$ is of infinite order.
Let $a_1,a_2,a_3,a_4$ be four distinct values on $\hat{\mathbb{C}}$. Let $\{\varepsilon_m\}$
be a sequence of positive numbers such that $\varepsilon_m\to 0$ as
$m\to\infty$. Then there exists a sequence of complex numbers
$\{z_m\}$ with $z_m\to\infty$ as $m\to\infty$ such that
if
$$
\min\{\nu_{\langle{\bf f},{\bf a}_j\rangle}(z),2\}=\min\{\nu_{\langle{\bf g},{\bf a}_j\rangle}(z),2\}\ \mbox{on}\ B(\{z_m\};\{\varepsilon_m\})\ \mbox{for}\ j=1,2,3,4,
$$
then $g$ is a M\"obius transformation of $f$.
\end{thm}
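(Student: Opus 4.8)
The plan is to localize the four-value theorem of \cite{Tan} (its Theorem 1.4), which is stated on all of $\mathbb{C}$, to the sparse disk set, following exactly the scheme already used for the truncated-multiplicity disk result, Theorem \ref{thm1.5}. Tran's theorem gives, on $\mathbb{C}$, the implication that sharing the four distinct values $a_1,a_2,a_3,a_4$ with truncation level $2$ forces $g$ to be a M\"obius transform of $f$; its proof rests on the Second Main Theorem together with a bookkeeping of truncated counting functions that is sharp precisely because four is the borderline number of values. My strategy is to reproduce that argument with the global Nevanlinna theory replaced by the angular-domain value-distribution theory of the first author, so that the hypothesis need only hold on disks lying deep inside favourable sectors.

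First I would select the points $\{z_m\}$. Since $T(r)=T(r,f)+T(r,g)$ is of infinite order, Lemma 2.4 furnishes a sequence $z_m\to\infty$ together with angular domains $\Omega_m\supset B(z_m,\varepsilon_m|z_m|)$ on which the angular characteristic and counting functions of $f$ and $g$ carry the \emph{full} global strength; this is the crucial gain of infinite order, for it lets an arbitrarily thin sector capture value distribution of the same order as the whole plane, with no loss of constant. I would then transcribe the four-value argument of \cite{Tan} into the angular domain: write the angular Second Main Theorem for $f$ and for $g$ against $a_1,a_2,a_3,a_4$ with the counting functions truncated at level $2$, and use the hypothesis
\[
\min\{\nu_{\langle{\bf f},{\bf a}_j\rangle}(z),2\}=\min\{\nu_{\langle{\bf g},{\bf a}_j\rangle}(z),2\}\ \mbox{on}\ B(\{z_m\};\{\varepsilon_m\}),\quad j=1,2,3,4,
\]
to equate, disk by disk, the truncated $a_j$-point contributions of $f$ and $g$ inside each $\Omega_m$. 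As in \cite{Tan}, this matching, fed into the sectorial Second Main Theorem, forces the globally defined auxiliary meromorphic function $\Phi$ of Tran's proof---whose identical vanishing is equivalent to ``$g$ is a M\"obius transform of $f$''---to accumulate, inside the $\Omega_m$, more zeros than its global characteristic can carry; by the First Main Theorem this forces $\Phi\equiv 0$, a global identity equivalent to the desired M\"obius relation on $\mathbb{C}$.

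The main obstacle lies in the middle step, and it is genuinely harder than in the propagation-set theorems. Four values is the \emph{borderline} case of the defect relation---the deficiency sum is allowed to reach its maximum $2$---so, unlike the strict inequality $p>\frac{qn}{q-s+1}+n+1$ that provides slack in Theorems \ref{thm1.1}--\ref{thm1.7}, here the estimate must be run at equality. Tran's sharp truncated-multiplicity bookkeeping therefore has to be reproduced \emph{exactly} in angular form: every error term must be shown to be $o(S(r,\Omega_m))$ off a radius set of finite logarithmic measure, the logarithmic-derivative lemma must be used in its angular version, and---most delicately---the thin disks $B(z_m,\varepsilon_m|z_m|)$, rather than the full sectors, must be shown to supply all the matched $a_j$-points needed to saturate the inequality. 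Verifying that the full-strength value distribution granted by infinite order makes this tight bookkeeping survive the passage from $\mathbb{C}$ to the sparse disk set, uniformly in $m$, is where the real work lies.
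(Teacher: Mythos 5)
Your overall direction is the paper's: Theorem \ref{thm1.8} is obtained there as the $n=1$ case embedded in the proof of Theorem \ref{thm1.5} (Step 1 plus the first lines of Step 2), i.e.\ precisely by localizing the truncated-multiplicity argument to the disks via the angular theory, with radii from Lemma \ref{lem2.4}. But the decisive mechanism is missing from your sketch, and the substitute you propose would fail. The paper (following Fujimoto--Si--Tan) sets $h_i=\langle{\bf f},{\bf a}_i\rangle/\langle{\bf g},{\bf a}_i\rangle$, forms the products $h_I$ over the $(n+1)$-element index sets, uses the determinant identity $\sum_{I}A_Ih_I=0$, and shows the minimal length $\tau$ of a linear relation among the $h_I$ is $1$: assuming $\tau\ge 2$ one builds the auxiliary linearly non-degenerate curve $h$ into $\mathcal{P}^{\tau-1}(\mathbb{C})$ from the $h_{I''_i}$ and contradicts Theorem \ref{thm2.1} in the sector, the key point being that every zero or pole of $h_i$ in the disks is a point where $\nu_{\langle{\bf f},{\bf a}_i\rangle}\neq\nu_{\langle{\bf g},{\bf a}_i\rangle}$, hence of multiplicity $>n$ by the level-$(n+1)$ truncated sharing, so all the relevant counting functions are $o(1)\frac{T(r_m)}{r_m^{\omega_m}}$ against a lower bound of exact order $\frac{T(r_m)}{r_m^{\omega_m}}$. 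Then Fujimoto's lemma on torsion-free abelian groups (Lemma \ref{lem4.1}, applied to $\{[h_1],\dots,[h_4]\}$ with property $(P_{4,2})$ in $\mathcal{M}^*/\mathbb{C}^*$) gives $h_i/h_j\equiv c\in\mathbb{C}^*$ for two indices, and this bilinear identity between $f$ and $g$ \emph{is} the M\"obius relation. Your endgame --- a pre-specified auxiliary function $\Phi$ ``whose identical vanishing is equivalent to the M\"obius relation,'' forced to vanish by zero-accumulation and the First Main Theorem --- cannot replace this: the conclusion is that some ratio $h_i/h_j$ equals an \emph{unknown} nonzero constant $c$ (nothing forces $c=1$; the M\"obius map is generally not the identity), so there is no function fixed in advance whose vanishing you could establish by counting zeros. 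The group-theoretic lemma is the missing idea, not an optional packaging.

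The localization itself is also misdescribed at the two points where it is delicate. Lemma \ref{lem2.4} supplies only radii $r_m$ with $T(t)/t^{s_m}\le eT(r_m)/r_m^{s_m}$; it does not produce sectors ``carrying full global strength with no loss of constant.'' The direction $\theta_{m,k_m}$ is found by pigeonholing Cartan's second main theorem over roughly $4\pi/\varepsilon_m$ sectors, which loses a factor $\varepsilon_m$ that is recouped only because the angular characteristic carries the weight $r^{-\omega_m}$ with $\omega_m=\pi/\varepsilon_m$. Moreover your inclusion is backwards: what is needed is $\mathcal{Z}_{\delta_m}\cap A(\kappa_m^2r_m,r_m)\subset B(z_m,\varepsilon_m|z_m|)$ (truncated sector inside the disk), not a sector containing the disk; consequently the sharing hypothesis controls only the outer annular part of each sector, and the inner counting terms $C_{\mathcal{Z}_{\delta_m}}(\kappa_m r_m;\cdot)$ together with the kernel-modification term $N_1$ must be killed separately --- exactly where Lemma \ref{lem2.4} (with $s_m\varepsilon_m^2\to\infty$) and the bound $T(r,h)\le 2(\tau^2-1)(2n+2)T(r)$ enter. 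Finally, your worry that the four-value case must be run ``at equality'' is misplaced for this theorem: the contradiction forcing $\tau=1$ has ample slack, and the genuinely tight equality-case analysis in Si's style occurs only in Step 2 of Theorem \ref{thm1.5} for $n\ge2$, which Theorem \ref{thm1.8} does not need; note also that for $n=1$ the auxiliary condition $f=g$ on the preimages, used in Step 1, is automatic, since hyperplanes in $\mathcal{P}^1(\mathbb{C})$ are points.
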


\begin{thm}\label{thm1.10}\ Let $\{r_m\}$ be a
sequence of positive numbers tending to $\infty$. For any two transcendental meromorphic functions $f$ and $g$, if
$$
\min\{\nu_{\langle{\bf f},{\bf a}_j\rangle}(z),2\}=\min\{\nu_{\langle{\bf g},{\bf a}_j\rangle}(z),2\}\ \mbox{on}\ A(\{r_m\};\sigma)\ \mbox{for}\ j=1,2,3,4,
$$
then $g$ is a M\"obius transformation of $f$ for $\sigma>866$.\end{thm}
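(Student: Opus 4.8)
The plan is to transcribe the proof of Tran's four--value theorem (Theorem 1.4 of \cite{Tan}) into the annular setting, applying the global second main theorem at a well--chosen radius inside each annulus and controlling the part of the value distribution that is \emph{not} shared through the fatness $\sigma$. I argue by contradiction and suppose that $g$ is not a M\"obius transformation of $f$.

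Write $N^{[2]}(r,a,h)$ for the counting function of the $a$--points of $h$ with multiplicities truncated at $2$. The hypothesis $\min\{\nu_{\langle{\bf f},{\bf a}_j\rangle}(z),2\}=\min\{\nu_{\langle{\bf g},{\bf a}_j\rangle}(z),2\}$ on $A(\{r_m\};\sigma)$ says that on each annulus $A(r_m,r_m^\sigma)$ the $a_j$--points of $f$ and of $g$ coincide with equal truncated multiplicities, so the annulus parts of the truncated counting functions agree:
$$N^{[2]}(R,a_j,f)-N^{[2]}(r_m,a_j,f)=N^{[2]}(R,a_j,g)-N^{[2]}(r_m,a_j,g),\quad r_m\le R\le r_m^\sigma,\ j=1,2,3,4.$$
Since each $N^{[2]}(e^u,a_j,h)$ is convex and increasing in $u=\log r$, the mass carried by the inner disk is small relative to the outer radius,
$$N^{[2]}(r_m,a_j,h)\le\tfrac{1}{\sigma}\,N^{[2]}(r_m^\sigma,a_j,h)+O(1),$$
so up to a factor $1/\sigma$ the whole of $N^{[2]}(r_m^\sigma,a_j,h)$ is already supported in the annulus and hence shared by $f$ and $g$. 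Because the second main theorem is valid only off an exceptional set of finite logarithmic measure, I fix a small $\delta>0$ and choose for each large $m$ a radius $R_m\in[r_m^{\sigma-\delta},r_m^\sigma]$ at which it holds for both $f$ and $g$; the width $(\sigma-1)\log r_m$ of the annulus leaves room for this choice, and the discarded shell $[R_m,r_m^\sigma]$ costs only the arbitrarily small fraction $\delta$.

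Now I run Tran's argument at the radius $R_m$. His proof shows that if $g$ is not a M\"obius transformation of $f$ then the auxiliary functions formed from the cross--ratios of $f$, $g$ and $a_1,\dots,a_4$ are non--constant, and combining the second main theorem for $f$ and for $g$ with the shared truncated counting produces an inequality of the shape
$$T(R_m,f)+T(R_m,g)\le\theta\sum_{j=1}^4\big(\text{truncated counting at }a_j\big)+S(R_m),$$
with an absolute constant $\theta<1$ carried over from his estimate. Splitting each counting term into its annulus part (shared, by the second paragraph) and its inner part (at most $\tfrac1\sigma$ of the characteristic) turns this into
$$T(R_m,f)+T(R_m,g)\le\Big(\theta+\tfrac{c}{\sigma}+c\,\delta\Big)\big(T(R_m,f)+T(R_m,g)\big)+S(R_m)$$
for an absolute constant $c$. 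The threshold $\sigma>866$ is precisely the requirement that $\theta+c/\sigma<1$ after letting $\delta\to0$; with it the coefficient on the right is bounded away from $1$, and since $S(R_m)=o\big(T(R_m,f)+T(R_m,g)\big)$ as $R_m\to\infty$ off the exceptional set, we obtain $T(R_m,f)+T(R_m,g)=o\big(T(R_m,f)+T(R_m,g)\big)$, which is impossible because $f$ and $g$ are transcendental. Hence $g$ is a M\"obius transformation of $f$.

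The main obstacle is the quantitative accounting in the last step: one must push Tran's inequalities for the \emph{truncated} four--value theorem through with explicit coefficients, keep track of how much of each truncated counting function is genuinely supported in the annulus versus lost to the inner disk and to the exceptional shell, and fold these together with the error term $S$ so that the final coefficient is of the form $\theta+c/\sigma$ and is $<1$ precisely when $\sigma>866$. Ensuring that good radii $R_m$ can be chosen simultaneously for $f$ and $g$, uniformly over the arbitrary sequence $\{r_m\}$, and managing the multiplicity--$2$ truncation in the counting bookkeeping are the delicate points; the remainder is standard Nevanlinna theory.
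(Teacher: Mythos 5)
Your outer scaffolding --- localize the sharing to each annulus, pay an $O(T(2r))$ penalty for the inner disk, and let the width $\sigma$ absorb that penalty through logarithmic convexity of $T$ --- is indeed how the paper proceeds. But the core of your argument has a genuine gap: the claim that Tran's proof, under ``not M\"obius'', yields a single contraction inequality $T(R_m,f)+T(R_m,g)\le\theta\sum_j(\text{truncated counting})+S(R_m)$ with an absolute $\theta<1$ is not a correct description of the truncated four-value argument, and no such inequality can exist: the shared truncated counting functions can each be comparable to $T$ (this happens when $g$ \emph{is} a M\"obius transform of $f$, a configuration fully consistent with the hypotheses), so a coefficient-$<1$ bound at a single radius would ``prove'' far too much. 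The non-M\"obius alternative cannot enter as a one-line remark that cross-ratios are non-constant; it enters through Fujimoto's torsion-free abelian group machinery, which is what the paper actually runs. Concretely: assuming $f\not\equiv g$, one sets $h_i=\langle{\bf f},{\bf a}_i\rangle/\langle{\bf g},{\bf a}_i\rangle$ and $h_I=h_{i_1}h_{i_2}$ for the six $2$-element subsets $I\subset\{1,2,3,4\}$, takes a \emph{minimal} linear relation $h_{I_0}=\sum_{i=1}^{\tau}b_ih_{I_i}$, and, if $\tau\ge 2$, builds from it a linearly non-degenerate auxiliary curve $h:\mathbb{C}\to\mathcal{P}^{\tau-1}(\mathbb{C})$. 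The level-$2$ sharing on the annuli forces every zero and pole of each $h_i$ there to sit at an $a_i$-point of multiplicity $>1$; these are controlled by (\ref{eqn00n-10}), giving the upper bound $T(r^\sigma,h)\le 12(\tau^2-1)T(2r)+o(T(r^\sigma))$ of (\ref{eqn00n-14}), while $h_I/h_J=1$ on the shared preimages gives the lower bound $T(r^\sigma,h)+\frac{2}{3}T(2r)\ge\frac{1}{3}T(r^\sigma)+o(T(r^\sigma))$ of (\ref{eqn00n-15}). Comparing these and invoking $T(r^\sigma)\ge\frac{\sigma\log r}{\log 2r}T(2r)+(1-\sigma)T(1)$ yields $\frac{\sigma}{3}\le 12(\tau^2-1)+\frac{2}{3}$; since $\tau\le\binom{4}{2}-1=5$ this forces $\sigma\le 866$, contradicting $\sigma>866$. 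Hence $\tau=1$ for every $I$, the family $\{[h_i]\}$ has property $(P_{4,2})$ in $\mathcal{M}^*/\mathbb{C}^*$, and Lemma \ref{lem4.1} produces $h_{i_1}/h_{i_2}\in\mathbb{C}^*$, which for $n=1$ is exactly the M\"obius relation.

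Because your proposal leaves all of this inside a black box, it cannot produce, or even verify, the threshold: $866=3\cdot 12\cdot(5^2-1)+2$ arises from the dichotomy over the relation length $\tau$ and the explicit constants in (\ref{eqn00n-10}) and (\ref{eqn00n-15}), not from a condition of the shape $\theta+c/\sigma<1$. In your scheme $\sigma$ enters as a small additive perturbation of a fixed contraction constant; in the actual proof $\sigma$ appears \emph{linearly} against $12(\tau^2-1)+\frac{2}{3}$, which is precisely why the theorem needs $\sigma$ as large as $866$ rather than merely above some modest bound. A secondary inaccuracy in the same direction: the quantities that must be localized to the annuli are not the $a_j$-counting functions of $f$ and $g$ alone, but the zero-counting functions of the global auxiliary functions $\tilde h h_{I''_i}$ attached to the minimal relation, and it is the multiplicity-$>1$ observation for the $h_i$ --- not the raw sharing --- that traps their zeros in the annuli.
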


\begin{thm}\label{thm1.11}\ Let $f$ and $g$ be two transcendental meromorphic functions and let $\{r_m\}$ be a sequence of
P\'olya peak of $T(r)$ with the order
$0<\lambda<+\infty.$ If $0<\kappa<1$ and
$\kappa^\lambda<\frac{1}{866}$, then $g$ is a M\"obius transformation of $f$, if
$$
\min\{\nu_{\langle{\bf f},{\bf a}_j\rangle}(z),2\}=\min\{\nu_{\langle{\bf g},{\bf a}_j\rangle}(z),2\}\ \mbox{on}\ \bigcup_{m=1}^\infty A(\kappa r_m,r_m)\ \mbox{for}\ j=1,2,3,4;
$$ Let $\{\kappa_m\}$ be a sequence of
positive numbers
 tending to $0$. Then $g$ is a M\"obius transformation of $f$, if
$$
\min\{\nu_{\langle{\bf f},{\bf a}_j\rangle}(z),2\}=\min\{\nu_{\langle{\bf g},{\bf a}_j\rangle}(z),2\}\ \mbox{on}\ \bigcup_{m=1}^\infty A(\kappa_m r_m,r_m)\ \mbox{for}\ j=1,2,3,4.
$$
\end{thm}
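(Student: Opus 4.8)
The plan is to realize Theorem~\ref{thm1.11} as the narrow-annulus, P\'olya-peak analogue of the wide-annulus Theorem~\ref{thm1.10}, exactly as Theorem~\ref{thm1.9} stands to Theorem~\ref{thm1.6} in the five-value setting. The global input is Tran's four-value theorem with $2$-truncated multiplicities (Theorem~1.4 of \cite{Tan}): if $f$ and $g$ share $a_1,\dots,a_4$ with $\min\{\nu,2\}$ truncation on all of $\mathbb{C}$, then $g$ is a M\"obius transformation of $f$. Its proof builds an auxiliary function $\Phi$ out of $f$, $g$ and the $a_j$ that degenerates precisely when the M\"obius relation holds, and whose poles occur only where the $2$-truncated $a_j$-divisors of $f$ and $g$ disagree. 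The correspondence $1/\sigma\leftrightarrow\kappa^\lambda$ that already links Theorems~\ref{thm1.6} and \ref{thm1.9} matches the threshold $\sigma>866$ of Theorem~\ref{thm1.10} with the present $\kappa^\lambda<1/866$.

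The geometric heart of the argument is that, evaluated at a P\'olya-peak radius $r_m$, every point of $B(r_m)$ at which the $2$-truncated $a_j$-divisors of $f$ and $g$ disagree already lies in $B(\kappa r_m)$. Indeed $A(\kappa r_m,r_m)$ lies in the sharing set and $B(r_m)\setminus A(\kappa r_m,r_m)=B(\kappa r_m)$, so the divisors can differ inside $B(r_m)$ only within $B(\kappa r_m)$. Since $\{r_m\}$ is a P\'olya peak of order $\lambda$, for fixed $\kappa\in(0,1)$ and all large $m$ we have $T(\kappa r_m)\le(\kappa^\lambda+o(1))\,T(r_m)$, whence by the First Main Theorem the total disagreement count up to radius $r_m$ obeys, for an absolute constant $C$,
$$\sum_{j=1}^4\bar N_{\mathrm{nonshared}}(r_m,a_j)\le C\,T(\kappa r_m)+O(1)\le\big(C\kappa^\lambda+o(1)\big)\,T(r_m).$$

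Next I would feed this into Tran's machinery. Suppose $g$ is not a M\"obius transformation of $f$; then $\Phi$ is non-degenerate, and Tran's estimates give a lower bound $T(r,\Phi)\ge c_1\big(T(r,f)+T(r,g)\big)+O(1)$ together with an upper bound $T(r,\Phi)\le c_2\sum_{j}\bar N_{\mathrm{nonshared}}(r,a_j)+S(r,f)+S(r,g)$, since the poles of $\Phi$ sit only at disagreement points and its proximity term is a remainder. Evaluating along $r=r_m$ and inserting the bound above yields $c_1\,T(r_m)\le Cc_2\kappa^\lambda\,T(r_m)+o(T(r_m))$; the value $866$ is precisely the combination $Cc_2/c_1$ (up to the routine normalisation in Tran's proof), so $\kappa^\lambda<1/866$ makes the right-hand coefficient strictly smaller than $c_1$ and contradicts $T(r_m)\to\infty$. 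Hence $\Phi$ degenerates and $g$ is a M\"obius transformation of $f$. The second assertion follows at once: $\kappa_m\to0$ forces $\kappa_m^\lambda\to0<1/866$ for all large $m$, so the tail of the annuli already satisfies the hypothesis of the first part, and discarding finitely many annuli cannot affect a conclusion about the behaviour as $r\to\infty$.

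The main obstacle will be the precise identification of $\Phi$ and the clean coefficient split it produces, not the geometry. One must confirm that $\Phi$ can be chosen so that all its poles genuinely lie at disagreement points---this is where $\min\{\cdot,2\}$ truncation, rather than bare $\mathrm{IM}$ sharing, is indispensable---that its characteristic is bounded below by a fixed multiple of $T(r,f)+T(r,g)$ when non-degenerate, and that $S(r,f)+S(r,g)$ is genuinely $o(T(r_m))$ along the P\'olya-peak sequence rather than merely off an exceptional set of radii. Tracking the numerical value $866$ through the truncated Second Main Theorem and the logarithmic-derivative estimates, and checking that it is not degraded in passing from the plane to the annulus, is the delicate bookkeeping; the rest is a transcription of the finite-order arguments already used for Theorems~\ref{thm1.3} and \ref{thm1.9}.
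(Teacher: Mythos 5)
Your reduction is aimed at the right target: Theorem \ref{thm1.11} is indeed meant to be the P\'olya-peak transcription of the wide-annulus Theorem \ref{thm1.10}, with the logarithmic-convexity step $T(2r)\le\left(\frac{1}{\sigma}+o(1)\right)T(r^\sigma)$ replaced by the peak inequality $T(\kappa r_m)\le(1+\varepsilon_m)\kappa^\lambda T(r_m)$, and your disposal of the second assertion ($\kappa_m\to 0$, pass to the tail, a subsequence of a P\'olya peak sequence is again a P\'olya peak sequence) is fine. But the engine you propose does not match what is needed and cannot produce the stated constant. The paper does not run Tran's theorem as a black box through a single auxiliary function $\Phi$ with constants $c_1,c_2$; no such $\Phi$ with the properties you postulate is exhibited anywhere, and Tran's result on $\mathbb{C}$ gives no propagation statement. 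The actual argument (proof of Theorem \ref{thm1.10}, which you must redo on the narrow annuli) is Si's method: set $h_i=\langle{\bf f},{\bf a}_i\rangle/\langle{\bf g},{\bf a}_i\rangle$, form the six products $h_I$ over pairs $I\subset\{1,2,3,4\}$, take a minimal relation $h_{I_0}=\sum_{i=1}^{\tau}b_ih_{I_i}$ (so $\tau\le 5$), build the linearly non-degenerate curve $h$ into $\mathcal{P}^{\tau-1}(\mathbb{C})$, and apply Cartan's second main theorem to $h$. The truncation hypothesis forces every zero or pole of $h_i$ inside the sharing annuli to have $\nu_{\langle{\bf f},{\bf a}_i\rangle}>1$ and $\nu_{\langle{\bf g},{\bf a}_i\rangle}>1$, which yields the upper bound $T(r_m,h)\le 12(\tau^2-1)T(\kappa r_m)+o(T(r_m))$, while the identity $\frac{h_I}{h_J}=1$ on the shared preimages (automatic for $n=1$) yields $T(r_m,h)+\frac{2}{3}T(\kappa r_m)\ge\frac{1}{3}T(r_m)+o(T(r_m))$. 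Combining and inserting the peak inequality gives $\frac{1}{3}\le\left(12(\tau^2-1)+\frac{2}{3}\right)\kappa^\lambda$, impossible for $\tau\ge 2$ when $\kappa^\lambda<\frac{1}{866}=\left(3\left(12\cdot 24+\frac{2}{3}\right)\right)^{-1}$; then $\tau=1$, property $(P_{4,2})$ holds, and Lemma \ref{lem4.1} gives $h_{i_1}=c\,h_{i_2}$, which for $n=1$ is exactly the M\"obius relation. The number $866$ is this explicit combination with $\tau\le 5$; your ``$866=Cc_2/c_1$'' is asserted, not derived, and is precisely the content you would have to supply.

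There is also a genuine quantitative error in your ``geometric heart.'' The bound $\sum_{j=1}^4\bar N_{\mathrm{nonshared}}(r_m,a_j)\le C\,T(\kappa r_m)+O(1)$ is false as stated: a disagreement point $z_0\in B(\kappa r_m)$ contributes $\log(r_m/|z_0|)$ to the integrated counting function at $r_m$, so
$$\bar N_{\mathrm{nonshared}}(r_m)=\bar N_{\mathrm{nonshared}}(\kappa r_m)+\bar n_{\mathrm{nonshared}}(\kappa r_m)\log\frac{1}{\kappa},$$
and converting $\bar n(\kappa r_m)$ back into a characteristic costs either a factor $\log(1/\kappa)$ or a strictly larger radius, producing at best a bound of order $\kappa^{\lambda}\log(1/\kappa)\,T(r_m)$ (or $\kappa^{\lambda/2}T(r_m)$). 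Since $\kappa^\lambda\log(1/\kappa)$ is unbounded as $\lambda\to 0$ with $\kappa^\lambda$ held near $\frac{1}{866}$, this loss destroys the sharp threshold the theorem asserts. The paper's proof never forms such full counting functions of the nonshared set: all divisor inequalities coming from the $P_i$ are kept in difference form $N(r_m,\cdot)-N(\kappa r_m,\cdot)$, so the unknown inner contribution enters only through terms $N^{1)}(\kappa r_m,a_j,f_l)\le T(\kappa r_m)+O(1)$, to which the peak inequality applies with constant exactly $\kappa^\lambda$ and no logarithmic correction. Finally, remember to choose the evaluation radii (as in the proofs of Theorems \ref{thm1.2} and \ref{thm1.10}) inside the interval where the peak property 2) holds and outside the finite-linear-measure exceptional set of the finitely many second main theorems used; this is routine but must be said.
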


We remark on that corresponding to the most recent study of
uniqueness of holomorphic curves, e.g., in \cite{HaLeSi},
\cite{DethloffSyTan}, \cite{Si}, \cite{SiLe}, \cite{CaoCao} and
\cite{Si1}, one can establish the similar results on the propagation sets.

We will prove Theorems \ref{thm1.1} and \ref{thm1.2} in Section 3, Theorems \ref{thm1.4}
and \ref{thm1.6} in Section 4 and Theorems \ref{thm1.5}, \ref{thm1.8} and \ref{thm1.10} in Section 5.


\section{Preliminaries}

Let us begin with the Nevanlinna characteristic of a holomorphic
curve on an angular domain. For $0\leq\alpha<\beta\leq 2\pi$, by
$\Omega(\alpha,\beta)$ we denote the angular domain
$$\Omega(\alpha,\beta):=\{z:\ \alpha<\arg z<\beta\}$$
and $\overline{\Omega}(\alpha,\beta)$ is the closure of
$\Omega(\alpha,\beta)$. Without occurrence of any confusion in the
context, we simply write $\Omega$ for $\Omega(\alpha,\beta)$.
Associated to $\Omega(\alpha,\beta)$ is the quantity
$\omega=\frac{\pi}{\beta-\alpha}.$

Let $f=[f_0:f_1:...:f_n]$ be a holomorphic curve on
$\overline{\Omega}$ and ${\bf{f}}=(f_0,f_1,...,f_n)$ is a reduced
representation of $f$. Set
$$v_f(z):=\bigvee_{j=0}^n\log |f_j(z)|,\ z\in\Omega.$$
Obviously, $v_f(z)$ is subharmonic on $\Omega$. Let $\Delta$ be the
Laplacian. Define the Nevanlinna's characteristic of $f$ on $\Omega$
as
$$S_\Omega(r,f):=\frac{1}{2\pi}\int_1^r\int_\alpha^\beta\left(\frac{1}{t^\omega}-\frac{t^\omega}{r^{2\omega}}\right)
\sin\omega(\theta-\alpha)\Delta v_f(te^{i\theta}).$$ Sometimes, we
also write $S_{\alpha,\beta}(r,f)$ for $S_\Omega(r,f)$.

Set $u_{\bf a}:=\log|\langle{\bf f,a}\rangle|$. Define the counting function of
$f$ with respect to ${\bf a}$ for $\Omega$ as
\begin{eqnarray*}
C_\Omega(r; {\bf
a},f)&=&\frac{1}{2\pi}\int_1^r\int_\alpha^\beta\left(\frac{1}{t^\omega}-\frac{t^\omega}{r^{2\omega}}\right)
\sin\omega(\theta-\alpha)\Delta
u_{\bf a}(te^{i\theta})\\
&=&\sum_{k}\left(\frac{1}{r_k^\omega}-\frac{r_k^\omega}{r^{2\omega}}\right)\sin \omega(\theta_k-\alpha),
\end{eqnarray*}
where $z_k=r_ke^{i\theta_k}$ is a zero of $\langle{\bf f,a}\rangle$ on
$\Omega(r):=B(r)\cap\Omega$, counted with its multiplicities. By
$C^{s)}_{\Omega}(r; {\bf a},f)$ we denote the counting function in
which zero of $\langle{\bf f,a}\rangle$ with multiplicity $p$ is counted by
$\min\{s,p\}$ times. For $r<R$, $C_\Omega(r, R; {\bf a},f)$ is the
counting function for zeros of $\langle{\bf f,a}\rangle$ on $A(r, R)\cap\Omega$.

Set $||{\bf f}||=(|f_0|^2+|f_1|^2+...+|f_n|^2)^{1/2}$ and $||{\bf
a}||=(|a_0|^2+|a_1|^2+...+|a_n|^2)^{1/2}$. The Weil function of $f$
with respect to the hyperplane $H$ with a reduced representation
${\bf a}$ is
$$\lambda_H(f(z)):=\log\frac{||{\bf f}(z)||||{\bf a}||}{|\langle{\bf f}(z),{\bf a}\rangle|}.$$
Define the proximity functions of $f$ for the hyperplane ${\bf a}$
on $\Omega$ by
$$A_\Omega(r; {\bf a},f):=\frac{1}{2\pi}\int_{\Gamma'_r}\lambda_H(f(\zeta))\frac{\partial
w_r}{\partial\bf{n}}{\rm d}s,$$
$$B_\Omega(r; {\bf a},f):=\frac{1}{2\pi}\int_{\Gamma''_r}\lambda_H(f(\zeta))\frac{\partial
w_r}{\partial\bf{n}}{\rm d}s,$$ where $w_r(z)=-{\rm
Im}\left(\frac{1}{(e^{-i\alpha}z)^\omega}+\frac{(e^{-i\alpha}z)^\omega}{r^{2\omega}}\right)$,
$\Gamma''_r=\{re^{i\theta}:\alpha\leq\theta\leq\beta\}\cup\{e^{i\theta}:\alpha\leq\theta\leq\beta\}$
and $\Gamma'_r=\partial\Omega(r)\setminus\Gamma''_r$, and
$$S_\Omega (r; {\bf a},f):=A_\Omega (r; {\bf a},f)+B_\Omega(r; {\bf a},f)+C_\Omega(r; {\bf
a},f).$$

In \cite{Zheng1}, we obtain
\begin{equation}\label{2.1}S_{\alpha,\beta}(r,f)=S_{\alpha,\beta}(r; {\bf a},f)+O(1)\ (r\to\infty)\end{equation}
and establish the following second main theorem for the Nevanlinna
characteristic on an angular domain.

\begin{thm}\label{thm2.1}\ Let $\Omega=\Omega(\alpha,\beta)$ with $0<\beta-\alpha<2\pi$ and $f$ be a
non-degenerate holomorphic curve with a reduced representation ${\bf
f}=(f_0,f_1,...,f_n)$. Let ${\bf a}_1,{\bf a}_2,...,{\bf a}_q$ be $q$ hyperplanes
in general position. Then we have
\begin{eqnarray}\label{2.2}(q-n-1)S_{\alpha,\beta}(r,f)&\leq &
\sum_{k=1}^qC_{\alpha,\beta}(r; {\bf a}_k,f)-C_{\alpha,\beta}(r;
0,W)+R_{\alpha,\beta}(r,f)\nonumber\\
&\leq &\sum_{k=1}^qC^{n)}_{\alpha,\beta}(r; {\bf
a}_k,f)+R_{\alpha,\beta}(r,f),\end{eqnarray} where $W$ is the
Wronskian of $f_0,f_1,...,f_n$ and $R_{\alpha,\beta}(r,f)$ is called
the error term with the estimate
\begin{equation}\label{2.3}R_{\alpha,\beta}(r,f)\leq
K\omega(\log^+T(r,f)+\log^+r+1),\end{equation} for all $r>1$ but
possibly a set of $r$ with finite linear measure, where $K$ is a
constant independent of $\omega$.
\end{thm}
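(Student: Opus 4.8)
The plan is to transplant the classical proof of Cartan's Second Main Theorem to the sector $\Omega=\Omega(\alpha,\beta)$, replacing circular averaging by the angular operators $A_\Omega,B_\Omega,C_\Omega$ and the planar logarithmic derivative lemma by its angular analogue. Everything is anchored to the First Main Theorem identity \eqref{2.1}, which gives $S_{\alpha,\beta}(r,f)=A_\Omega(r;\mathbf{a}_k,f)+B_\Omega(r;\mathbf{a}_k,f)+C_\Omega(r;\mathbf{a}_k,f)+O(1)$ for each $k$; summing over $k$ yields
$$\sum_{k=1}^q\bigl(A_\Omega(r;\mathbf{a}_k,f)+B_\Omega(r;\mathbf{a}_k,f)\bigr)=qS_{\alpha,\beta}(r,f)-\sum_{k=1}^qC_{\alpha,\beta}(r;\mathbf{a}_k,f)+O(1),$$
so all three characteristic functions appearing in \eqref{2.2} are linked in advance.

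First I would record the purely local Cartan inequality, which is identical to the planar case and ignores the angular structure. Writing $g_k=\langle\mathbf{f},\mathbf{a}_k\rangle$ and letting $W$ be the Wronskian of $f_0,\dots,f_n$, one fixes $z\in\Omega$, lets $K(z)$ be the set of $n+1$ indices with smallest $|g_k(z)|$, and uses the linear independence (general position) of $\{\mathbf{a}_k\}_{k\in K(z)}$ to replace $W$ by the Wronskian $W_{K(z)}$ of the forms $\{g_k\}_{k\in K(z)}$. This produces the pointwise bound
$$\sum_{k=1}^q\lambda_{H_k}(f(z))\le (n+1)\log\|\mathbf{f}(z)\|-\log|W(z)|+\log^+\Bigl(\frac{|W_{K(z)}|}{\prod_{k\in K(z)}|g_k(z)|}\Bigr)+O(1),$$
where, after elementary row operations, the last $\log^+$ term is an expression in the logarithmic derivatives $g_k^{(j)}/g_k$ of order $j\le n$.

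Next I would apply the angular operators term by term. The left-hand side integrates to $\sum_k(A_\Omega+B_\Omega)$, which by the summed identity equals $qS_{\alpha,\beta}(r,f)-\sum_kC_{\alpha,\beta}(r;\mathbf{a}_k,f)+O(1)$; the term $(n+1)\log\|\mathbf{f}\|$ integrates to $(n+1)S_{\alpha,\beta}(r,f)+O(1)$, and the angular Carleman--Jensen formula applied to the entire function $W$ turns $-\log|W|$ into $-C_{\alpha,\beta}(r;0,W)$ plus boundary proximity terms. Collecting the angular proximity of all the logarithmic-derivative contributions into $R_{\alpha,\beta}(r,f)$ and rearranging gives exactly the first inequality of \eqref{2.2}. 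The second inequality is the Wronskian truncation: in general position the pointwise divisor bound $\nu_W(z)\ge\sum_k\max\{\nu_{g_k}(z)-n,0\}$ holds, and integrating it against the nonnegative angular counting kernel yields $\sum_kC_{\alpha,\beta}(r;\mathbf{a}_k,f)-C_{\alpha,\beta}(r;0,W)\le\sum_kC^{n)}_{\alpha,\beta}(r;\mathbf{a}_k,f)$.

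The main obstacle is the estimate \eqref{2.3} for $R_{\alpha,\beta}(r,f)$, i.e.\ controlling the angular proximity of the logarithmic derivatives. On the whole plane this is the logarithmic derivative lemma combined with a Borel-type growth lemma; on the sector one must instead invoke the angular logarithmic derivative lemma (the first-order case being the technical core of \cite{Zheng1}), whose bound carries the factor $\omega=\pi/(\beta-\alpha)$ and holds for all $r>1$ outside a set of finite linear measure. The delicate points are to iterate the first-order estimate up to order $n$ so as to control the Wronskian quotients $W_{K(z)}/\prod_{k\in K(z)}g_k$, and to amalgamate the finitely many exceptional $r$-sets --- one for each form $g_k$ and each differentiation order --- into a single set of finite measure while keeping the constant $K$ independent of $\omega$.
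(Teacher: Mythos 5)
Your proposal reconstructs what is, in effect, the paper's proof: the paper does not prove Theorem \ref{thm2.1} in-text at all (it is imported from \cite{Zheng1}), and your blueprint --- the pointwise Cartan inequality obtained by comparing $W$ with the Wronskian $W_{K(z)}$ of the $n+1$ locally smallest forms, term-by-term angular integration anchored to the first main theorem \eqref{2.1} together with the Carleman--Nevanlinna formula applied to $\log|W|$, the pointwise divisor inequality $\nu_W(z)\ge\sum_k\max\{\nu_{\langle{\bf f},{\bf a}_k\rangle}(z)-n,0\}$ integrated against the nonnegative counting kernel for the truncated bound, and the angular logarithmic derivative lemma for \eqref{2.3} --- is exactly the argument of the cited reference. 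The outline is correct; the one piece it necessarily defers rather than proves is the angular logarithmic derivative lemma iterated up to order $n$ with the factor $\omega$, a single finite-measure exceptional set, and $K$ independent of $\omega$, which you rightly flag as the technical core and which is precisely the content supplied by \cite{Zheng1}.
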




Let $\mu_f$ be the Riesz measure of $v_f(z)$. In fact,
$\mu_f=\frac{1}{2\pi}\Delta v_f$. Define
$$\mathcal{A}(r,\Omega, f)=\mu_f(\overline{B(r)\cap\Omega})$$
and
$$\mathcal{T}(r,\Omega,f)=\int_0^r\frac{\mathcal{A}(t,\Omega,f)}{t}\mathrm{d}t.$$
$\mathcal{A}(r,\Omega,f)$ is called the unintegrated Ahlfors-Shimizu
characteristic of holomorphic curve $f$ on $\Omega$ and
$\mathcal{T}(r,\Omega,f)$ is called the Ahlfors-Shimizu
characteristic of holomorphic curve $f$ on $\Omega$. Then
$T(r,f)=\mathcal{T}(r,\mathbb{C},f).$ In the sequel, we simply write
$\mathcal{A}(r,f)$ and $\mathcal{T}(r,f)$ for
$\mathcal{A}(r,\mathbb{C},f)$ and $\mathcal{T}(r,\mathbb{C},f)$.

$u_{\bf{a}}$ is subharmonic on $\Omega$ for a hyperplane ${\bf a}$.
By $\mu^{\bf a}$ we denote the Riesz measure of $u_{\bf{a}}$. Set
$$n_\Omega(r,{\bf{a}},f)=\mu^{\bf{a}}(\overline{B(r)\cap\Omega}),$$
and hence $n_\Omega(r,{\bf{a}},f)$ is the number of zeros of $\langle{\bf
f}(z),{\bf a}\rangle$ in $B(r)\cap\Omega.$ Define
$$N_\Omega(r,{\bf{a}},f)=\int_1^r\frac{n_\Omega(t,{\bf{a}},f)}{t}\mathrm{d}t.$$

In \cite{Zheng1}, we establish the second main theorem for
Ahlfors-Shimizu characteristic and applying this main theorem we
confirm the existence of Borel directions and $T$-directions. In
view of the same argument as in \cite{Zheng}, we can compare
$N_\Omega(r,{\bf{a}},f)$ to $C_\Omega(r;{\bf{a}},f)$ and
$\mathcal{T}(r,\Omega,f)$ to $S_{\Omega}(r,f)$.

\begin{lem}\label{lem2.1}\ Let $f$ be a holomorphic curve and ${\bf a}$ a hyperplane. Then
$$C_{\Omega}(r;{\bf a},f)\geq \omega\sin(\omega\varepsilon)\frac{N_{\Omega_\varepsilon}(r,{\bf a},f)}{r^\omega}$$
for $\varepsilon>0$, where $\Omega_{\varepsilon}=\Omega(\alpha+\varepsilon,\beta-\varepsilon)$; and
$$C_\Omega(r; {\bf{a}},f)\leq 2\omega\frac{N_\Omega(r,{\bf{a}},f)}{r^\omega}+\omega^2\int_1^r
\frac{N_\Omega(t,{\bf{a}},f)}{t^{\omega+1}}{\rm d}t.$$
\end{lem}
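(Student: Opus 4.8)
The plan is to treat both estimates as consequences of rewriting $C_\Omega(r;{\bf a},f)$ as a Stieltjes integral of the radial--angular weight against the zero-counting measure of $\langle{\bf f},{\bf a}\rangle$, and then transferring the radial weight onto the integrated counting functions by integration by parts. Write the zeros of $\langle{\bf f},{\bf a}\rangle$ in $B(r)\cap\Omega$ as $z_k=r_ke^{i\theta_k}$ (with multiplicity), set $G_r(t):=t^{-\omega}-t^\omega r^{-2\omega}$, and note that $G_r(r)=0$, $G_r(t)\ge0$ for $1\le t\le r$, and $G_r'(t)=-\omega(t^{-\omega-1}+t^{\omega-1}r^{-2\omega})$. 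Since $\alpha\le\theta_k\le\beta$ forces $0\le\sin\omega(\theta_k-\alpha)\le1$, every summand in $C_\Omega(r;{\bf a},f)=\sum_k G_r(r_k)\sin\omega(\theta_k-\alpha)$ is nonnegative, which is what makes both one-sided comparisons possible.

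For the lower bound I would keep only the zeros lying in $\Omega_\varepsilon=\Omega(\alpha+\varepsilon,\beta-\varepsilon)$; for these, $\omega\varepsilon\le\omega(\theta_k-\alpha)\le\pi-\omega\varepsilon$, so $\sin\omega(\theta_k-\alpha)\ge\sin(\omega\varepsilon)$, and by nonnegativity of the discarded terms
\[
C_\Omega(r;{\bf a},f)\ \ge\ \sin(\omega\varepsilon)\int_1^r G_r(t)\,dn_{\Omega_\varepsilon}(t).
\]
Integrating by parts (the boundary terms vanish because $G_r(r)=0$ and, after normalizing so that $\langle{\bf f},{\bf a}\rangle$ has no zeros on $\overline{B(1)}$, $n_{\Omega_\varepsilon}(1)=0$) gives $\omega\int_1^r n_{\Omega_\varepsilon}(t)(t^{-\omega-1}+t^{\omega-1}r^{-2\omega})\,dt$. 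Dropping the second, nonnegative term and using the elementary inequality $t^{-\omega-1}\ge r^{-\omega}t^{-1}$ valid for $t\le r$, the right-hand side is at least $\omega r^{-\omega}\int_1^r n_{\Omega_\varepsilon}(t)t^{-1}\,dt=\omega r^{-\omega}N_{\Omega_\varepsilon}(r,{\bf a},f)$, which is the claimed lower estimate.

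For the upper bound I would use $\sin\omega(\theta_k-\alpha)\le1$ together with the full kernel, so that $C_\Omega(r;{\bf a},f)\le\int_1^r G_r(t)\,dn_\Omega(t)$. One integration by parts (again $G_r(r)=0$, the boundary term at $1$ having the right sign to discard) yields $\omega\int_1^r n_\Omega(t)t^{-\omega-1}\,dt+\omega r^{-2\omega}\int_1^r n_\Omega(t)t^{\omega-1}\,dt$. The delicate point is to estimate the two pieces so that the announced constants $2\omega$ and $\omega^2$ come out exactly: integrating the first piece by parts once more (with $N_\Omega(1)=0$) gives precisely $\omega r^{-\omega}N_\Omega(r)+\omega^2\int_1^r N_\Omega(t)t^{-\omega-1}\,dt$, while for the second piece I would substitute $n_\Omega(t)\,dt=t\,dN_\Omega(t)$, integrate by parts, and \emph{discard} the resulting negative term $-\omega^2 r^{-2\omega}\int_1^r t^{\omega-1}N_\Omega(t)\,dt$ to obtain the clean bound $\omega r^{-\omega}N_\Omega(r)$. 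Adding the two pieces produces $2\omega r^{-\omega}N_\Omega(r)+\omega^2\int_1^r N_\Omega(t)t^{-\omega-1}\,dt$.

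The main obstacle is bookkeeping rather than conceptual: the two terms of the kernel $G_r$ must be handled by two different integrations by parts, and the factor $2$ in the first coefficient versus the single $\omega^2$ in the second arises precisely from discarding the right negative term in the second piece — a cruder bound such as $t^{\omega-1}r^{-2\omega}\le t^{-\omega-1}$ on $[1,r]$ would double the second coefficient. The only genuinely subtle issue is the boundary contribution at $t=1$: the summation defining $C_\Omega$ ignores zeros inside $B(1)$ whereas $n_\Omega$ counts them, so the identity $N_\Omega(1)=0$ and the vanishing of the lower boundary terms should be justified by the standard normalization (or absorbed into an $O(\log r)$ term as in \cite{Zheng}).
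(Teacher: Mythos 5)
Your proposal is correct and takes essentially the same route as the paper, which proves Lemma \ref{lem2.1} only by appeal to ``the same argument as in \cite{Zheng}'' --- namely exactly your Riemann--Stieltjes computation: write $C_\Omega(r;{\bf a},f)$ as an integral of the kernel $t^{-\omega}-t^{\omega}r^{-2\omega}$ against $dn_\Omega$, use $\sin\omega(\theta_k-\alpha)\ge\sin(\omega\varepsilon)$ on $\Omega_\varepsilon$ (resp.\ $\le 1$) and integrate by parts, with the two kernel pieces handled separately so that the constants $2\omega$ and $\omega^2$ come out exactly. Your closing caveat about zeros in $\overline{B(1)}$ (disposed of by the standard normalization $n_\Omega(1)=0$, or absorbed into an $O(\log r/r^\omega)$ term) correctly identifies and resolves the only boundary subtlety in the argument.
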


For $q$ holomorphic curves $f_i$ with a reduced representation ${\bf
f}_i\ (i=1,2,...,q)$, if ${\bf f}_1\wedge{\bf
f}_2\wedge\cdots\wedge{\bf f}_q\not\equiv 0$, then the divisor
$\nu_{f_1\wedge f_2\wedge\cdots\wedge f_q}\geq 0$ associated with
${\bf f}_1\wedge{\bf f}_2\wedge\cdots\wedge{\bf f}_q$ exists.
Obviously $\nu_{f_1\wedge f_2\wedge\cdots\wedge f_q}$ is independent
of the choice of the reduced representation ${\bf f}_i$ of $f_i$. We
write $C_\Omega(r;\nu_{f_1\wedge f_2\wedge\cdots\wedge f_q})$ and
$N_\Omega(r,\nu_{f_1\wedge f_2\wedge\cdots\wedge f_q})$ for the
corresponding counting functions.

The following is the first main theorem for the wedge product.

\begin{lem}\label{lem2.2}\ Then
\begin{equation}\label{2.4}N(r,\nu_{f_1\wedge f_2\wedge\cdots\wedge
f_q})\leq\sum_{j=1}^q T(r, f_j)+O(1).
\end{equation}
For an angular domain $\Omega$, we have
\begin{equation}\label{2.5}C_\Omega(r;\nu_{f_1\wedge f_2\wedge\cdots\wedge f_q})\leq
\sum_{j=1}^qS_\Omega(r, f_j)+O(1).\end{equation}\end{lem}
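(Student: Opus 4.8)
The plan is to reduce both inequalities to the ordinary First Main Theorem (or the elementary fact that a counting function of zeros of an entire/holomorphic object is bounded by its characteristic) applied to the single holomorphic map obtained by taking the Pl\"ucker coordinates of the wedge product $\mathbf{f}_1\wedge\mathbf{f}_2\wedge\cdots\wedge\mathbf{f}_q$. Concretely, I would first fix reduced representations $\mathbf{f}_i=(f_{i0},\dots,f_{in})$ and observe that $\mathbf{f}_1\wedge\cdots\wedge\mathbf{f}_q$ is the vector whose entries are the $q\times q$ minors $W_I:=\det\bigl(f_{i\,k_j}\bigr)$ indexed by the ordered subsets $I=\{k_1<\cdots<k_q\}\subseteq\{0,1,\dots,n\}$. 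Since $\mathbf{f}_1\wedge\cdots\wedge\mathbf{f}_q\not\equiv 0$, these entries are entire functions not all identically zero, and the divisor $\nu_{f_1\wedge\cdots\wedge f_q}$ is precisely the common-zero divisor $\min_I\nu_{W_I}$, i.e. the zeros of the holomorphic curve $F:=[\,W_I\,]_I$ after removing the common factor. The key algebraic input is the multilinear expansion of each minor as a sum of products $\pm\,f_{1k_{\tau(1)}}\cdots f_{qk_{\tau(q)}}$, so that $|W_I(z)|\le C\prod_{i=1}^q\|\mathbf{f}_i(z)\|$ pointwise, and hence $\|\,\mathbf{f}_1\wedge\cdots\wedge\mathbf{f}_q(z)\|\le C\prod_{i=1}^q\|\mathbf{f}_i(z)\|$.

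From this pointwise estimate I would pass to characteristics. Taking logarithms gives $\log\|\mathbf{f}_1\wedge\cdots\wedge\mathbf{f}_q\|\le\sum_{i=1}^q\log\|\mathbf{f}_i\|+O(1)$, and since $\log\|\mathbf{f}_i\|$ differs from $v_{f_i}=\bigvee_j\log|f_{ij}|$ by a bounded quantity, integrating over the circle $|z|=r$ and subtracting the value at the origin yields
\begin{equation}\label{wedge-char}
T\!\left(r,\textstyle\bigwedge_{i}f_i\right)\le\sum_{i=1}^q T(r,f_i)+O(1).
\end{equation}
Now the zero divisor $\nu_{f_1\wedge\cdots\wedge f_q}$ is the zero divisor of the holomorphic curve $F$, so its integrated counting function obeys the Jensen-type bound $N(r,\nu_{f_1\wedge\cdots\wedge f_q})\le T(r,F)+O(1)$, which is the single-curve First Main Theorem already implicit in the setup (the counting function of zeros is dominated by the Cartan characteristic). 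Combining this with \eqref{wedge-char} gives \eqref{2.4}. Alternatively one writes the Jensen formula for $v_{f_1\wedge\cdots\wedge f_q}$ directly: the Riesz mass of this subharmonic function on $B(r)$ equals $n(r,\nu_{f_1\wedge\cdots\wedge f_q})$, and integrating the Riesz representation recovers $N$ on the left while the circular average supplies $T(r,\bigwedge_i f_i)$ on the right.

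For the angular estimate \eqref{2.5} I would repeat the argument with the angular machinery of Section~2 in place of the global one. The pointwise bound $\log\|\mathbf{f}_1\wedge\cdots\wedge\mathbf{f}_q\|\le\sum_i v_{f_i}+O(1)$ is unchanged, and applying the operator defining $S_\Omega(r,\cdot)$—that is, integrating $\Delta$ of the respective subharmonic functions against the nonnegative kernel $\bigl(t^{-\omega}-t^\omega r^{-2\omega}\bigr)\sin\omega(\theta-\alpha)$—is monotone because that kernel is nonnegative on $\Omega(r)$. Since $C_\Omega(r;\nu_{f_1\wedge\cdots\wedge f_q})$ is exactly $S_\Omega$ applied to the Riesz measure of $v_{f_1\wedge\cdots\wedge f_q}$ and $S_\Omega(r,f_i)=S_\Omega(r,v_{f_i})$ up to $O(1)$, the subadditivity of $v_{f_1\wedge\cdots\wedge f_q}\le\sum_i v_{f_i}+O(1)$ transfers to the characteristics and yields \eqref{2.5}. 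The main obstacle I anticipate is purely bookkeeping rather than conceptual: one must verify that the angular operator respects the inequality despite the subtraction at the origin and the boundary terms $A_\Omega,B_\Omega$ hidden in the identity \eqref{2.1}, i.e. that replacing the genuine subharmonic $v_{f_1\wedge\cdots\wedge f_q}$ by the dominating sum $\sum_i v_{f_i}$ does not reverse sign through those auxiliary terms; this is handled by working with the Riesz-mass (counting) formulation of $C_\Omega$ directly, where only the nonnegative kernel enters and the comparison is manifestly monotone.
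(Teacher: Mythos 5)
Your treatment of \eqref{2.4} is correct and is essentially the ``direct calculation'' the paper itself gestures at: the paper gives no argument beyond citing Stoll, and your route through the Pl\"ucker coordinates $W_I$, the Hadamard-type pointwise bound $\|{\bf f}_1\wedge\cdots\wedge{\bf f}_q\|\le C\prod_{i}\|{\bf f}_i\|$, and Jensen's formula for the common zero factor of the $W_I$ is the standard computation behind that citation. One slip of wording: the Riesz mass of $v_{f_1\wedge\cdots\wedge f_q}$ is \emph{not} equal to $n(r,\nu_{f_1\wedge\cdots\wedge f_q})$, and $C_\Omega(r;\nu_{f_1\wedge\cdots\wedge f_q})$ is \emph{not} ``exactly'' the kernel integral of that Riesz mass; writing $W_I=h\,g_I$ with $h$ the common factor, the Riesz measure of $v_{f_1\wedge\cdots\wedge f_q}$ is the divisor measure of $h$ \emph{plus} the nonnegative Riesz measure of $\bigvee_I\log|g_I|$. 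The inequalities you need all go the right way, so this does not damage \eqref{2.4}.

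The angular inequality \eqref{2.5}, however, has a genuine gap, located exactly at the point you flagged and then dismissed as bookkeeping. You argue that since the kernel $\bigl(t^{-\omega}-t^{\omega}r^{-2\omega}\bigr)\sin\omega(\theta-\alpha)$ is nonnegative, the operator $u\mapsto\iint K\,\Delta u$ is monotone, so that $v_{f_1\wedge\cdots\wedge f_q}\le\sum_i v_{f_i}+O(1)$ ``transfers to the characteristics''; and your proposed repair is to compare Riesz masses directly, calling that comparison ``manifestly monotone.'' But the operator is monotone in the \emph{measure} $\Delta u$, not in the \emph{function} $u$: $u_1\le u_2$ pointwise does not give $\Delta u_1\le\Delta u_2$ as measures. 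Concretely, for $n=1$, $q=2$, take ${\bf f}_1=(1,z)$, ${\bf f}_2=(1,2z)$; then ${\bf f}_1\wedge{\bf f}_2=2z-z=z$, so $\Delta v_{f_1\wedge f_2}$ is an atom of mass $2\pi$ at the origin, while $\Delta v_{f_1}$ and $\Delta v_{f_2}$ are the uniform measures on the circles $|z|=1$ and $|z|=1/2$ and carry no atom there (and one may translate the zero anywhere in $\Omega$). So the Riesz measures are simply incomparable and the ``manifestly monotone'' step fails. The correct mechanism---and what ``the same method as there'' amounts to in the angular setting---is to pass through the boundary representation underlying the decomposition $S_\Omega=A_\Omega+B_\Omega+C_\Omega$ and the identity \eqref{2.1}: by the Green--Carleman formula, the kernel-integrated Riesz mass of a subharmonic $u$ equals the integral of $u$ against $\frac{\partial w_r}{\partial\bf{n}}\ge 0$ over $\Gamma_r'\cup\Gamma_r''$ up to an $O(1)$ term, and monotonicity may legitimately be applied \emph{there}, at the level of the functions themselves. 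Applying this to $v_{f_1\wedge\cdots\wedge f_q}$, whose kernel mass dominates $C_\Omega(r;\nu_{f_1\wedge\cdots\wedge f_q})$ by the decomposition above, and to each $v_{f_i}$, whose kernel mass is $S_\Omega(r,f_i)$ by definition, yields \eqref{2.5}. Your pointwise input is the right one; the transfer must go through the boundary integrals, not through the Riesz measures.
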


(\ref{2.4}) can be found in Stoll \cite{Stoll} and (\ref{2.5}) can
be obtained by the same method as there. In fact, in view of the
definition of the exterior product, Lemma \ref{lem2.2} follows from
the direct calculation.

\begin{lem}\label{lem2.3}
 Assume that (\ref{1.1}) holds on $X=A(r,R)\cap\Omega$. If for $1\leq t\leq q$,
 $X\cap f_t^{-1}(H_i)\cap f_t^{-1}(H_j)=\emptyset (i\not=j)$, then we have
$$\sum_{j=1}^pC^{n)}_\Omega(r,R;H_j,f_t)\leq \frac{n}{q-s+1}C_\Omega(r,R;\nu_{f_1\wedge f_2\wedge\cdots\wedge f_q}).$$
The inequality also holds with $N$ replacing $C$.
\end{lem}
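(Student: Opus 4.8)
The plan is to reduce the asserted inequality to a pointwise inequality between effective divisors on $X=A(r,R)\cap\Omega$ and then integrate. The key observation is that $\sum_{j=1}^p C^{n)}_\Omega(r,R;H_j,f_t)$ and $C_\Omega(r,R;\nu_{f_1\wedge\cdots\wedge f_q})$ are built from one and the same non-negative kernel: both are sums over the zeros $z_k=r_ke^{i\theta_k}$ lying in $A(r,R)\cap\Omega$ of the weight $\left(\frac{1}{r_k^\omega}-\frac{r_k^\omega}{R^{2\omega}}\right)\sin\omega(\theta_k-\alpha)$, which is $\ge 0$ there, the first applied to the divisor $\sum_{j=1}^p\min\{\nu_{\langle{\bf f}_t,{\bf a}_j\rangle},n\}$ and the second to $\nu_{f_1\wedge\cdots\wedge f_q}$. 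Hence it suffices to prove, for every $z\in X$,
\begin{equation*}
\sum_{j=1}^p\min\{\nu_{\langle{\bf f}_t,{\bf a}_j\rangle}(z),n\}\le\frac{n}{q-s+1}\,\nu_{f_1\wedge\cdots\wedge f_q}(z),
\end{equation*}
and the verbatim argument with the kernel defining $N_\Omega$ yields the statement with $N$ in place of $C$.

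Fix $z_0\in X$. If $z_0$ is a zero of no $\langle{\bf f}_t,{\bf a}_j\rangle$ the left side is $0$ and there is nothing to prove. Otherwise, the hypothesis $X\cap f_t^{-1}(H_i)\cap f_t^{-1}(H_j)=\emptyset\ (i\ne j)$ forces $z_0$ to lie on exactly one $f_t^{-1}(H_{j_0})$, so the left side equals $\min\{\nu_{\langle{\bf f}_t,{\bf a}_{j_0}\rangle}(z_0),n\}\le n$. Since the curves share the hyperplanes $H_j$ on $X$ (the standing hypothesis of the propagation-set setting), $z_0\in\bigcup_{j=1}^p f_1^{-1}(H_j)\cap X$, so assumption \eqref{1.1} is available at $z_0$: every $s$ of the vectors ${\bf f}_1(z_0),\dots,{\bf f}_q(z_0)$ are linearly dependent. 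Equivalently, $\rho:=\dim_{\mathbb{C}}\operatorname{span}\{{\bf f}_1(z_0),\dots,{\bf f}_q(z_0)\}\le s-1$.

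The heart of the matter is the local estimate $\nu_{f_1\wedge\cdots\wedge f_q}(z_0)\ge q-s+1$. To obtain it, reorder so that ${\bf f}_1(z_0),\dots,{\bf f}_\rho(z_0)$ are independent; for each $i>\rho$ pick constants $\lambda_{il}$ with ${\bf f}_i(z_0)=\sum_{l=1}^\rho\lambda_{il}{\bf f}_l(z_0)$ and set ${\bf g}_i:={\bf f}_i-\sum_{l=1}^\rho\lambda_{il}{\bf f}_l$, a holomorphic vector function with ${\bf g}_i(z_0)=0$, hence vanishing to order $\ge1$ at $z_0$. Subtracting constant multiples of ${\bf f}_1,\dots,{\bf f}_\rho$ from the later vectors leaves the wedge unchanged (a repeated factor kills the corresponding term), so
\begin{equation*}
{\bf f}_1\wedge\cdots\wedge{\bf f}_q={\bf f}_1\wedge\cdots\wedge{\bf f}_\rho\wedge{\bf g}_{\rho+1}\wedge\cdots\wedge{\bf g}_q .
\end{equation*}
Each component of this wedge is a sum of products containing one entry from each of the $q-\rho$ factors ${\bf g}_i$; as all entries of every ${\bf g}_i$ vanish at $z_0$, each such product vanishes there to order $\ge q-\rho\ge q-s+1$. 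Thus $\nu_{f_1\wedge\cdots\wedge f_q}(z_0)\ge q-s+1$, and therefore
\begin{equation*}
\sum_{j=1}^p\min\{\nu_{\langle{\bf f}_t,{\bf a}_j\rangle}(z_0),n\}\le n=\frac{n}{q-s+1}(q-s+1)\le\frac{n}{q-s+1}\,\nu_{f_1\wedge\cdots\wedge f_q}(z_0),
\end{equation*}
which is the required pointwise inequality. Summing against the kernel finishes the proof.

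I expect the genuine subtlety to be bookkeeping rather than computation: one must invoke the sharing hypothesis to guarantee that \eqref{1.1} is available precisely at the zeros of $\langle{\bf f}_t,{\bf a}_j\rangle$, not merely those of $\langle{\bf f}_1,{\bf a}_j\rangle$, and use the disjointness hypothesis to ensure that at most one hyperplane contributes at each point, so the left-hand divisor never exceeds $n$ there. Once these are in place, the truncation at level $n$ is exactly calibrated so that the crude bound $\nu_{f_1\wedge\cdots\wedge f_q}(z_0)\ge q-s+1$, which holds for any multiplicities, already suffices; no finer information about the orders $\nu_{\langle{\bf f}_i,{\bf a}_j\rangle}(z_0)$ is needed.
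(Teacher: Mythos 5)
Your proposal is correct and takes essentially the same route as the paper: the paper's proof is precisely the pointwise reduction you describe, citing page 112 of Ru's book for the key fact that a zero of $\langle{\bf f}_t,{\bf a}_j\rangle$ in $X$ is a zero of ${\bf f}_1\wedge{\bf f}_2\wedge\cdots\wedge{\bf f}_q$ of multiplicity at least $q-s+1$, combined with the disjointness hypothesis so that each point carries at most one hyperplane and the truncated divisor contributes at most $n$ there. Your column-reduction argument (replacing ${\bf f}_i$ by ${\bf g}_i$ vanishing at $z_0$, using $\dim\operatorname{span}\le s-1$) is exactly the standard proof of that cited estimate, and your bookkeeping via the sharing hypothesis and the non-negative kernels matches the paper's ``direct calculation.''
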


A direct calculation yields Lemma \ref{lem2.3}, please see Page 112
of \cite{Ru} where it is proved that a zero of $\langle{\bf f}_t,{\bf
a}_j\rangle$ is a zero of ${\bf f}_1\wedge {\bf f}_2\wedge\cdots\wedge
{\bf f}_q$ with multiplicity at least $q-s+1$.

Let $T$ be a non-negative and non-decreasing continuous function in
$(0,+\infty)$. A positive increasing unbounded sequence $\{r_m\}$ is
a sequence of P\'olya peak of order $\sigma$ of $T(r)$, if there
exist sequences $\{r_m'\}, \{r_m''\}$, $\{\varepsilon_m\}$ and
$\{\varepsilon'_m\}$ such that

1) $r'_m\rightarrow\infty, r_m/r'_m\rightarrow\infty,
r''_m/r_m\rightarrow\infty, \varepsilon_m\rightarrow 0$ and
$\varepsilon'_m\rightarrow 0$, as $m\rightarrow\infty;$

2) $T(t)\leq (1+\varepsilon_m)\left(\frac{t}{r_m}\right)^\sigma
T(r_m),\ r'_m\leq t\leq r''_m;$

3) $T(t)/t^{\sigma-\varepsilon'_m}\leq
KT(r_m)/r_m^{\sigma-\varepsilon'_m},\ 1\leq t\leq r''_m,$ for a
positive constant $K$.

If $T$ has the lower order $\tau<\infty$ and order
$0<\lambda\leq\infty$, then for a finite positive number $\sigma$
with $\tau\leq\sigma\leq\lambda$ and a set $E$ of positive numbers
with finite logarithmic measure there must be a sequence of P\'olya
peak $\{r_m\}$ with order $\sigma$ of $T$ outside $E$ (Theorem 1.1.3
of \cite{Zheng}). The P\'olya peak sequence was first introduced by
Edrei, please see references in \cite{Zheng}. For a positive
increasing real-valued function $T$ of infinite order, the following
result is established in \cite{Zheng}.

\begin{lem}\label{lem2.4}\ Let $T$ be a positive increasing continuous
real-valued function $T$ of infinite order and $F$ a set of positive
real numbers with finite logarithmic measure. Then given a sequence
$\{s_n\}$ of positive numbers, there exists a sequence $\{r_n\}$ of
positive real numbers outside $F$ tending to $\infty$ such that
$$\frac{T(t)}{t^{s_n}}\leq e\frac{T(r_n)}{r_n^{s_n}},\ 1\leq t\leq
r_n.$$
\end{lem}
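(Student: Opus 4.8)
The plan is to pass to logarithmic variables, where the two hypotheses (growth of $T$ and smallness of $F$) become transparent, and then to show that the set of admissible radii is so large that it cannot possibly be contained in $F$. Concretely, I would fix the index $n$, write $s=s_n$, and set
\[
P(x)=\log T(e^x),\qquad u(x)=P(x)-sx,\qquad M(x)=\max_{0\le t\le x}u(t).
\]
Since $T$ is positive and increasing, $P$ is finite, continuous and nondecreasing, so $u$ obeys the one-sided Lipschitz bound $u(y)\ge u(x)-s(y-x)$ for $y\ge x$: the function $u$ may rise arbitrarily fast but can fall no faster than slope $-s$. With $r=e^x$ and $t=e^{x'}$, the target inequality $T(t)/t^{s}\le e\,T(r)/r^{s}$ for $1\le t\le r$ is exactly $u(x')\le u(x)+1$ for all $0\le x'\le x$, i.e. the point $x=\log r$ lies in the \emph{good set} $G=\{x\ge 0:\ M(x)-u(x)\le 1\}$. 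Two further translations complete the set-up: $F$ having finite logarithmic measure means precisely that $E:=\{\log t:\ t\in F\}$ has finite Lebesgue measure, and $T$ being of infinite order gives $\limsup_{r\to\infty}\log T(r)/\log r=+\infty$, so that $\limsup_{x\to\infty}u(x)=+\infty$ for \emph{every} fixed $s$, whence $M(x)\to\infty$. (This is the one place infinite order enters, and it is what frees us from any restriction on the $s_n$; for finite order one would need $s<\lambda$, matching the Remark that that case is genuinely different.)

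The whole proof then reduces to the measure statement $|G|=+\infty$. Granting it, $G\setminus E$ is unbounded because $|E|<\infty$, so I may select $x_n\in G\setminus E$ with $x_n\nearrow\infty$ and put $r_n=e^{x_n}\notin F$; these $r_n$ tend to infinity and satisfy the required inequality, proving the lemma.

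To establish $|G|=\infty$ I would decompose $[0,\infty)$ by first-passage times. Fix a spacing $\delta=\tfrac12$, set $L_k=M(0)+k\delta$, and let $x_k=\inf\{x\ge 0:\ u(x)\ge L_k\}$; since $M(x)\to\infty$ and $u$ is continuous, each $x_k$ is finite, increasing, with $u(x_k)=L_k$, and $u<L_{k+1}$ throughout $[x_k,x_{k+1})$, hence $M(x)<L_{k+1}$ there. On the interval $J_k=[x_k,x_{k+1}]$ the Lipschitz bound gives $u(x)\ge L_k-s(x-x_k)$, so
\[
M(x)-u(x)<L_{k+1}-\bigl(L_k-s(x-x_k)\bigr)=\delta+s(x-x_k)\le 1
\qquad\text{whenever } x-x_k\le (1-\delta)/s .
\]
Thus $G$ meets each $J_k$ in a subinterval of length at least $\min\{|J_k|,\,(1-\delta)/s\}=\min\{|J_k|,\,1/(2s)\}$, and these pieces are disjoint. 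Since $\sum_k|J_k|=\infty$, a short dichotomy finishes it: if infinitely many $|J_k|\ge 1/(2s)$ the contributions already sum to $\infty$; otherwise $|J_k|<1/(2s)$ for all large $k$, so cofinitely many $J_k$ lie \emph{entirely} in $G$ and again the lengths sum to $\infty$. Either way $|G|=\infty$.

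The main obstacle, and the reason a naive argument fails, is this: the obvious candidate radii are the ``record'' points where $T(t)/t^{s}$ attains its running maximum, and these trivially satisfy the inequality; but the record set can have Lebesgue measure zero and can be entirely buried inside the finite-logarithmic-measure set $F$, so exhibiting one good point is not enough—one must produce a good set of infinite measure. The delicate point in doing so is controlling how far the running maximum $M$ can overshoot the current value of $u$ while still beating the additive constant $1$ (equivalently, the factor $e$). This is exactly what forces the quantitative bookkeeping above: the one-sided Lipschitz bound prevents $u$ from collapsing after a peak, and the choice $\delta=\tfrac12<1$ leaves enough room for the slope term $s(x-x_k)$ so that the threshold $1$ is met on a genuine interval rather than at a single point.
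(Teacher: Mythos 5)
Your proof is correct and complete. One point of comparison first: the paper itself offers no proof of Lemma \ref{lem2.4} at all --- it is quoted from the monograph \cite{Zheng} (``the following result is established in \cite{Zheng}'') --- so there is no in-paper argument to measure you against; your proof must stand on its own, and it does. The reduction is exact: with $u(x)=\log T(e^{x})-s_n x$ and $M(x)=\max_{0\le t\le x}u(t)$, the conclusion at $r=e^{x}$ is precisely $M(x)-u(x)\le 1$; finite logarithmic measure of $F$ is finite Lebesgue measure of $E=\log F$; and infinite order gives $\limsup_{x\to\infty}u(x)=+\infty$ for \emph{every} fixed $s_n$, so $M(x)\to\infty$, which is where the hypothesis enters, as you say. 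The first-passage decomposition survives scrutiny: $\{x:u(x)\ge L_k\}$ is closed since $u$ is continuous, so $u(x_k)=L_k$ and the $x_k$ are strictly increasing; they tend to $\infty$ because $u$ is bounded on compacts, so $\sum_k|J_k|=\infty$ by telescoping; $M<L_{k+1}$ on $[x_k,x_{k+1})$ because the maximum of the continuous $u$ over $[0,x]$ is attained at a point $<x_{k+1}$; and the one-sided Lipschitz bound (monotonicity of $\log T(e^{x})$) gives $M(x)-u(x)<\delta+s(x-x_k)\le 1$ on an initial subinterval of $J_k$ of length $\min\{|J_k|,\,1/(2s)\}$, after which your dichotomy correctly yields $|G|=\infty$. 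Two cosmetic remarks only: the good set depends on $n$, so the final selection should be phrased as choosing, for each $n$, a point $x_n\in G_n\setminus E$ with $x_n>\max\{x_{n-1},n\}$ --- possible because $G_n\cap[A,\infty)$ still has infinite measure for every $A$, hence $G_n\setminus E$ is unbounded --- and $F$ should be (harmlessly) assumed measurable. Your argument in fact proves more than the lemma asserts: for each $n$ the set of admissible $r_n$ has infinite logarithmic measure, which is exactly the structural reason the statement can tolerate an arbitrary exceptional set of finite logarithmic measure; your diagnosis of why a naive ``record point'' selection fails (the record set can be null and swallowed by $F$) is also apt.
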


Let $f$ be a convex function on $[0,\infty)$. If $f(0)=0$, then
$\frac{f(x)}{x}$ is increasing on $(0,\infty)$. Therefore, if $T(r)$
is logarithmic convex on $[1,\infty)$, then for $1<r'<r$ we have
$$T(r)\geq \frac{\log r}{\log r'}T(r')+\frac{\log r'-\log r}{\log
r'}T(1).$$ In particular, for $\sigma>1$ and $r>1$ we have
\begin{equation}\label{2.6}T(r^\sigma)\geq \sigma
T(r)+(1-\sigma)T(1).\end{equation}

\section{Proofs of Theorems \ref{thm1.1} and \ref{thm1.2}}

{\sl Proof of Theorem \ref{thm1.1}.}\ Suppose that (\ref{11}) does
not hold. Take a $\theta\in [0,2\pi)$ and set
$\mathcal{Z}_\delta(\theta)=\{z:\ \theta-\delta<\arg
z<\theta+\delta\}$ for $\delta>0$. If no confusion occurs, we simply
write $\mathcal{Z}_\delta$. It is easy to see that for any
$z=re^{i\theta}$ and any $0<\varepsilon<\frac{3}{2\pi}$, we can find
that for $\delta=\frac{1}{2}\varepsilon$ and
$\kappa^2=\sqrt{1-\frac{2\pi}{3}\varepsilon}$,
$$\mathcal{Z}_\delta\cap A(\kappa^2 r,r)\subset B(z,\varepsilon|z|).$$
Suppose that $A_j=B(z,\varepsilon|z|)\cap f^{-1}_i(H_j)
(i=1,2,...,q)$ and (\ref{1.1}) holds on $\bigcup_{j=1}^pA_j.$
Applying Theorem \ref{thm2.1} to the angular domain
$\mathcal{Z}_\delta$, setting $u^i_{H_j}=\log|\langle {\bf
f}_i,H_j\rangle|$, we have
\begin{eqnarray*}&\ & (p-n-1)S_{\mathcal{Z}_\delta}(r,f_i)\\
&\leq &\sum_{j=1}^pC^{n)}_{\mathcal{Z}_\delta}(r; H_j,f_i)+R_{\mathcal{Z}_\delta}(r,f_i)\\
&\leq &\sum_{j=1}^pC^{n)}_{\mathcal{Z}_\delta}(\kappa^2 r,r; H_j,f_i)
+\sum_{j=1}^pC^{n)}_{\mathcal{Z}_\delta}(\kappa^2 r;H_j,f_i)\\
&&\!\!\!+\sum_{j=1}^p\frac{1}{2\pi}\int_1^{\kappa^2r}\int_{\theta-\delta}^{\theta+\delta}
\left(\frac{t^\omega}{(\kappa^2r)^{2\omega}}-\frac{t^\omega}{r^{2\omega}}\right)
\sin\omega(\vartheta-\theta+\delta)\Delta
u^i_{H_j}(te^{i\vartheta})+R_{\mathcal{Z}_\delta}(r,f_i)\end{eqnarray*}
\begin{eqnarray*}
&\leq&\frac{n}{q-s+1}C_{\mathcal{Z}_\delta}(\kappa^2 r,r;
\nu_{f_1\wedge f_2\wedge\cdots\wedge
f_q})\ \ (\text{by\ Lemma\ \ref{lem2.3}})\\
&\ &+\sum_{j=1}^pC^{n)}_{\mathcal{Z}_\delta}(\kappa r;H_j,f_i)
+\frac{\kappa^{-4\omega}-1}{2\pi
r^{2\omega}}\sum_{j=1}^p\int_1^{\kappa^2r}\int_{\theta-\delta}^{\theta+\delta}
t^\omega\sin\omega(\vartheta-\theta+\delta)\Delta
u^i_{H_j}(te^{i\vartheta})\\
&\ &+R_{\mathcal{Z}_\delta}(r,f_i)\\
&\leq&\frac{n}{q-s+1}C_{\mathcal{Z}_\delta}(r; \nu_{f_1\wedge
f_2\wedge\cdots\wedge
f_q})+\sum_{j=1}^pC^{n)}_{\mathcal{Z}_\delta}(\kappa r;H_j,f_i)\\
&\ &+\frac{\kappa^{2\omega}(\kappa^{-4\omega}-1)}{
r^{\omega}}\sum_{j=1}^pn_{\mathcal{Z}_\delta}(\kappa^2 r,H_j,f_i)
+R_{\mathcal{Z}_\delta}(r,f_i)\\
 &< &\frac{n}{q-s+1}\sum_{i=1}^q
S_{\mathcal{Z}_\delta}( r,f_i)\ \
(\text{by\ Lemma\ \ref{lem2.2}})\\
 &\
&+\sum_{j=1}^pC^{n)}_{\mathcal{Z}_\delta}(\kappa
r;H_j,f_i)+\frac{\kappa^{-4\omega}-1}{r^{\omega}\log
\kappa^{-1}}\sum_{j=1}^pN_{\mathcal{Z}_\delta}(\kappa r,H_j,f_i)
+R_{\mathcal{Z}_\delta}(r,f_i).
\end{eqnarray*}
This implies that
\begin{eqnarray*}
(p-n-1)\sum_{i=1}^q S_{\mathcal{Z}_\delta}(r,f_i)&\leq &\frac{q
n}{q-s+1}\sum_{i=1}^q
S_{\mathcal{Z}_\delta}(r,f_i)+\sum_{i=1}^q\sum_{j=1}^pC^{n)}_{\mathcal{Z}_\delta}(\kappa r;H_j,f_i)\\
&\ & +\frac{\kappa^{-4\omega}-1}{r^{\omega}\log
\kappa^{-1}}\sum_{i=1}^q\sum_{j=1}^pN_{\mathcal{Z}_\delta}(\kappa
r,H_j,f_i)+\sum_{i=1}^q R_{\mathcal{Z}_\delta}(r,f_i),
\end{eqnarray*}
so that $$Q\sum_{i=1}^q S_{\mathcal{Z}_\delta}(r,f_i)\leq
\frac{1}{p-n-1}\sum_{i=1}^q\sum_{j=1}^pC^{n)}_{\mathcal{Z}_\delta}(\kappa
r;H_j,f_i)$$
\begin{equation}\label{1.2}+\frac{1}{p-n-1}\left(\frac{\kappa^{-4\omega}-1}{
r^{\omega}\log
\kappa^{-1}}\sum_{i=1}^q\sum_{j=1}^pN_{\mathcal{Z}_\delta}(\kappa
r,H_j,f_i)+\sum_{i=1}^q
R_{\mathcal{Z}_\delta}(r,f_i)\right),\end{equation} where
$Q=1-\frac{qn}{(p-n-1)(q-s+1)}>0$. In view of Lemma \ref{lem2.1}, we
have
$$C^{n)}_{\mathcal{Z}_\delta}(\kappa r;H_j,f_i)\leq
2\omega\frac{N(\kappa r,H_j,f_i)}{(\kappa
r)^{\omega}}+\omega^2\int_1^{\kappa
r}\frac{N(t,H_j,f_i)}{t^{\omega+1}}{\rm d}t$$
$$\leq \frac{2\omega}{\kappa^\omega}
\frac{T(\kappa r,f_i)}{r^{\omega}}+\omega^2\int_1^{\kappa
r}\frac{T(t,f_i)}{t^{\omega+1}}{\rm d}t+O(1), \
\omega=\frac{\pi}{2\delta}.$$

 If for some $s>\omega$ we have
\begin{equation}\label{1.3} \frac{T(t)}{t^s}\leq e\frac{T(r)}{r^s},\ 1\leq
t\leq r,
\end{equation}
then
$$\int_1^{r}\frac{T(t)}{t^{\omega+1}}{\rm
d}t\leq e\int_1^{r}\frac{T(r)}{r^s}\frac{t^s}{t^{\omega+1}}{\rm
d}t=e\frac{T(r)}{r^s}\frac{1}{s-\omega}t^{s-\omega}|_1^r$$
$$<\frac{e}{s-\omega}\frac{T(r)}{r^\omega}.$$
Thus if (\ref{1.3}) holds for $T(r)=\sum_{i=1}^q T(r,f_i)$, we have
\begin{eqnarray}\label{1.4}
\sum_{i=1}^qC^{n)}_{\mathcal{Z}_\delta}(\kappa r;H_j,f_i)&\leq &
\frac{2\omega}{\kappa^\omega} \frac{T(\kappa r)}{r^{\omega}}+
\frac{e\omega^2}{s-\omega}\frac{T(r)}{r^\omega}+O(1)\nonumber\\
&\leq& \left(2e\omega\kappa^{s-\omega}+
\frac{e\omega^2}{s-\omega}\right)\frac{T(r)}{r^\omega}+O(1).\end{eqnarray}
And we have
$$\frac{\kappa^{-4\omega}-1}{
r^{\omega}\log \kappa^{-1}}\sum_{i=1}^qN_{\mathcal{Z}_\delta}(\kappa
r,H_j,f_i)<\frac{e(\kappa^{-4\omega}-1)\kappa^s}{\log
\kappa^{-1}}\frac{T(r)}{r^{\omega}}.$$

 On the other hand, we have, for any hyperplane $H$,
\begin{eqnarray}\label{3.4}S_{\mathcal{Z}_\delta}(r,f_i)&\geq &
C_{\mathcal{Z}_\delta}(r;H,f_i)+O(1)\geq
\omega\sin(\omega\frac{\delta}{2})\frac{N_{\mathcal{Z}_{\delta/2}}(r,H,f_i)}{r^\omega}+O(1)\nonumber\\
&=&\frac{\sqrt{2}\omega}{2}\frac{N_{\mathcal{Z}_{\delta/2}}(r,H,f_i)}{r^\omega}+O(1).\end{eqnarray}
It follows from (\ref{1.2}), (\ref{1.4}) and (\ref{3.4}) that
\begin{eqnarray}\label{1.5}
&\ &\frac{\sqrt{2}\omega}{2} Q\sum_{i=1}^qN_{\mathcal{Z}_{\delta/2}}(r,H,f_i)\nonumber\\
&\leq &\frac{p}{p-n-1}\left(2e\omega\kappa^{s-\omega}+
\frac{e\omega^2}{s-\omega}+\frac{e(\kappa^{-4\omega}-1)\kappa^s}{
\log
\kappa^{-1}}\right)T(r)\nonumber\\
&\
&+\frac{r^{\omega}}{p-n-1}\sum_{i=1}^qR_{\mathcal{Z}_\delta}(r,f_i).\end{eqnarray}

Take a sequence of positive numbers $\{s_m\}$ such that
$s_m\to\infty$ and for the given sequence of positive numbers
$\{\varepsilon_m\}$, $s_m\varepsilon_m^2\to\infty$ as $m\to\infty$.
Then set $\delta_m=\frac{1}{2}\varepsilon_m$,
$\kappa^2_m=\sqrt{1-\frac{2\pi}{3}\varepsilon_m}$,
$\omega_m=\frac{\pi}{2\delta_m}$ and
$\mathcal{Z}_{\delta_m}^k=\mathcal{Z}_{\delta_m}(\theta_{m,k}),\
\theta_{m,k}=\frac{(2k-1)}{4}\varepsilon_m\
(k=1,2,...,\left[\frac{4\pi}{\varepsilon_m}\right]+2)$.

Since $T(r)=\sum_{i=1}^q T(r,f_i)$ is of infinite order, in view of
Lemma \ref{lem2.4}, there exists an increasing sequence of positive
numbers $\{r_m\}$ such that (\ref{1.3}) holds for $T(r)$ with every
$r=r_m$ and every $s=s_m$. From Cartan's second main theorem (see
Theorem A3.1.7 of \cite{Ru}), we can have a hyperplane $H$ such that
$$\sum_{i=1}^qN(r_m,H,f_i)\sim T(r_m)\ (m\to\infty).$$
Since
$$\sum_{i=1}^qN(r_m,H,f_i)\leq\sum_{k=1}^{\left[\frac{4\pi}{\varepsilon_m}\right]+2}
\sum_{i=1}^qN_{\mathcal{Z}^k_{\delta_m/2}}(r_m,H,f_i)$$
$$\leq \left(\left[\frac{4\pi}{\varepsilon_m}\right]+2\right)
\sum_{i=1}^qN_{\mathcal{Z}^{k_m}_{\delta_m/2}}(r_m,H,f_i) , \forall\
m\in\mathbb{N}, $$ for some $k_m$, we have
$$\frac{\varepsilon_m}{4\pi+1}T(r_m)\leq \sum_{i=1}^qN_{\mathcal{Z}^{k_m}_{\delta_m/2}}(r_m,H,f_i).$$
From (\ref{1.5}) it follows that
\begin{eqnarray}\label{3.5}\frac{\sqrt{2}}{10} Q T(r_m)&\leq&
\frac{\sqrt{2}}{2}\omega_m Q \frac{\varepsilon_m}{4\pi+1}T(r_m)\nonumber\\
&\leq &\frac{p}{p-n-1}\left(2e\omega_m\kappa_m^{s_m-\omega_m}+
\frac{e\omega_m^2}{s_m-\omega_m}+\frac{e(\kappa_m^{-4\omega_m}-1)\kappa^{s_m}_m}{
\log \kappa_m^{-1}}\right)T(r_m)\nonumber\\
&\
&+\frac{r_m^{\omega_m}}{p-n-1}\sum_{i=1}^qR_{\mathcal{Z}_{\delta_m}^{k_m}}(r_m,f_i),
\ s_m>\omega_m.
\end{eqnarray}
In view of (\ref{1.3}), $T(r_m)\geq \frac{1}{e}r_m^{s_m}T(1)$, which reduces
$$T(r_m)= T^{1/2}(r_m)T^{1/2}(r_m)\geq
\frac{1}{e^{1/2}}T^{1/2}(r_m)r_m^{s_m/2}T^{1/2}(1).$$ Therefore in view of
(\ref{2.3}) we have
$$\frac{r_m^{\omega_m}}{p-n-1}\sum_{i=1}^qR_{\mathcal{Z}_{\delta_m}^{k_m}}(r_m,f_i)\leq
K\omega_m r_m^{\omega_m-s_m/2}T(r_m),\ \forall\ m\in\mathbb{N},$$
where $K$ is a constant independent of $m$. Substituting the above
inequality to (\ref{3.5}) yields
$$\frac{\sqrt{2}}{10}Q\leq\frac{p}{p-n-1}\left(2e\omega_m\kappa_m^{s_m-\omega_m}+
\frac{e\omega^2_m}{s_m-\omega_m}+\frac{e(\kappa_m^{-4\omega_m}-1)\kappa^{s_m}_m}{
\log \kappa_m^{-1}}\right)$$
\begin{equation}\label{3.6}+K\omega_m
r_m^{\omega_m-s_m/2},\end{equation} $\forall\ s_m>\omega_m.$  It is
easily seen that as $m\to\infty$, we have
\begin{eqnarray*}
&\
&\frac{e\omega^2_m}{s_m-\omega_m}=\frac{e\pi^2}{s_m\varepsilon^2_m-\pi\varepsilon_m}\to
0,\\
&\ & (s_m-\omega_m)\log
\kappa_m-\log\varepsilon_m=\frac{s_m\varepsilon_m-\pi}{\varepsilon_m}\frac{1}{4}\left(-\frac{2\pi}{3}\varepsilon_m
+o(\varepsilon_m)\right)-\log\varepsilon_m\\
&\ &=\left(-\frac{\pi}{6}+o(1)\right)(s_m\varepsilon_m-\pi)-\log\varepsilon_m\\
&\
&=\left(-\frac{\pi}{6}+o(1)\right)(s_m\varepsilon_m-\pi)-s_m\varepsilon_m\frac{\varepsilon_m\log\varepsilon_m}{
s_m\varepsilon_m^2}\end{eqnarray*}
\begin{eqnarray*}
&\ &=\left(-\frac{\pi}{6}+o(1)\right)s_m\varepsilon_m\to -\infty,\\
&\ & 2e\omega_m\kappa_m^{s_m-\omega_m}\to 0,\ \omega_m
r_m^{\omega_m-s_m/2}\to 0,\\
&\
&\kappa_m^{-4\omega_m}=\exp\left(-\frac{\pi}{\varepsilon_m}\log\left(1-\frac{2\pi}{3}\varepsilon_m\right)\right)
\to\exp\frac{2\pi^2}{3},\\
&\ &
\frac{\kappa_m^{s_m}}{-\log\kappa_m}=\frac{4\kappa_m^{s_m}}{-\log\left(1-\frac{2\pi}{3}\varepsilon_m\right)}
\sim\frac{6}{\pi}\frac{\kappa_m^{s_m}}{\varepsilon_m}\to
0.\end{eqnarray*}

Therefore, if the conditions in Theorem \ref{thm1.1} holds for
$B(\{z_m\};\{\varepsilon_m\})$ with
$z_m=r_me^{i\theta_{m,k_m}}$, then we have (\ref{3.6}).
 Letting
$m\to\infty$, the right side of (\ref{3.6}) will tend to $0$ and so
$Q=0$, a contradiction is derived. \qed

\

{\sl Proof of Theorem \ref{thm1.2}.}\ Suppose that (\ref{11}) does
not hold. Take $r'_m$ in $[r_m/2,r_m]$ outside the exceptional set
in Cartan's second main theorem which only has finite linear measure
for $q$ given holomorphic curves. Below we always take $r=r'_m$.
Then
\begin{eqnarray*}&\ & (p-n-1)T(r^\sigma,f_i)\\
&\leq &\sum_{j=1}^pN^{n)}(r^\sigma,H_j,f_i)+O(\log^+rT(r^\sigma,f_i))\\
&\leq&\frac{n}{q-s+1}N(r^\sigma,\nu_{f_1\wedge f_2\wedge\cdots\wedge
f_q})\ \
(\text{by\ Lemma\ \ref{lem2.3}})\\
&\
&+\sum_{j=1}^pN^{n)}(2r,H_j,f_i)+O(\log^+rT(r^\sigma,f_i))\\
&\leq &\frac{n}{q-s+1}\sum_{i=1}^q T(r^\sigma,f_i)\ \ (\text{by\
Lemma\ \ref{lem2.2}})\\
&\ &+pT(2r,f_i)+O(\log^+rT(r^\sigma,f_i)).
\end{eqnarray*}
This implies that
\begin{eqnarray*}
(p-n-1)\sum_{i=1}^q T(r^\sigma,f_i)&\leq &\frac{q
n}{q-s+1}\sum_{i=1}^q T(r^\sigma,f_i)+p\sum_{i=1}^q
T(2r,f_i)\\
&\ &+O(\sum_{i=1}^q\log^+rT(r^\sigma,f_i)),
\end{eqnarray*}
so that
$$\left(1-\frac{qn}{(p-n-1)(q-s+1)}+o(1)\right)\sum_{i=1}^q
T(r^\sigma,f_i)\leq \frac{p}{p-n-1}\sum_{i=1}^q T(2r,f_i),$$
$m\to\infty.$ In view of the logarithmic convex of $\sum_{i=1}^q
T(r,f_i)$, we have
$$\sum_{i=1}^q
T(r^\sigma,f_i)\geq \frac{\sigma\log r}{\log 2r}\sum_{i=1}^q
T(2r,f_i)+(1-\sigma)\sum_{i=1}^q T(1,f_i).$$ It is easy to see that
$$\sigma\left(1-\frac{qn}{(p-n-1)(q-s+1)}\right)\leq\frac{p}{p-n-1},$$
which reduces
$$p-\frac{p}{\sigma}\leq \frac{qn}{q-s+1}+n+1.$$
A contradiction is derived. \qed

The proof of Theorem \ref{thm1.3} is similar to that of Theorem
\ref{thm1.2}. We would like to leave it to the reader.

\section{Proofs of Theorems \ref{thm1.4} and \ref{thm1.6}}

Let $f_1,f_2:{\mathbb{C}}\rightarrow{\mathcal{P}}^n({\mathbb{C}})$
be two holomorphic curves with reduced representations ${\bf f}_1,{\bf f}_2$.  Let $H_1,H_2,...,H_p$ (or ${\bf a}_1,{\bf a}_2,...,{\bf a}_p$) be $p (>2n)$
hyperplanes in ${\mathcal{P}}^n({\mathbb{C}})$ located in general
position. Suppose that $f_1\not\equiv f_2$. By changing indices if
necessary, we may assume that
\begin{eqnarray*}
&&\!\!\!\!\!\!\!\!\underbrace{\frac{\langle{\bf f}_1,{\bf
a}_1\rangle}{\langle{\bf f}_2,{\bf
a}_1\rangle}\equiv\frac{\langle{\bf f}_1,{\bf
a}_2\rangle}{\langle{\bf f}_2,{\bf a}_2\rangle}\equiv
...\equiv\frac{\langle{\bf f}_1,{\bf
a}_{\varsigma_1}\rangle}{\langle{\bf f}_2,{\bf
a}_{\varsigma_1}\rangle}}_{\mbox{group}\ 1}\not\equiv
\underbrace{\frac{\langle{\bf f}_1,{\bf
a}_{\varsigma_1+1}\rangle}{\langle{\bf f}_2,{\bf
a}_{\varsigma_1+1}\rangle}\equiv
...\equiv\frac{\langle{\bf f}_1,{\bf a}_{\varsigma_2}\rangle}{\langle{\bf f}_2,{\bf a}_{\varsigma_2}\rangle}}_{\mbox{group}\  2}\\
&&\not\equiv...\not\equiv\underbrace{\frac{\langle{\bf f}_1,{\bf
a}_{\varsigma_{u-1}+1}\rangle}{\langle{\bf f}_2,{\bf
a}_{\varsigma_{u-1}+1}\rangle}\equiv ...\equiv\frac{\langle{\bf
f}_1,{\bf a}_{\varsigma_u}\rangle}{\langle{\bf f}_2,{\bf
a}_{\varsigma_u}\rangle}}_{\mbox{group}\ u},
\end{eqnarray*}
where $\varsigma_u=p$. Since $f_1\not\equiv f_2$, there exist at
most $n$ elements in every group.

We define the map $\sigma:\{1,2,...,p\}\rightarrow\{1,2,...,p\}$
by
$$
\sigma(i)=\left\{\begin{array}{cl}i+n, &\mbox{if}\ i\le p-n;\\
i+n-p, &\mbox{if}\ i>p-n.
\end{array}\right.
$$
It is easy to see that $\sigma$ is bijective and $|\sigma(i)-i|\ge
n$ (note that $p>2n$). Hence $\frac{\langle{\bf f}_1,{\bf
a}_i\rangle}{\langle{\bf f}_2,{\bf a}_i\rangle}$ and
$\frac{\langle{\bf f}_1,{\bf a}_{\sigma(i)}\rangle}{\langle{\bf
f}_2,{\bf a}_{\sigma(i)}\rangle}$ belong to distinct groups, so that
$$
P_i=\langle{\bf f}_1,{\bf a}_i\rangle\langle{\bf f}_2,{\bf
a}_{\sigma(i)}\rangle-\langle{\bf f}_2,{\bf a}_{i}\rangle\langle{\bf
f}_1,{\bf a}_{\sigma(i)}\rangle\not\equiv 0\ {\mbox{for}}\
i=1,2,...,p.
$$

\

{\sl Proof of Theorem \ref{thm1.4}.}\ Suppose $f\not\equiv g$ and
set $f_1:=f$ and $f_2:=g$. So for $i=1,2,...,2n+3$, $P_i\not\equiv
0$. For $z=re^{i\theta}$, assume that $f_1$ and $f_2$ satisfy

i) $ \min\{\nu_{\langle{\bf f}_1,{\bf a}_j\rangle},1\}=\min\{\nu_{\langle{\bf f}_2,{\bf a}_j\rangle},1\}$ on
$B(z,\varepsilon|z|)$ for $j=1,2,..., 2n+3$;

ii) $ f_1=f_2$ on $B(z,\varepsilon|z|)\cap\bigcup_{j=1}^{2n+3}
f^{-1}(H_j)$.

\noindent Below we use the notations $\delta, \kappa$ and
$\mathcal{Z}_{\delta}=\mathcal{Z}_{\delta}(\theta)$ in the proof of
Theorem \ref{thm1.1} with the same meanings.

\begin{lem}\label{lem4.2}\ We have
\begin{eqnarray}&&\left(1+\frac{n+2}{2n}\right)\sum_{l=1}^2S_{\mathcal{Z}_{\delta}}(r,f_{l})\le
\left(2+\frac{1}{2n}\right)\sum_{l=1}^2R_{\mathcal{Z}_{\delta}}(r,f_l)\nonumber\\
&\ &+
\left(2+\frac{1}{2n}\right)\sum_{j=1}^{2n+3}\sum_{l=1}^2C_{\mathcal{Z}_{\delta}}^{n)}(\kappa
r;H_j,f_{l})+O(1)\label{eqn00-1-4}\\
&\ &+\left(2+\frac{1}{2n}\right)\frac{\kappa^{-4\omega}-1}{
r^{\omega}\log \kappa^{-1}}\sum_{j=1}^{2n+3}\sum_{l=1}^2N_{\mathcal{Z}_\delta}(\kappa
r,H_j,f_l).\nonumber
\end{eqnarray}
\end{lem}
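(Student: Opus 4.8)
The plan is to establish Lemma~\ref{lem4.2} by applying the second main theorem (Theorem~\ref{thm2.1}) on the angular domain $\mathcal{Z}_\delta$ twice, once for each of the two hyperplane configurations that the Smiley-type argument provides. The starting point is the setup preceding the lemma: since $f_1\not\equiv f_2$ and the $2n+3$ hyperplanes are in general position with $p=2n+3>2n$, each auxiliary function $P_i=\langle{\bf f}_1,{\bf a}_i\rangle\langle{\bf f}_2,{\bf a}_{\sigma(i)}\rangle-\langle{\bf f}_2,{\bf a}_i\rangle\langle{\bf f}_1,{\bf a}_{\sigma(i)}\rangle$ is not identically zero. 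First I would record the key divisor inequality that is the engine of the four-value (Smiley) method: because $f_1=f_2$ on $B(z,\varepsilon|z|)\cap\bigcup_j f^{-1}(H_j)$ and they share the $H_j$ ignoring multiplicity there, every common zero of $\langle{\bf f}_1,{\bf a}_j\rangle$ and $\langle{\bf f}_2,{\bf a}_j\rangle$ inside the relevant region contributes a zero of $P_i$. Summing these contributions over the $2n+3$ hyperplanes and over the pairing $\sigma$ gives a lower bound for the counting function of the zeros of $\prod_i P_i$ in terms of $\sum_{j,l} C^{n)}_{\mathcal{Z}_\delta}(r;H_j,f_l)$, restricted to the annular portion $A(\kappa^2 r,r)\cap\mathcal{Z}_\delta$ that is contained in $B(z,\varepsilon|z|)$.

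Next I would bound the zeros of the $P_i$ from above. Each $P_i$ is (up to the reduced representation) an entire function controlled by $f_1$ and $f_2$, so the Jensen/first-main-theorem estimate on the angular domain yields $C_{\mathcal{Z}_\delta}(r;0,P_i)\le \sum_{l=1}^2 S_{\mathcal{Z}_\delta}(r,f_l)+O(1)$, analogous to how Lemma~\ref{lem2.2} controls the wedge product. Combining the lower bound from the shared-value structure with this upper bound produces a first inequality relating $\sum_{j,l}C^{n)}_{\mathcal{Z}_\delta}(r;H_j,f_l)$ to $\sum_l S_{\mathcal{Z}_\delta}(r,f_l)$. Independently, applying Theorem~\ref{thm2.1} directly to each $f_l$ with the $2n+3$ hyperplanes gives $(2n+3-n-1)S_{\mathcal{Z}_\delta}(r,f_l)=(n+2)S_{\mathcal{Z}_\delta}(r,f_l)\le \sum_j C^{n)}_{\mathcal{Z}_\delta}(r;H_j,f_l)+R_{\mathcal{Z}_\delta}(r,f_l)$. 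The coefficients $1+\frac{n+2}{2n}$ and $2+\frac{1}{2n}$ in the statement signal that the final bound is a specific convex combination of the SMT inequality for the two curves and the $P_i$-based inequality, weighted so that the truncation level $n$ and the count $2n+3$ balance; I would determine those weights by matching the coefficient of $\sum_j C^{n)}$ on both sides.

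The decomposition that converts the full-angle counting functions on $\mathcal{Z}_\delta$ into the annular pieces $C^{n)}_{\mathcal{Z}_\delta}(\kappa^2 r,r;\cdot)$ plus the leftover inner-disk term follows exactly the splitting already carried out in the proof of Theorem~\ref{thm1.1}: one writes $C^{n)}_{\mathcal{Z}_\delta}(r;H_j,f_l)$ as the annular contribution (where the shared-value hypothesis is available and feeds into the $P_i$ estimate) plus a tail that is estimated by $C^{n)}_{\mathcal{Z}_\delta}(\kappa r;H_j,f_l)$ and the weighted integral producing the $\frac{\kappa^{-4\omega}-1}{r^\omega\log\kappa^{-1}}\sum N_{\mathcal{Z}_\delta}(\kappa r,H_j,f_l)$ term via Lemma~\ref{lem2.1}. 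I expect the main obstacle to be the bookkeeping of multiplicities in the Smiley inequality: one must verify that at a shared zero the function $P_{\sigma^{-1}(i)}$ (and its partners) vanishes to the right order so that the truncated counting functions $C^{n)}$, rather than the full $C$, appear with the correct coefficient, and that the pairing $\sigma$ with $|\sigma(i)-i|\ge n$ genuinely forces $\langle{\bf f}_1,{\bf a}_i\rangle/\langle{\bf f}_2,{\bf a}_i\rangle$ and its $\sigma$-partner into distinct groups. Once the multiplicity accounting is pinned down, assembling the two SMT applications with the computed weights and the standard annular splitting yields~(\ref{eqn00-1-4}).
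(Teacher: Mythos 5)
Your proposal follows essentially the same route as the paper's proof: the same Smiley-type divisor inequality for $\nu_{P_i}$ (with the multiplicity bookkeeping you flag resolved exactly by $\min\{a,b\}\ge\min\{a,n\}+\min\{b,n\}-n$, and the distinct-group property of $\sigma$ already secured in the setup preceding the lemma), the first-main-theorem bound $C_{\mathcal{Z}_{\delta}}(\kappa^2 r,r;0,P_i)\le\sum_{l'=1}^2S_{\mathcal{Z}_{\delta}}(r,f_{l'})+O(1)$, summation over $i$ using bijectivity of $\sigma$ together with $C^{1)}_{\mathcal{Z}_{\delta}}\ge\frac{1}{n}C^{n)}_{\mathcal{Z}_{\delta}}$ to produce the weight $2+\frac{1}{2n}$, and then Theorem \ref{thm2.1} applied to each $f_l$ with the annular splitting and Lemma \ref{lem2.1} tail terms exactly as in the proof of Theorem \ref{thm1.1}. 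The coefficient matching you defer works out as in the paper, since $\left(2+\frac{1}{2n}\right)(n+2)-(2n+3)=1+\frac{n+2}{2n}$.
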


\begin{proof}By the assumptions, we have, for $i=1,2,...,2n+3$,
\begin{eqnarray*}
&\ &\nu_{P_i}(z_0)\ge\min\{\nu_{\langle{\bf f}_1,{\bf a}_i\rangle}(z_0),\nu_{\langle{\bf f}_2,{\bf a}_i\rangle}(z_0)\}\\
&+&\min\{\nu_{\langle{\bf f}_1,{\bf a}_{\sigma(i)}\rangle}(z_0),\nu_{\langle{\bf f}_2,{\bf a}_{\sigma(i)}\rangle}(z_0)\}
+\sum_{\substack{j=1\\j\neq
i,\sigma(i)}}^{2n+3}\nu^{1)}_{\langle{\bf f}_l,{\bf a}_j\rangle}(z_0)
\end{eqnarray*}
for $z_0\in B(z,\varepsilon|z|)$ and $l=1,2$. Note that
$$
\min\{a,b\}\ge\min\{a,n\}+\min\{b,n\}-n
$$
for any positive integers $a$ and $b$. Hence,
\begin{eqnarray*}
\nu_{P_i}(z_0)&\ge&\sum_{j= i,\sigma(i)}(\min\{\nu_{\langle{\bf f}_1,{\bf a}_j\rangle}(z_0),n\}
+\min\{\nu_{\langle{\bf f}_2,{\bf a}_j\rangle}(z_0),n\}\\
&\ &-n\min\{\nu_{\langle{\bf f}_l,{\bf a}_j\rangle}(z_0),1\})+\sum_{\substack{j=1\\j\neq
i,\sigma(i)}}^{2n+3}\nu^{1)}_{\langle{\bf f}_l,{\bf a}_j\rangle}(z_0).
\end{eqnarray*}
It follows that $$\sum_{\substack{j=1\\j\neq
i,\sigma(i)}}^{2n+3}C_{\mathcal{Z}_{\delta}}^{1)}(\kappa^2
r,r;H_j,f_l) +\sum_{j=
i,\sigma(i)}\left(\sum_{l'=1}^2C_{\mathcal{Z}_{\delta}}^{n)}(\kappa^2
r,r;H_j,f_{l'}) -nC_{\mathcal{Z}_{\delta}}^{1)}(\kappa^2
r,r;H_j,f_l)\right)$$ \begin{equation}\le
C_{\mathcal{Z}_{\delta}}(\kappa^2 r,r;0,P_i).\label{eqn00-1-2}
\end{equation}
On the other hand, it is easy to see that
\begin{eqnarray}
C_{\mathcal{Z}_{\delta}}(\kappa^2 r,r;0,P_i)\le
C_{\mathcal{Z}_{\delta}}(r;0,P_i)\le
S_{\mathcal{Z}_{\delta}}(r,f_{1})+S_{\mathcal{Z}_{\delta}}(r,f_{2})+O(1).\label{eqn00-1-3}
\end{eqnarray}
Combining (\ref{eqn00-1-2}) and (\ref{eqn00-1-3}) implies
$$\sum_{\substack{j=1\\j\neq
i,\sigma(i)}}^{2n+3}C_{\mathcal{Z}_{\delta}}^{1)}(\kappa^2
r,r;H_j,f_l) +\sum_{j=
i,\sigma(i)}\left(\sum_{l'=1}^2C_{\mathcal{Z}_{\delta}}^{n)}(\kappa^2
r,r;H_j,f_{l'}) -nC_{\mathcal{Z}_{\delta}}^{1)}(\kappa^2
r,r;H_j,f_l)\right)$$ \begin{equation*}\leq
C_{\mathcal{Z}_{\delta}}(\kappa^2
r,r;0,P_i)\le\sum_{l'=1}^2S_{\mathcal{Z}_{\delta}}(r,f_{l'})+O(1).
\end{equation*}
Taking summation of the above inequality over $1\le i\le 2n+3$ and
noting that $\sigma$ is bijective, we have
$$(2n+1)\sum_{\substack{j=1}}^{2n+3}C_{\mathcal{Z}_{\delta}}^{1)}(\kappa^2
r,r;H_j,f_l)$$
$$+2\sum_{\substack{j=1}}^{2n+3}\left(\sum_{l'=1}^2C_{\mathcal{Z}_{\delta}}^{n)}(\kappa^2
r,r;H_j,f_{l'}) -nC_{\mathcal{Z}_{\delta}}^{1)}(\kappa^2
r,r;H_j,f_l)\right)$$
\begin{equation*}\le(2n+3)\sum_{l'=1}^2S_{\mathcal{Z}_{\delta}}(r,f_{l'})+O(1)
\end{equation*}
so that
$$\sum_{j=1}^{2n+3}C_{\mathcal{Z}_{\delta}}^{1)}(\kappa^2
r,r;H_j,f_l)+2\sum_{j=1}^{2n+3}\sum_{l'=1}^2C_{\mathcal{Z}_{\delta}}^{n)}(\kappa^2
r,r;H_j,f_{l'})$$
$$\le(2n+3)\sum_{l'=1}^2S_{\mathcal{Z}_{\delta}}(r,f_{l'})+O(1)$$
for $l=1,2$. Therefore, by noting that
$C_{\mathcal{Z}_{\delta}}^{1)}(\kappa^2
r,r;H_j,f_l)\geq\frac{1}{n}C_{\mathcal{Z}_{\delta}}^{n)}(\kappa^2
r,r;H_j,f_l)$, we have
\begin{eqnarray*}
\left(2+\frac{1}{2n}\right)\sum_{j=1}^{2n+3}\sum_{l=1}^2C_{\mathcal{Z}_{\delta}}^{n)}(\kappa^2
r,r;H_j,f_{l})\le(2n+3)\sum_{l=1}^2S_{\mathcal{Z}_{\delta}}(r,f_{l})+O(1).
\end{eqnarray*}
From Theorem \ref{thm2.1}, it follows that
\begin{eqnarray*}
&&\left(2+\frac{1}{2n}\right)(n+2)\sum_{l=1}^2S_{\mathcal{Z}_{\delta}}(r,f_{l})\\
&\le&\left(2+\frac{1}{2n}\right)\sum_{j=1}^{2n+3}\sum_{l=1}^2(C_{\mathcal{Z}_{\delta}}^{n)}(\kappa^2
r,r;H_j,f_{l}) +C_{\mathcal{Z}_{\delta}}^{n)}(\kappa^2
r;H_j,f_{l}))\\
&&+\left(2+\frac{1}{2n}\right)\sum_{j=1}^{2n+3}\sum_{l=1}^2\frac{\kappa^{2\omega}(\kappa^{-4\omega}-1)}{
r^{\omega}}n_{\mathcal{Z}_\delta}(\kappa^2 r,H_j,f_l)\\
&&+\left(2+\frac{1}{2n}\right)\sum_{l=1}^2R_{\mathcal{Z}_{\delta}}(r,f_l)\\
&\le&(2n+3)\sum_{l=1}^2S_{\mathcal{Z}_{\delta}}(r,f_{l})
+\left(2+\frac{1}{2n}\right)\sum_{l=1}^2R_{\mathcal{Z}_{\delta}}(r,f_l)\\
&&+
\left(2+\frac{1}{2n}\right)\sum_{j=1}^{2n+3}\sum_{l=1}^2C_{\mathcal{Z}_{\delta}}^{n)}(\kappa
r;H_j,f_{l})+O(1)\\
&&+\left(2+\frac{1}{2n}\right)\frac{\kappa^{-4\omega}-1}{
r^{\omega}\log
\kappa^{-1}}\sum_{j=1}^{2n+3}\sum_{l=1}^2N_{\mathcal{Z}_\delta}(\kappa
r,H_j,f_l).
\end{eqnarray*}
This implies immediately (\ref{eqn00-1-4}).\end{proof}

Then the proof of Theorem \ref{thm1.4} can be completed in terms of
(\ref{eqn00-1-4}) together with the methods in the proof of Theorem
\ref{thm1.1}. \qed

\

{\sl Proof of Theorem \ref{thm1.6}.}\ Suppose $f\not\equiv g$ and
set $f_1:=f$ and $f_2:=g$. Take $r'_m$ in $[r_m/2,r_m]$ outside the exceptional set
in Cartan's second main theorem
for $f_1$ and $f_2$. Below we always take $r=r'_m$.

Note that $P_i\not\equiv
0$ for $i=1,2,...,2n+3$. Similar to (\ref{eqn00-1-2}) and (\ref{eqn00-1-3}), we have, for $l=1,2$,
\begin{eqnarray*}
&&\sum_{\substack{j=1\\j\neq
i,\sigma(i)}}^{2n+3}(N^{1)}(r^{\sigma},H_j,f_l)-N^{1)}(2r,H_j,f_l))\\
&&+\sum_{j=
i,\sigma(i)}\sum_{l'=1}^2(N^{n)}(r^{\sigma},H_j,f_{l'})-N^{n)}(2r,H_j,f_{l'}))\\
&&-n\sum_{j=
i,\sigma(i)}(N^{1)}(r^{\sigma},H_j,f_{l})-N^{1)}(2r,H_j,f_{l}))\le N(r^{\sigma},0,P_i)\le T(r^{\sigma})+O(1).
\end{eqnarray*}
Taking summation of the above inequality over $1\le i\le 2n+3$ and
noting that $\sigma$ is bijective, we have
\begin{eqnarray*}
&&\sum_{j=1}^{2n+3}(N^{1)}(r^{\sigma},H_j,f_l)-N^{1)}(2r,H_j,f_l))\\
&&+2\sum_{j=1}^{2n+3}\sum_{l'=1}^2(N^{n)}(r^{\sigma},H_j,f_{l'})-N^{n)}(2r,H_j,f_{l'}))\le (2n+3)T(r^{\sigma})+O(1)
\end{eqnarray*}
for $l=1,2$. Hence,
\begin{eqnarray*}
&&(2+\frac{1}{2n})\sum_{j=1}^{2n+3}\sum_{l=1}^2N^{n)}(r^{\sigma},H_j,f_l)\le (2n+3)T(r^{\sigma})\\
&+&\frac{1}{2}\sum_{j=1}^{2n+3}\sum_{l=1}^2N^{1)}(2r,H_j,f_{l})+2\sum_{j=1}^{2n+3}\sum_{l=1}^2N^{n)}(2r,H_j,f_{l})+O(1)\\
&\le&(2n+3)T(r^{\sigma})+\frac{5}{2}(2n+3)T(2r)+O(1).
\end{eqnarray*}
By Cartan's second main theorem, we have
$$
(2+\frac{1}{2n})(n+2)T(r^{\sigma})\le (2n+3)T(r^{\sigma})+\frac{5}{2}(2n+3)T(2r)+O(\log^+rT(r^\sigma)).
$$
Note that $T(r^\sigma)\geq \frac{\sigma\log r}{\log 2r}
T(2r)+(1-\sigma)T(1)$. It implies that
$$
\sigma(\frac{3}{2}+\frac{1}{n})\le 5n+\frac{15}{2}
$$
and a contradiction is derived. \qed

\section{Proofs of Theorems \ref{thm1.5}, \ref{thm1.8} and \ref{thm1.10}}

To prove Theorems \ref{thm1.5}, \ref{thm1.8} and \ref{thm1.10}, we need some preparations.

Let $G$ be a torsion free abelian group and $A=(a_1,a_2,...,a_q)$ be
a $q$-tuple of elements $a_i$ in $G$. Let $q\geq r>s> 1.$ We say
that the $q$-tuple $A$\ has the property $(P_{r,s})$ if any $r$
elements $a_{l(1)},a_{l(2)},...,a_{l(r)}$ in $A$ satisfy the condition
that for any given $i_1,i_2,...,i_s\ (1\leq i_1<i_2<... <i_s\leq r),$
there exist $j_1,j_2,...,j_s\ (1\leq j_1<j_2<... <j_s\leq r)$ with
$\{i_1,i_2,...,i_s\} \neq \{j_1,j_2,...,j_s\}$ such that $a_{l(i_1)}a_{l(i_2)}
\cdots a_{l(i_s)}=a_{l(j_1)}a_{l(j_2)} \cdots a_{l(j_s)}.$ The following lemma can be found in \cite{Fujimoto4}.

\begin{lem}\label{lem4.1} Let $G$ be a torsion free abelian group and
$A=(a_1,a_2,...,a_q)$ be a $q$-tuple of elements $a_i$ in $G$. If $A$
has the property $(P_{r,s})$\ for some $r,s$\ with $q\geq r>s> 1,$
then there exist $i_1,i_2,...,i_{q-r+2}$ with $1\leq
i_1<i_2<...<i_{q-r+2}\leq q$ such that
$a_{i_1}=a_{i_2}=...=a_{i_{q-r+2}}$.
\end{lem}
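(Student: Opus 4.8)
The plan is to put a translation-invariant total order on $G$ and translate the multiplicative condition $(P_{r,s})$ into a statement about sorted products, where it becomes transparent. Since $G$ is torsion-free abelian it is orderable (by a classical theorem of Levi; concretely, one may embed $G$ into the $\mathbb{Q}$-vector space $G\otimes\mathbb{Q}$ and fix a basis to impose a lexicographic order). Fix such a total order $\le$ compatible with the group operation, so that products respect $\le$ (i.e. $x\le y$ and $x'\le y'$ imply $xx'\le yy'$). Relabelling, I write the sorted tuple as $a_{(1)}\le a_{(2)}\le\cdots\le a_{(q)}$.

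The first key step is to interpret $(P_{r,s})$ for a single $r$-element subset, written in sorted form $c_1\le c_2\le\cdots\le c_r$. Applying the property to the distinguished choice $I=\{1,2,\dots,s\}$, which realizes the minimal $s$-fold product $c_1c_2\cdots c_s$, produces a different $s$-subset $J$ with the same product. Because products respect the order, any $s$-subset has product $\ge c_1\cdots c_s$, so $J$ must again be a minimizer. An elementary count of minimizing subsets then shows a second minimizer exists only when $c_s=c_{s+1}$. Thus $(P_{r,s})$ forces $c_s=c_{s+1}$ for \emph{every} $r$-element subset.

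The second step feeds this back into the global sorted tuple via sliding windows. For each $j$ with $1\le j\le q-r+1$, apply the previous conclusion to the window $a_{(j)}\le a_{(j+1)}\le\cdots\le a_{(j+r-1)}$; its $s$-th and $(s+1)$-st smallest entries are $a_{(j+s-1)}$ and $a_{(j+s)}$, whence $a_{(j+s-1)}=a_{(j+s)}$. Chaining these equalities over $j=1,\dots,q-r+1$ yields
\[
a_{(s)}=a_{(s+1)}=\cdots=a_{(q-r+s+1)},
\]
a block of exactly $q-r+2$ equal entries; their original indices furnish the desired $i_1<i_2<\cdots<i_{q-r+2}$.

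The main obstacle I anticipate is the combination of the orderability reduction with the bookkeeping in the single-subset step: one must verify that "some \emph{different} $s$-subset attains the minimal product" genuinely forces the boundary tie $c_s=c_{s+1}$, handling repeated values correctly. This is the count $\binom{t}{s-p}>1$ of minimizing subsets, where $p$ and $t$ denote the numbers of entries strictly below and equal to $c_s$, respectively; one checks $p+t>s$ is equivalent to $c_s=c_{s+1}$. Once the ordered framework is in place the remainder is purely formal. I note in passing that only the minimal-product instance of $(P_{r,s})$ is used; the maximal-product instance gives the symmetric constraint $c_{r-s}=c_{r-s+1}$, which is redundant here.
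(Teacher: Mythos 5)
Your proof is correct. A point of comparison worth knowing: the paper itself contains no proof of this lemma --- it is quoted verbatim from Fujimoto \cite{Fujimoto4} --- and your argument is essentially the classical ordering proof used there, so you have in effect supplied the missing argument rather than diverged from it. All the delicate points check out: torsion-freeness gives the injective embedding $g\mapsto g\otimes 1$ into the $\mathbb{Q}$-vector space $G\otimes\mathbb{Q}$, whence a translation-invariant total order; in such an order a product of elements $\geq e$ equals $e$ only if every factor is $e$, which yields the rigidity you need (if $k_1<\cdots<k_s$ is a minimizing index set then $c_{k_t}=c_t$ for all $t$); and since any $J\neq\{1,\dots,s\}$ of size $s$ has $\max J\geq s+1$, rigidity forces $c_{\max J}=c_s$ and hence the boundary tie $c_s=c_{s+1}$ directly --- this gives a slightly cleaner route to the tie than the $\binom{t}{s-p}$ count, though your count is also correct ($p\leq s-1$, so a second minimizer exists iff $p+t>s$ iff $c_s=c_{s+1}$). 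The window bookkeeping is right as well: windows exist for $1\leq j\leq q-r+1$ because $q\geq r$, the $(s+1)$-st entry of a window exists because $s<r$, and chaining yields the block $a_{(s)}=a_{(s+1)}=\cdots=a_{(q-r+s+1)}$ of exactly $(q-r+s+1)-s+1=q-r+2$ equal entries, whose original positions give the required $i_1<\cdots<i_{q-r+2}$. One small observation: your sorting of each window is legitimate because the hypothesis and conclusion are invariant under relabeling the tuple, so one may assume at the outset that the whole tuple is sorted, and then every window of $r$ consecutive entries is automatically sorted; this is exactly how you use the property, and it is consistent with its statement, which requires a \emph{different index set} $\{j_1,\dots,j_s\}$ (not merely a different multiset of values), which is what your minimizer analysis tracks.
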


We note that second main theorem holds for all $r>1$ except for a set of $r$ with finite
linear measure. In the proofs of Theorems \ref{thm1.5}, \ref{thm1.8} and \ref{thm1.10}, we use varied
second main theorems finite times, which causes an exceptional set of $r$ with finite
linear measure. We shall avoid this set in the proof.

\

{\sl Proof of Theorem \ref{thm1.5}.} \ Suppose $f\not\equiv g$ and set $f_1:=f$ and
$f_2:=g$. So $P_i\not\equiv 0$
for $i=1,2,...,2n+2$.

Assume that $f_1$ and $f_2$ satisfy

i) $ \min\{\nu_{\langle{\bf f}_1,{\bf a}_j\rangle},n+1\}=\min\{\nu_{\langle{\bf f}_2,{\bf a}_j\rangle},n+1\}$ on
$B(z,\varepsilon|z|)$ for $j=1,2,..., 2n+2$;

ii) $ f_1=f_2$ on $B(z,\varepsilon|z|)\cap\bigcup_{j=1}^{2n+2}
f^{-1}(H_j)$.

\noindent We can establish the following inequality, whose proof we
invite the reader to complete: For $z=re^{i\theta}$, $l=1,2$ and
$i=1,2,...,2n+2$, by using the same notations $\delta, \kappa$ and
$\mathcal{Z}_{\delta}=\mathcal{Z}_{\delta}(\theta)$ as in the proof of
Theorem \ref{thm1.1}, we have
$$\sum_{\substack{j=1\\j\neq
i,\sigma(i)}}^{2n+2}C_{\mathcal{Z}_{\delta}}^{1)}(\kappa^2
r,r;H_j,f_l)+\sum_{j=
i,\sigma(i)}C_{\mathcal{Z}_{\delta}}^{n+1)}(\kappa^2 r,r;H_j,f_{l})$$\begin{eqnarray}\le C_{\mathcal{Z}_{\delta}}(\kappa^2 r,r;0,P_{i})\le\sum_{l'=1}^2S_{\mathcal{Z}_{\delta}}(r,f_{l'})+O(1).\label{eqn01-1-1}\end{eqnarray}

Summing-up the above inequality over $1\le i\le 2n+2$ and noting
that $\sigma$ is bijective, we obtain
$$2n\sum_{j=1}^{2n+2}C_{\mathcal{Z}_{\delta}}^{1)}(\kappa^2
r,r;H_j,f_l)+2\sum_{j=1}^{2n+2}C_{\mathcal{Z}_{\delta}}^{n+1)}(\kappa^2
r,r;H_j,f_{l})$$$$
\le
(2n+2)\sum_{l'=1}^2S_{\mathcal{Z}_{\delta}}(r,f_{l'})+O(1),\
l=1,2.$$ By noting that $C_{\mathcal{Z}_{\delta}}^{1)}(\kappa^2 r,r;H_j,f_l)\ge
\frac{1}{n}C_{\mathcal{Z}_{\delta}}^{n)}(\kappa^2 r,r;H_j,f_l)$, we have, for $l=1,2$,
\begin{eqnarray*}
&&4\sum_{j=1}^{2n+2}C_{\mathcal{Z}_{\delta}}^{n)}(\kappa^2 r,r;H_j,f_l)
+2\sum_{j=1}^{2n+2}(C_{\mathcal{Z}_{\delta}}^{n+1)}(\kappa^2 r,r;H_j,f_{l})
-C_{\mathcal{Z}_{\delta}}^{n)}(\kappa^2 r,r;H_j,f_l))\\
&=&4\sum_{j=1}^{2n+2}C_{\mathcal{Z}_{\delta}}^{n)}(\kappa^2 r,r;H_j,f_l)
+2\sum_{j=1}^{2n+2}C_{\mathcal{Z}_{\delta},>n}^{1)}(\kappa^2 r,r;H_j,f_{l})
\end{eqnarray*}
\begin{eqnarray*}
\le (2n+2)\sum_{l'=1}^2S_{\mathcal{Z}_{\delta}}(r,f_{l'})+O(1),
\end{eqnarray*}
where $C_{\mathcal{Z}_{\delta},>n}^{1)}(\kappa^2 r,r;H_j,f_{l})$ is
the counting function in which we only consider the zeros of
$\langle{\bf f}_l,{\bf a}_j\rangle$ with multiplicity $>n$.

Using Theorem \ref{thm2.1} yields
\begin{eqnarray}
&&(4n+4)\sum_{l=1}^2S_{\mathcal{Z}_{\delta}}(r,f_{l})
+2\sum_{l=1}^2\sum_{j=1}^{2n+2}C_{\mathcal{Z}_{\delta},>n}^{1)}(\kappa^2 r,r;H_j,f_{l})\nonumber\\
&\le&(4n+4)\sum_{l=1}^2S_{\mathcal{Z}_{\delta}}(r,f_{l})
+4\sum_{l=1}^2\sum_{j=1}^{2n+2}C_{\mathcal{Z}_{\delta}}^{n)}(\kappa
r;H_j,f_{l})\nonumber\\
&&+4\frac{\kappa^{-4\omega}-1}{
r^{\omega}\log \kappa^{-1}}\sum_{j=1}^{2n+2}\sum_{l=1}^2N_{\mathcal{Z}_\delta}(\kappa
r,H_j,f_l)+4\sum_{l=1}^2R_{\mathcal{Z}_{\delta}}(r,f_{l})+O(1).\label{eqn01-1-2}
\end{eqnarray}
For simplicity, we set $N_1:=\frac{\kappa^{-4\omega}-1}{
r^{\omega}\log \kappa^{-1}}\sum_{l=1}^2\sum_{j=1}^{2n+2}N_{\mathcal{Z}_\delta}(\kappa
r,H_j,f_l)$. It implies that
\begin{eqnarray}
\sum_{l=1}^2\sum_{j=1}^{2n+2}C_{\mathcal{Z}_{\delta},>n}^{1)}(\kappa^2
r,r;H_j,f_{l})&\le&
2\sum_{l=1}^2\sum_{j=1}^{2n+2}C_{\mathcal{Z}_{\delta}}^{n)}(\kappa
r;H_j,f_{l})+2N_1\nonumber\\
&&+2\sum_{l=1}^2R_{\mathcal{Z}_{\delta}}(r,f_{l})+O(1).\label{eqn00-2-2}
\end{eqnarray}

Assume that ${\bf f}_1=({f_1}_0,{f_1}_1,...,{f_1}_n)$ and
${\bf f}_2=({f_2}_0,{f_2}_1,...,{f_2}_n)$ are the reduced representations of $f_1$
and $f_2$, respectively.

Denote by $\mathcal{M}^*$ the abelian multiplicative group of all
nonzero meromorphic functions on ${\mathbb{C}}$ and
${\mathbb{C}}^*={\mathbb{C}}\setminus\{0\}$. Then the multiplicative
group $\mathcal{M}^*/{\mathbb{C}}^*$ is a torsion free abelian
group.

\

{\bf Step 1.}  In this step, we will show what has the property
$(P_{r,s}).$

Define $h_i=\frac{\langle{\bf f}_1,{\bf a}_i\rangle}{\langle{\bf f}_2,{\bf a}_i\rangle}$, $i=1,2,...,2n+2$.
Although each $h_i$ is dependent on the choice of reduced
representations of $f_1$ and $f_2,$ the ratio $h_p/h_q=\langle{\bf f}_1,{\bf a}_p\rangle
/\langle{\bf f}_2,{\bf a}_p\rangle\cdot \langle{\bf f}_2,{\bf a}_q\rangle/\langle{\bf f}_1,{\bf a}_q\rangle$\ is uniquely determined
independent of any choice of reduced representations of $f_1$ and
$f_2$. By the definition, we have
$$
\sum_{m=0}^na_{im}{f_1}_m-h_i\sum_{m=0}^na_{im}{f_2}_m=0\ \
(i=1,2,...,2n+2).
$$
Therefore
$$
\mbox{det}(a_{i0},a_{i1},...,a_{in},a_{i0}h_i,a_{i1}h_i,...,a_{in}h_i;\ \ 1\leq
i\leq 2n+2)=0.
$$
Let $\mathcal{I}$ be the set of all combinations
$I=\{i_1,i_2,...,i_{n+1}\}$ with $1\leq i_1<i_2<... <i_{n+1}\leq 2n+2$
of indices $1,2,...,2n+2$. For any $I=\{i_1,i_2,...,i_{n+1}\}\in
\mathcal{I}$, define
$$
h_I:=h_{i_1}h_{i_2}\cdots h_{i_{n+1}}
$$
and
\begin{eqnarray*}
A_I:=&&(-1)^{(n+1)(n+2)/2+i_1+i_2+...+i_{n+1}}\mbox{det}(a_{{i_r}l};1\leq
r\leq {n+1},0\leq l\leq n)\\
&\times & \mbox{det}(a_{{j_s}l};1\leq s\leq {n+1},0\leq l\leq n),
\end{eqnarray*}
where $J=\{j_1,j_2,...,j_{n+1}\}\in \mathcal{I}$ such that $I\cup
J=\{1,2,...,2n+2\}$. Then we have
$$
\sum_{I\in \mathcal{I}}A_Ih_I=0,
$$
where $A_I\neq 0$  and $A_I/A_J\in {\mathbb{C}}^* $ for any $I,J\in
\mathcal{I}$.

Now we show that, for each $I\in \mathcal{I},$ there exists $J\in
\mathcal{I}$ with $I\neq J$ such that $\frac{h_I}{h_J}\in
{\mathbb{C}}^*$.

Let $I_0=I$. Denote by $\tau$ the minimal number satisfying the
following:

There exist $\tau$ elements $I_1,I_2,...,I_{\tau}\in\mathcal{I}\
\backslash{I_0}$ and $\tau$ nonzero constants $b_i$ such that
$h_{I_0}=\sum_{i=1}^{\tau}b_ih_{I_i}$. Let $b_0=-1$. Then
\begin{eqnarray}
\sum_{i=0}^{\tau}b_ih_{I_i}=0.\label{eq45}
\end{eqnarray}
Since $h_{I_0}\not\equiv 0$ and by the minimality of $\tau$, it
follows that the family $\{h_{I_1},h_{I_2},...,h_{I_{\tau}}\}$ is
linearly independent over ${\mathbb{C}}$.

Now it suffices to show that $\tau=1$. Assume that $\tau\geq 2$.

Set $I=\bigcap_{i=0}^{\tau}I_i$, $I'_i=I_i\backslash I\neq \emptyset
\ (0\leq i\leq \tau)$ and $\tilde I=\bigcup _{i=0}^{\tau}I'_i$,
$I'=\bigcap_{i=1}^{\tau}I'_i$, $I''_i=I'_i\backslash I'\neq
\emptyset\ (1\leq i\leq \tau)$. We have
\begin{eqnarray}
\frac{h_{I'_0}}{h_{I'}}=\sum_{i=1}^{\tau}b_ih_{I''_i}.\label{eq46}
\end{eqnarray}

For $\tau\geq 2$, we can construct a holomorphic curve
$h:{\mathbb{C}}\rightarrow {\mathcal{P}}^{\tau-1}({\mathbb{C}})$ with
a reduced representation
$$
{\bf h}=\left(b_1\tilde hh_{I''_1},b_2\tilde hh_{I''_2},..., b_{\tau}\tilde
hh_{I''_{\tau}}\right),
$$
where $\tilde h$ is  holomorphic on ${\mathbb{C}}$ such that
$(b_1\tilde hh_{I''_1},b_2\tilde hh_{I''_2},..., b_{\tau}\tilde
hh_{I''_{\tau}})$
becomes a reduced representation. We have $\nu_{\tilde
h}=\sum_{i\in{\bigcup_{1\leq j\leq \tau}I''_j}}\nu_{h_i}^\infty$.

It is easy to see that the holomorphic curve $h$ is linearly
non-degenerate over ${\mathbb{C}}$ by (\ref{eq45}). Consider the
hyperplanes $\tilde{H}_i=\{w_i=0\},i=1,2,...,\tau$, and
$\tilde{H}_{\tau+1}=\{w_1+w_2+...+w_{\tau}=0\}$. We have
$$
\nu_{\langle{\bf h},\tilde{\bf a}_i\rangle}=\nu_{\tilde h h_{I''_i}}\ \ i=1,2,...,\tau,
$$
and
$$
\nu_{\langle{\bf h},\tilde{\bf a}_{\tau+1}\rangle}=\nu_{\tilde h \frac{h_{I'_0}}{h_{I'}}}
$$
by (\ref{eq46}).

Now, we estimate $\nu^{1)}_{\tilde h h_{I''_i}}$ and
$\nu^{1)}_{\tilde h \frac{h_{I'_0}}{h_{I'}}}$.

Define $I''=\bigcup_{i=1}^{\tau}I''_i.$  Then
$$
\nu^{1)}_{\tilde{h}h_{I''_{i}}}= \nu^{1)}_{h_{I_i ''}}+\nu^{1)}_{
\frac{1}{h_ {{I''}\backslash {I''_i}}}}\ \mbox{ and} \
\nu^{1)}_{\tilde {h}\frac{h_{I'_0} }{h_{I'}}}=\nu^{1)}_{h_{I_0
'}}+\nu^{1)}_{ \frac{1}{h_ {{(I'' \bigcup I')}\backslash {I'_0}}}}.
$$

For each $J\subset\{1,2,...,2n+2\}$, put
$J^c=\{1,2,...,2n+2\}\backslash J$. It is easy to see that
$$
I''_i\subset I_i\ \ \mbox{and}\ \ I''\backslash I''_i\subset I^c_i\
(1\leq i\leq \tau),
$$
$$
I'_0\subset I_0\ \ \mbox{and}\ \ (I''\cup I')\backslash I'_0=\tilde
I \backslash(I_0\backslash I)=(\tilde I\cup I)\backslash I_0\subset
I_0^c.
$$
Hence
$$
\nu^{1)}_{\tilde{h}h_{I''_{i}}}\leq \nu^{1)}_{
h_{I_i}}(r)+\nu^{1)}_{\frac{1}{h_{I^c_i}}} \ \mbox{and}\
\nu^{1)}_{\tilde {h}\frac{h_{I'_0} }{h_{I'}}}\leq \nu^{1)}_{
h_{I_0}}+\nu^{1)}_{\frac{1}{h_{I^c_0}}}.
$$

Now we consider the zeros and poles of $h_i$. If $z\in
B(z,\varepsilon|z|)$ is a zero(a pole) of $h_i$, then
$\nu_{\langle{\bf f}_1,{\bf a}_i\rangle}(z)\neq \nu_{\langle{\bf f}_2,{\bf a}_i\rangle}(z)$. Hence
$\nu_{\langle{\bf f}_l,{\bf a}_i\rangle}(z)>n$. It follows from (\ref{eqn00-2-2}) that
\begin{eqnarray}
C_{\mathcal{Z}_{\delta}}^{1)}(\kappa^2 r,r;0,\tilde{h}h_{I''_{i}})&\le&
C_{\mathcal{Z}_{\delta}}^{1)}(\kappa^2 r,r;0,h_{I_i})+C_{\mathcal{Z}_{\delta}}^{1)}(\kappa^2 r,r;\infty,h_{I^c_i})\nonumber\\
&\le&\sum_{l=1}^2\sum_{j=1}^{2n+2}C_{\mathcal{Z}_{\delta},>n}^{1)}(\kappa^2
r,r;H_j,f_{l})\nonumber\\
&\le&
2\sum_{l=1}^2\sum_{j=1}^{2n+2}C_{\mathcal{Z}_{\delta}}^{n)}(\kappa
r;H_j,f_{l})+2N_1+2\sum_{l=1}^2R_{\mathcal{Z}_{\delta}}(r,f_{l})+O(1)\label{eqn00-2-3}
\end{eqnarray}
and
\begin{eqnarray}
C_{\mathcal{Z}_{\delta}}^{1)}(\kappa^2 r,r;0,\tilde
{h}\frac{h_{I'_0} }{h_{I'}})&\le& C_{\mathcal{Z}_{\delta}}^{1)}(\kappa^2 r,r;0,h_{I_0})+C_{\mathcal{Z}_{\delta}}^{1)}(\kappa^2 r,r;\infty,h_{I^c_0})\nonumber\\
&\le&\sum_{l=1}^2\sum_{j=1}^{2n+2}C_{\mathcal{Z}_{\delta},>n}^{1)}(\kappa^2
r,r;H_j,f_{l})\nonumber\\
&\le&
2\sum_{l=1}^2\sum_{j=1}^{2n+2}C_{\mathcal{Z}_{\delta}}^{n)}(\kappa
r;H_j,f_{l})+2N_1+2\sum_{l=1}^2R_{\mathcal{Z}_{\delta}}(r,f_{l})+O(1).\label{eqn00-2-4}
\end{eqnarray}
Using Theorem \ref{thm2.1} for $h$ and
$\{\tilde{H}_i\}_{i=1}^{\tau+1}$ deduces that
\begin{eqnarray}
S_{\mathcal{Z}_{\delta}}(r,h)&\le&\sum_{i=1}^{\tau+1}C^{\tau-1)}_{\mathcal{Z}_{\delta}}(r;\tilde{H}_i,h)+R_{\mathcal{Z}_{\delta}}(r,h)\nonumber\\
&\le&(\tau-1)\sum_{i=1}^{\tau}C_{\mathcal{Z}_{\delta}}^{1)}(\kappa^2
r,r;0,\tilde{h}h_{I''_{i}})+(\tau-1)C_{\mathcal{Z}_{\delta}}^{1)}(\kappa^2
r,r;0,\tilde
{h}\frac{h_{I'_0} }{h_{I'}})\nonumber\\
&+&\sum_{i=1}^{\tau+1}C^{\tau-1)}_{\mathcal{Z}_{\delta}}(\kappa r;\tilde{H}_i,h)+\frac{\kappa^{-4\omega}-1}{
r^{\omega}\log \kappa^{-1}}\sum_{i=1}^{\tau+1}N_{\mathcal{Z}_\delta}(\kappa
r,\tilde{H}_i,h)+R_{\mathcal{Z}_{\delta}}(r,h)\nonumber
\end{eqnarray}
\begin{eqnarray}
&\le&2(\tau^2-1)\left(\sum_{l=1}^2\sum_{j=1}^{2n+2}C_{\mathcal{Z}_{\delta}}^{n)}(\kappa
r;H_j,f_{l})+N_1+\sum_{l=1}^2R_{\mathcal{Z}_{\delta}}(r,f_{l})\right)+O(1)\nonumber\\
&+&\sum_{i=1}^{\tau+1}C^{\tau-1)}_{\mathcal{Z}_{\delta}}(\kappa
r;\tilde{H}_i,h)+\frac{\kappa^{-4\omega}-1}{
r^{\omega}\log \kappa^{-1}}\sum_{i=1}^{\tau+1}N_{\mathcal{Z}_\delta}(\kappa
r,\tilde{H}_i,h)+R_{\mathcal{Z}_{\delta}}(r,h).\label{eqn00-2-5}
\end{eqnarray}

On the other hand, we have
\begin{eqnarray*}
S_{\mathcal{Z}_{\delta}}(r,h)&\geq&
S_{\mathcal{Z}_{\delta}}\left(r,\frac{\langle{\bf h},\tilde{\bf
a}_1\rangle}{\langle{\bf h},\tilde{\bf a}_2\rangle}\right)+O(1)
=S_{\mathcal{Z}_{\delta}}\left(r,\frac{h_{I''_1}}{h_{I''_2}}\right)+O(1)\\
&=&S_{\mathcal{Z}_{\delta}}\left(r,\frac{h_{I_1}}{h_{I_2}}\right)+O(1)\geq
C_{\mathcal{Z}_{\delta}}^{1)}(r;1,\frac{h_{I_1}}{h_{I_2}})+O(1),
\end{eqnarray*}
\begin{eqnarray*}
S_{\mathcal{Z}_{\delta}}(r,h)&\geq&
S_{\mathcal{Z}_{\delta}}\left(r,\frac{\langle{\bf h},\tilde{\bf a}_2\rangle}{\langle{\bf h},\tilde{\bf a}_{\tau+1}\rangle}\right)+O(1)=S_{\mathcal{Z}_{\delta}}\left(r,\frac{h_{I''_2}}{h_{I''_0}}\right)+O(1)\\
&=&S_{\mathcal{Z}_{\delta}}\left(r,\frac{h_{I_2}}{h_{I_0}}\right)+O(1)\geq
C_{\mathcal{Z}_{\delta}}^{1)}(r;1,\frac{h_{I_2}}{h_{I_0}})+O(1)
\end{eqnarray*}
and
\begin{eqnarray*}
S_{\mathcal{Z}_{\delta}}(r,h)&\geq&
S_{\mathcal{Z}_{\delta}}\left(r,\frac{\langle{\bf h},\tilde{\bf a}_{\tau+1}\rangle}{\langle{\bf h},\tilde{\bf a}_1\rangle}\right)+O(1)=S_{\mathcal{Z}_{\delta}}\left(r,\frac{h_{I''_0}}{h_{I''_1}}\right)+O(1)\\
&=&S_{\mathcal{Z}_{\delta}}\left(r,\frac{h_{I_0}}{h_{I_1}}\right)+O(1)\geq
C_{\mathcal{Z}_{\delta}}^{1)}(r;1,\frac{h_{I_0}}{h_{I_1}})+O(1).
\end{eqnarray*}

Since $ f_1=f_2$ on $B(z,\varepsilon|z|)\cap\bigcup_{j=1}^{2n+2}
f^{-1}(H_j)$. That is to say, $\frac{h_I}{h_J}=1$ on the set
$B(z,\varepsilon|z|)\cap\bigcup_{j\in((I\cup J)\backslash(I\cap
J))^c}f^{-1}(H_j)$. By
$$
((I_1\cup I_2)\backslash(I_1\cap I_2))^c\cup((I_2\cup
I_0)\backslash(I_2\cap I_0))^c\cup ((I_0\cup I_1)\backslash(I_0\cap
I_1))^c=\{1,2,...,2n+2\},
$$
it implies that
\begin{eqnarray}
3S_{\mathcal{Z}_{\delta}}(r,h)&\geq&C_{\mathcal{Z}_{\delta}}^{1)}(r;1,\frac{h_{I_1}}{h_{I_2}})+C_{\mathcal{Z}_{\delta}}^{1)}(r;1,\frac{h_{I_2}}{h_{I_0}})+C_{\mathcal{Z}_{\delta}}^{1)}(r;1,\frac{h_{I_0}}{h_{I_1}})+O(1)\nonumber\\
&\ge&\sum_{j=1}^{2n+2}C_{\mathcal{Z}_{\delta}}^{1)}(\kappa^2
r,r;H_j,f_{l})+O(1)\nonumber\\
&\ge &
\frac{1}{n}\sum_{j=1}^{2n+2}C_{\mathcal{Z}_{\delta}}^{n)}(\kappa^2
r,r;H_j,f_{l})+O(1),\ \ l=1,2.\label{eqn00-2-6}
\end{eqnarray}
Using Theorem \ref{thm2.1} again, we have
\begin{eqnarray}
\frac{n+1}{n}\sum_{l=1}^2S_{\mathcal{Z}_{\delta}}(r,f_l)&\le&\frac{1}{n}\sum_{l=1}^2\sum_{j=1}^{2n+2}C_{\mathcal{Z}_{\delta}}^{n)}(\kappa^2
r,r;H_j,f_{l})+\frac{1}{n}\sum_{l=1}^2\sum_{j=1}^{2n+2}C_{\mathcal{Z}_{\delta}}^{n)}(\kappa^2
r;H_j,f_{l})\nonumber\\
&&+\frac{1}{n}N_1+\frac{1}{n}\sum_{l=1}^2R_{\mathcal{Z}_{\delta}}(r,f_l).\label{eqn00-2-7}
\end{eqnarray}
Combining (\ref{eqn00-2-6}) and (\ref{eqn00-2-7}) yields
\begin{eqnarray}
\frac{n+1}{n}\sum_{l=1}^2S_{\mathcal{Z}_{\delta}}(r,f_l)&\le&
6S_{\mathcal{Z}_{\delta}}(r,h)+\frac{1}{n}\sum_{l=1}^{2}\sum_{j=1}^{2n+2}C_{\mathcal{Z}_{\delta}}^{n)}(\kappa
r;H_j,f_{l})\nonumber\\
&&+\frac{1}{n}N_1+\frac{1}{n}\sum_{l=1}^{2}R_{\mathcal{Z}_{\delta}}(r,f_l)
.\label{eqn00-2-8}
\end{eqnarray}
From (\ref{eqn00-2-5}) and (\ref{eqn00-2-8}), it follows that
\begin{eqnarray}
&&\frac{n+1}{6n}\sum_{l=1}^2S_{\mathcal{Z}_{\delta}}(r,f_l)\nonumber\\&\le& \left(2(\tau^2-1)+\frac{1}{6n}\right)\left(\sum_{l=1}^{2}\sum_{j=1}^{2n+2}C_{\mathcal{Z}_{\delta}}^{n)}(\kappa r;H_j,f_{l})+N_1+\sum_{l=1}^2R_{\mathcal{Z}_{\delta}}(r,f_l)\right)\label{eqn00-2-9}\\
&&+\sum_{i=1}^{\tau+1}C^{\tau-1)}_{\mathcal{Z}_{\delta}}(\kappa
r;\tilde{H}_i,h)+\frac{\kappa^{-4\omega}-1}{
r^{\omega}\log \kappa^{-1}}\sum_{i=1}^{\tau+1}N_{\mathcal{Z}_\delta}(\kappa
r,\tilde{H}_i,h)+R_{\mathcal{Z}_{\delta}}(r,h).\nonumber
\end{eqnarray}

In order to treat terms $C^{\tau-1)}_{\mathcal{Z}_{\delta}}(\kappa
r;\tilde{H}_i,h)$, $N_{\mathcal{Z}_\delta}(\kappa
r,\tilde{H}_i,h)$ and $R_{\mathcal{Z}_{\delta}}(r,h)$, we need to
compare $T(r,h)$ with $T(r)=\sum_{l=1}^2T(r,f_l)$. Similar to
(\ref{eqn00-2-3}) and (\ref{eqn00-2-4}), we have
\begin{eqnarray*}
&&N^{1)}(r,0,\tilde{h}h_{I''_{i}})\le N^{1)}(r,0,h_{I_i})+N^{1)}(r,\infty,h_{I^c_i})\\
&\le&\sum_{j=1}^{2n+2}N^{1)}(r,H_j,f_{1})+\sum_{j=1}^{2n+2}N^{1)}(r,H_j,f_{2})\le
(2n+2)T(r)+O(1)
\end{eqnarray*}
and
\begin{eqnarray*}
&&N^{1)}(r,0,\tilde
{h}\frac{h_{I'_0} }{h_{I'}})\le N^{1)}(r,0,h_{I_0})+N^{1)}(r,\infty,h_{I^c_0})\\
&\le&\sum_{j=1}^{2n+2}N^{1)}(r,H_j,f_{1})+\sum_{j=1}^{2n+2}N^{1)}(r,H_j,f_{2})\le
(2n+2)T(r)+O(1).
\end{eqnarray*}
Using Cartan's second main theorem for $h$ and
$\{\tilde{H}_j\}_{j=1}^{\tau+1}$, we have
\begin{eqnarray*}
T(r,h) &\leq&(\tau-1)\sum_{i=1}^{\tau}
N^{1)}(r,0,\tilde{h}h_{I''_{i}})+(\tau-1)N^{1)}(r,0,\tilde
{h}\frac{h_{I'_0} }{h_{I'}})+o(T(r,h))\\
&\le&(\tau^2-1)(2n+2)T(r)+o(T(r,h)).
\end{eqnarray*}
Hence, for $r$ big enough,
\begin{eqnarray}
T(r,h)\le 2(\tau^2-1)(2n+2)T(r).\label{eqn00-2-10}
\end{eqnarray}
Thus, by Lemma \ref{lem2.1} and (\ref{eqn00-2-10}), we have
\begin{eqnarray*}
C^{\tau-1)}_{\mathcal{Z}_{\delta}}(\kappa r;\tilde{H}_i,h)&\le&\frac{2\omega}{\kappa^{\omega}}\frac{T(\kappa r,h)}{r^{\omega}}+\omega^2\int_{1}^{\kappa r}\frac{T(t,h)}{t^{\omega+1}}{\rm d}t+O(1)\\
&\le&2(\tau^2-1)(2n+2)\left(\frac{2\omega}{\kappa^{\omega}}\frac{T(\kappa
r)}{r^{\omega}}+\omega^2\int_{1}^{\kappa
r}\frac{T(t)}{t^{\omega+1}}{\rm d}t\right)+O(1),
\end{eqnarray*}
$$
N_{\mathcal{Z}_\delta}(\kappa
r,\tilde{H}_i,h)<e\kappa^s T(r,h)\le 2(\tau^2-1)(2n+2)e\kappa^s T(r)
$$
and
$$
R_{\mathcal{Z}_{\delta}}(r,h)\le K\omega(\log^+T(r,h)+\log^+r+1)\le
K'\omega(\log^+T(r)+\log^+r+1).
$$

Repeating the similar argument in the proof of Theorem \ref{thm1.1}, we can
find a sequence $\{z_m\}$ such that if the conditions in Theorem
\ref{thm1.5} holds for $X=B(\{z_m\};\{\varepsilon_m\})$, then we can
derive a contradiction from (\ref{eqn00-2-9}).

Hence $\tau=1$, i.e., $\frac{h_{I_0}}{h_{I_1}}=b_1\in
{\mathbb{C}}^*$. We have proved that, for each $I\in \mathcal{I}$,
there is $J\in \mathcal{I}\setminus\{I\}$ such that $h_I/h_J\in
{\mathbb{C}}^*$. We conclude that the family
$\{[h_1],[h_2],...,[h_q]\}$ has the property $(P_{2n+2,n+1})$ in the
torsion free abelian group $\mathcal{M}^*/{\mathbb{C}}^*$ by the
definition.

\

{\bf Step 2.} By Lemma \ref{lem4.1}, there exist $2$ elements, we
may assume that they are $[h_1]$ and $[h_{2}]$, such that
$[h_1]=[h_{2}]$. Then $\frac{h_1}{h_2}\in {\mathbb{C}}^*$, i.e.,
$\frac{\langle{\bf f}_1,{\bf a}_1\rangle}{\langle{\bf f}_2,{\bf
a}_1\rangle}=c\frac{\langle{\bf f}_1,{\bf a}_2\rangle}{\langle{\bf
f}_2,{\bf a}_2\rangle}$, where $c\in {\mathbb{C}}^*$.

If $n=1$, we hold Theorem \ref{thm1.8}.

For $n\ge 2$,
$c=1$ by $f_1=f_2$ on $X\cap \bigcup_{j=1}^{2n+2}f^{-1}(H_j)$.

According to the choices of $\{r_m\}$, $\{\delta_m\}$, $\{\kappa_m\}$, $\{\omega_m\}$ and $\{k_m\}$, we have
\begin{eqnarray}
\sum_{l=1}^2\sum_{j=1}^{2n+2}C_{\mathcal{Z}_{\delta_m}^{k_m}}^{n)}(\kappa_mr_m;H_j,f_{l})=o(1)\frac{T(r_m)}{r_m^{\omega_m}},\label{eqnN1}
\end{eqnarray}
\begin{eqnarray}
\frac{\kappa_m^{-4\omega_m}-1}{
r_m^{\omega_m}\log \kappa_m^{-1}}\sum_{l=1}^2\sum_{j=1}^{2n+2}N_{\mathcal{Z}_{\delta_m}^{k_m}}(\kappa_m
r_m,H_j,f_l)=o(1)\frac{T(r_m)}{r_m^{\omega_m}},\label{eqnN2}
\end{eqnarray}
\begin{eqnarray}
\sum_{l=1}^2R_{\mathcal{Z}_{\delta_m}^{k_m}}(r_m,f_{l})=o(1)\frac{T(r_m)}{r_m^{\omega_m}}\label{eqnN3}
\end{eqnarray}
and
\begin{eqnarray}
\sum_{l=1}^2S_{\mathcal{Z}_{\delta_m}^{k_m}}(r_m,f_{l})\ge \frac{\sqrt{2}}{10}\frac{T(r_m)}{r_m^{\omega_m}}.\label{eqnN4}
\end{eqnarray}
Hence, by (\ref{eqn00-2-2}),
$$
\sum_{l=1}^2\sum_{j=1}^{2n+2}C_{\mathcal{Z}_{\delta_m}^{k_m},>n}^{1)}(\kappa_m^2
r_m,r_m;H_j,f_{l})=o(1)\frac{T(r_m)}{r_m^{\omega_m}},
$$
i.e.,
\begin{eqnarray}
C_{\mathcal{Z}_{\delta_m}^{k_m},>n}^{1)}(\kappa_m^2
r_m,r_m;H_j,f_{l})=o(1)\frac{T(r_m)}{r_m^{\omega_m}}\label{eqnN5}
\end{eqnarray}
for $l=1,2$ and $j=1,2,...,2n+2$. According to Si's method, in view of (\ref{eqnN1}), (\ref{eqnN2}), (\ref{eqnN3})
and (\ref{eqnN5}), the both sides of (\ref{eqn01-1-2}) are equal up to $o(1)\frac{T(r_m)}{r_m^{\omega_m}}$. That
means those inequalities used to derive (\ref{eqn01-1-2}) become equalities up to $o(1)\frac{T(r_m)}{r_m^{\omega_m}}$.
Hence, we have the following equations:
\begin{eqnarray}
nC_{\mathcal{Z}_{\delta_m}^{k_m}}^{1)}(\kappa_m^2
r_m,r_m;H_j,f_l)= C_{\mathcal{Z}_{\delta_m}^{k_m}}^{n)}(\kappa_m^2
r_m,r_m;H_j,f_l)+o(1)\frac{T(r_m)}{r_m^{\omega_m}},\label{eqnN6}
\end{eqnarray}
\begin{eqnarray}
(n+1)S_{\mathcal{Z}_{\delta_m}^{k_m}}(r_m,f_{l})=\sum_{j=1}^{2n+2}C_{\mathcal{Z}_{\delta_m}^{k_m}}^{n)}(\kappa_m^2
r_m,r_m;H_j,f_l)+o(1)\frac{T(r_m)}{r_m^{\omega_m}}\label{eqnN7}
\end{eqnarray}
and
$$\sum_{\substack{j=1\\j\neq
i,\sigma(i)}}^{2n+2}C_{\mathcal{Z}_{\delta_m}^{k_m}}^{1)}(\kappa_m^2
r_m,r_m;H_j,f_l)+\sum_{j=
i,\sigma(i)}C_{\mathcal{Z}_{\delta_m}^{k_m}}^{n+1)}(\kappa_m^2 r_m,r_m;H_j,f_{l})+o(1)\frac{T(r_m)}{r_m^{\omega_m}}$$
\begin{eqnarray}= C_{\mathcal{Z}_{\delta_m}^{k_m}}(\kappa_m^2 r_m,r_m;0,P_{i})=\sum_{l'=1}^2S_{\mathcal{Z}_{\delta_m}^{k_m}}(r_m,f_{l'})+o(1)\frac{T(r_m)}{r_m^{\omega_m}},\label{eqnN8}\end{eqnarray}
where $l=1,2$ and $i,j=1,2,...,2n+2$. Therefore in view of (\ref{eqnN5}), we have
\begin{eqnarray}
C_{\mathcal{Z}_{\delta_m}^{k_m}}(\kappa_m^2
r_m,r_m;H_j,f_l)&=& C_{\mathcal{Z}_{\delta_m}^{k_m}}^{n+1)}(\kappa_m^2
r_m,r_m;H_j,f_l)+o(1)\frac{T(r_m)}{r_m^{\omega_m}}\nonumber\\
&=& C_{\mathcal{Z}_{\delta_m}^{k_m}}^{n)}(\kappa_m^2
r_m,r_m;H_j,f_l)+o(1)\frac{T(r_m)}{r_m^{\omega_m}}\label{eqnN9}
\end{eqnarray}
for $l=1,2$ and $j=1,2,...,2n+2$. Combining (\ref{eqnN7}) and (\ref{eqnN9}) yields
\begin{eqnarray*}
(n+1)S_{\mathcal{Z}_{\delta_m}^{k_m}}(r_m,f_{1})&=&\sum_{j=1}^{2n+2}C_{\mathcal{Z}_{\delta_m}^{k_m}}^{n)}(\kappa_m^2
r_m,r_m;H_j,f_1)+o(1)\frac{T(r_m)}{r_m^{\omega_m}}\\
&=&\sum_{j=1}^{2n+2}C_{\mathcal{Z}_{\delta_m}^{k_m}}^{n)}(\kappa_m^2
r_m,r_m;H_j,f_2)+o(1)\frac{T(r_m)}{r_m^{\omega_m}}\\
&=&(n+1)S_{\mathcal{Z}_{\delta_m}^{k_m}}(r_m,f_{2})+o(1)\frac{T(r_m)}{r_m^{\omega_m}},
\end{eqnarray*}
i.e.,
\begin{eqnarray}
S_{\mathcal{Z}_{\delta_m}^{k_m}}(r_m,f_{1})=S_{\mathcal{Z}_{\delta_m}^{k_m}}(r_m,f_{2})+o(1)\frac{T(r_m)}{r_m^{\omega_m}}.\label{eqnN10}
\end{eqnarray}
It follows from (\ref{eqnN6}), (\ref{eqnN7}), (\ref{eqnN8}) and (\ref{eqnN9}) that
$$\frac{1}{n}\sum_{\substack{j=1\\j\neq
i,\sigma(i)}}^{2n+2}C_{\mathcal{Z}_{\delta_m}^{k_m}}^{n)}(\kappa_m^2
r_m,r_m;H_j,f_l)+\sum_{j=
i,\sigma(i)}C_{\mathcal{Z}_{\delta_m}^{k_m}}^{n)}(\kappa_m^2 r_m,r_m;H_j,f_{l})+o(1)\frac{T(r_m)}{r_m^{\omega_m}}$$
$$=\sum_{l'=1}^2S_{\mathcal{Z}_{\delta_m}^{k_m}}(r_m,f_{l'})+o(1)\frac{T(r_m)}{r_m^{\omega_m}}$$
and
$$\frac{1}{n}\sum_{j=1}^{2n+2}C_{\mathcal{Z}_{\delta_m}^{k_m}}^{n)}(\kappa_m^2
r_m,r_m;H_j,f_l)+\frac{n-1}{n}\sum_{j=
i,\sigma(i)}C_{\mathcal{Z}_{\delta_m}^{k_m}}^{n)}(\kappa_m^2 r_m,r_m;H_j,f_{l})$$
$$=\frac{n+1}{n}S_{\mathcal{Z}_{\delta_m}^{k_m}}(r_m,f_{l})+\frac{n-1}{n}\sum_{j=
i,\sigma(i)}C_{\mathcal{Z}_{\delta_m}^{k_m}}^{n)}(\kappa_m^2 r_m,r_m;H_j,f_{l})+o(1)\frac{T(r_m)}{r_m^{\omega_m}}.$$
Combining the above two equations and (\ref{eqnN10}), we obtain, for $l=1,2$,
\begin{eqnarray}
S_{\mathcal{Z}_{\delta_m}^{k_m}}(r_m,f_{l})=\sum_{j=
i,\sigma(i)}C_{\mathcal{Z}_{\delta_m}^{k_m}}^{n)}(\kappa_m^2 r_m,r_m;H_j,f_{l})+o(1)\frac{T(r_m)}{r_m^{\omega_m}}.\label{eqnN11}
\end{eqnarray}

By $h_1\equiv h_2$, we have, for $i=1,2$,
$$
\nu_{\langle{\bf f}_1,{\bf
a}_i\rangle}(z)=\nu_{\langle{\bf f}_2,{\bf
a}_i\rangle}(z),\ z\in B(\{z_m\};\{\varepsilon_m\})
$$
and
$
\langle{\bf f}_1,{\bf
a}_1\rangle=\frac{\langle{\bf f}_1,{\bf
a}_2\rangle}{\langle{\bf f}_2,{\bf
a}_2\rangle}\langle{\bf f}_2,{\bf
a}_1\rangle
$,
where $\frac{\langle{\bf f}_1,{\bf
a}_2\rangle}{\langle{\bf f}_2,{\bf
a}_2\rangle}$ has no zeros and poles on $B(\{z_m\};\{\varepsilon_m\})$. Now, we consider
\begin{eqnarray*}
P_1&=&\langle{\bf f}_1,{\bf a}_1\rangle\langle{\bf f}_2,{\bf a}_{n+1}\rangle-\langle{\bf f}_1,{\bf a}_{n+1}\rangle\langle{\bf f}_2,{\bf a}_1\rangle\\
&=&\langle{\bf f}_2,{\bf a}_1\rangle\left(\frac{\langle{\bf f}_1,{\bf
a}_2\rangle}{\langle{\bf f}_2,{\bf
a}_2\rangle}\langle{\bf f}_2,{\bf a}_{n+1}\rangle-\langle{\bf f}_1,{\bf a}_{n+1}\rangle\right)\\
&=&\langle{\bf f}_1,{\bf a}_1\rangle\left(\langle{\bf f}_2,{\bf a}_{n+1}\rangle-\frac{\langle{\bf f}_2,{\bf
a}_2\rangle}{\langle{\bf f}_1,{\bf
a}_2\rangle}\langle{\bf f}_1,{\bf a}_{n+1}\rangle\right).
\end{eqnarray*}
By the assumptions,
\begin{eqnarray}
&&C_{\mathcal{Z}_{\delta_m}^{k_m}}(\kappa_m^2 r_m,r_m;0,P_{1})\nonumber\\
&\ge& \sum_{\substack{j=1\\j\neq
n+1}}^{2n+2}C_{\mathcal{Z}_{\delta_m}^{k_m}}^{1)}(\kappa_m^2
r_m,r_m;H_j,f_l)+C_{\mathcal{Z}_{\delta_m}^{k_m}}(\kappa_m^2
r_m,r_m;H_1,f_l)\nonumber\\
&&+C_{\mathcal{Z}_{\delta_m}^{k_m}}^{n+1)}(\kappa_m^2
r_m,r_m;H_{n+1},f_l)\nonumber\\
&=&\sum_{\substack{j=1\\j\neq
n+1}}^{2n+2}C_{\mathcal{Z}_{\delta_m}^{k_m}}^{1)}(\kappa_m^2
r_m,r_m;H_j,f_l)+C_{\mathcal{Z}_{\delta_m}^{k_m}}(\kappa_m^2
r_m,r_m;H_1,f_l)\nonumber\\
&&+C_{\mathcal{Z}_{\delta_m}^{k_m}}(\kappa_m^2
r_m,r_m;H_{n+1},f_l)+o(1)\frac{T(r_m)}{r_m^{\omega_m}}.\label{eqnN12}
\end{eqnarray}
By (\ref{eqnN8}), (\ref{eqnN9}) and (\ref{eqnN12}), we have
\begin{eqnarray*}
&&\sum_{\substack{j=1\\j\neq 1,
n+1}}^{2n+2}C_{\mathcal{Z}_{\delta_m}^{k_m}}^{1)}(\kappa_m^2
r_m,r_m;H_j,f_l)+\sum_{j=1,n+1}C_{\mathcal{Z}_{\delta_m}^{k_m}}(\kappa_m^2
r_m,r_m;H_j,f_l)\\
&\ge &\sum_{\substack{j=1\\j\neq
n+1}}^{2n+2}C_{\mathcal{Z}_{\delta_m}^{k_m}}^{1)}(\kappa_m^2
r_m,r_m;H_j,f_l)+\sum_{j=1,n+1}C_{\mathcal{Z}_{\delta_m}^{k_m}}(\kappa_m^2
r_m,r_m;H_j,f_l)+o(1)\frac{T(r_m)}{r_m^{\omega_m}},
\end{eqnarray*}
i.e.,
\begin{eqnarray}
C_{\mathcal{Z}_{\delta_m}^{k_m}}^{1)}(\kappa_m^2
r_m,r_m;H_1,f_l)=o(1)\frac{T(r_m)}{r_m^{\omega_m}},\ l=1,2.\label{eqnN13}
\end{eqnarray}
From (\ref{eqnN11}) and (\ref{eqnN13}), it follows that
\begin{eqnarray}
S_{\mathcal{Z}_{\delta_m}^{k_m}}(r_m,f_{l})=C_{\mathcal{Z}_{\delta_m}^{k_m}}^{n)}(\kappa_m^2
r_m,r_m;H_{n+1},f_l)+o(1)\frac{T(r_m)}{r_m^{\omega_m}},\ l=1,2.\label{eqnN14}
\end{eqnarray}

Set $Q_i=\langle{\bf f}_1,{\bf a}_i\rangle\langle{\bf f}_2,{\bf a}_{n+1}\rangle-\langle{\bf f}_1,{\bf a}_{n+1}\rangle\langle{\bf f}_2,{\bf a}_i\rangle$ and
put $\mathcal{Q}=\{1\le i\le 2n+2:Q_i\not\equiv 0\}$. Suppose that $\sharp \mathcal{Q}\ge n+2$. Take $n+2$ elements of $\mathcal{Q}$, written as $i_j(1\le j\le n+2)$. By $Q_{i_j}\not\equiv 0$, we have
\begin{eqnarray*}
&&\sum_{l'=1}^2S_{\mathcal{Z}_{\delta_m}^{k_m}}(r_m,f_{l'})+O(1)\ge C_{\mathcal{Z}_{\delta_m}^{k_m}}(\kappa_m^2
r_m,r_m;0,Q_{i_j})\\
&\ge&C_{\mathcal{Z}_{\delta_m}^{k_m}}^{n+1)}(\kappa_m^2
r_m,r_m;H_{i_j},f_l)+C_{\mathcal{Z}_{\delta_m}^{k_m}}^{n+1)}(\kappa_m^2
r_m,r_m;H_{n+1},f_l)\\
&&+\sum_{\substack{j=1\\j\neq i_j,
n+1}}^{2n+2}C_{\mathcal{Z}_{\delta_m}^{k_m}}^{1)}(\kappa_m^2
r_m,r_m;H_j,f_l)\\
&=&C_{\mathcal{Z}_{\delta_m}^{k_m}}^{n)}(\kappa_m^2
r_m,r_m;H_{i_j},f_l)+C_{\mathcal{Z}_{\delta_m}^{k_m}}^{n)}(\kappa_m^2
r_m,r_m;H_{n+1},f_l)\ \
(\text{by\ (\ref{eqnN9})})\\
&&+\sum_{\substack{j=1\\j\neq i_j,
n+1}}^{2n+2}C_{\mathcal{Z}_{\delta_m}^{k_m}}^{1)}(\kappa_m^2
r_m,r_m;H_j,f_l)+o(1)\frac{T(r_m)}{r_m^{\omega_m}}\\
&=&\frac{n-1}{n}C_{\mathcal{Z}_{\delta_m}^{k_m}}^{n)}(\kappa_m^2
r_m,r_m;H_{n+1},f_l)+\frac{n-1}{n}C_{\mathcal{Z}_{\delta_m}^{k_m}}^{n)}(\kappa_m^2
r_m,r_m;H_{i_j},f_l)\\
&&+\frac{1}{n}\sum_{j=1}^{2n+2}C_{\mathcal{Z}_{\delta_m}^{k_m}}^{n)}(\kappa_m^2
r_m,r_m;H_j,f_l)+o(1)\frac{T(r_m)}{r_m^{\omega_m}}\ \
(\text{by\ (\ref{eqnN6})})\\
&=&\frac{n-1}{n}S_{\mathcal{Z}_{\delta_m}^{k_m}}(r_m,f_l)+\frac{n-1}{n}C_{\mathcal{Z}_{\delta_m}^{k_m}}^{n)}(\kappa_m^2
r_m,r_m;H_{i_j},f_l)\ \
(\text{by\ (\ref{eqnN14})})\\
&&+\frac{n+1}{n}S_{\mathcal{Z}_{\delta_m}^{k_m}}(r_m,f_l)+o(1)\frac{T(r_m)}{r_m^{\omega_m}}\ \
(\text{by\ (\ref{eqnN7})})\\
&=&2S_{\mathcal{Z}_{\delta_m}^{k_m}}(r_m,f_l)+\frac{n-1}{n}C_{\mathcal{Z}_{\delta_m}^{k_m}}^{n)}(\kappa_m^2
r_m,r_m;H_{i_j},f_l)+o(1)\frac{T(r_m)}{r_m^{\omega_m}}\\
&=&\sum_{l=1}^2S_{\mathcal{Z}_{\delta_m}^{k_m}}(r_m,f_l)+\frac{n-1}{n}C_{\mathcal{Z}_{\delta_m}^{k_m}}^{n)}(\kappa_m^2
r_m,r_m;H_{i_j},f_l)+o(1)\frac{T(r_m)}{r_m^{\omega_m}}.\ \
(\text{by\ (\ref{eqnN10})})
\end{eqnarray*}
Thus,
\begin{eqnarray}
C_{\mathcal{Z}_{\delta_m}^{k_m}}^{n)}(\kappa_m^2
r_m,r_m;H_{i_j},f_l)=o(1)\frac{T(r_m)}{r_m^{\omega_m}},\ l=1,2.\label{eqnN15}
\end{eqnarray}
Using Theorem \ref{thm2.1} for $\{H_{i_j}\}_{j=1}^{n+2}$, we have, by (\ref{eqnN1}), (\ref{eqnN2}), (\ref{eqnN3}) and (\ref{eqnN15}),
\begin{eqnarray*}
\sum_{l=1}^{2}S_{\mathcal{Z}_{\delta_m}^{k_m}}(r_m,f_l)\le\sum_{l=1}^{2}\sum_{j=1}^{n+2}C_{\mathcal{Z}_{\delta_m}^{k_m}}^{n)}(\kappa_m^2
r_m,r_m;H_{i_j},f_l)+o(1)\frac{T(r_m)}{r_m^{\omega_m}}
=o(1)\frac{T(r_m)}{r_m^{\omega_m}}.
\end{eqnarray*}
This implies that $\frac{\sqrt{2}}{10}\frac{T(r_m)}{r_m^{\omega_m}}\le o(1)\frac{T(r_m)}{r_m^{\omega_m}}$ ($m\rightarrow \infty$) by (\ref{eqnN4}), a contradiction is derived. Hence $\sharp \mathcal{Q}\le n+1$, which is contradict to $f\not\equiv g$. \qed

\begin{re}
The proof of Theorem \ref{thm1.7} is similar to that of Theorem \ref{thm1.5} if one note that
$$
T(e^{r_m})\geq \frac{r_m}{\log 2r_m}T(2r_m)+(1-\frac{r_m}{\log 2r_m})T(1),
$$
i.e., $T(2r_m)=o(T(e^{r_m}))$.
\end{re}

\

{\sl Proof of Theorem \ref{thm1.10}.} \ Suppose $f\not\equiv g$ and set $f_1:=f$ and
$f_2:=g$. Similar to the proof of Theorem \ref{thm1.2}, we take $r'_m$ in $[r_m/2,r_m]$ outside the exceptional set
in second main theorems used blow which only has finite linear measure. Now we take $r=r'_m$.
Then
we have
$$\sum_{j=1,2}(N^{1)}(r^\sigma,a_j,f_l)-N^{1)}(2r,a_j,f_l))+\sum_{j=
3,4}(N^{2)}(r^\sigma,a_j,f_l)-N^{2)}(2r,a_j,f_l))$$$$\le T(r^\sigma)+O(1)$$
and
$$\sum_{j=3,4}(N^{1)}(r^\sigma,a_j,f_l)-N^{1)}(2r,a_j,f_l))+\sum_{j=
1,2}(N^{2)}(r^\sigma,a_j,f_l)-N^{2)}(2r,a_j,f_l))$$$$\le T(r^\sigma)+O(1).$$

Summing-up the above inequalities, we obtain
$$
2\sum_{j=1}^4(N^{1)}(r^\sigma,a_j,f_l)-N^{1)}(2r,a_j,f_l))
$$$$+\sum_{j=1}^4((N^{2)}(r^\sigma,a_j,f_l)-N^{1)}(r^\sigma,a_j,f_l))-(N^{2)}(2r,a_j,f_l)-N^{1)}(2r,a_j,f_l)))$$
$$\le 2T(r^\sigma)+O(1).$$
We have, for $l=1,2$,
\begin{eqnarray*}
&&2\sum_{j=1}^{4}N^{1)}(r^\sigma,a_j,f_l)
+\sum_{j=1}^{4}(N_{>1}^{1)}(r^\sigma,a_j,f_l)-N_{>1}^{1)}(2r,a_j,f_l))\\
&\le& 2T(r^\sigma)+2\sum_{j=1}^{4}N^{1)}(2r,a_j,f_l)+O(1),
\end{eqnarray*}
where $N_{>1}^{1)}(r^\sigma,a_j,f_l)$ and $N_{>1}^{1)}(2r,a_j,f_l)$ are
the counting functions in which we only consider the zeros of
$\langle{\bf f}_l,{\bf a}_j\rangle$ with multiplicity $>1$.

Using the second main theorem, we have
\begin{eqnarray*}
&&4T(r^\sigma)+\sum_{j=1}^{4}\sum_{l=1}^2(N_{>1}^{1)}(r^\sigma,a_j,f_l)-N_{>1}^{1)}(2r,a_j,f_l))\\
&\le&4T(r^\sigma)+8T(2r)+O(\log^+rT(r^\sigma)).
\end{eqnarray*}
It implies that
\begin{eqnarray}
\sum_{j=1}^{4}((N_{>1}^{1)}(r^\sigma,a_j,f_1)-N_{>1}^{1)}(2r,a_j,f_1))+(N_{>1}^{1)}(r^\sigma,a_j,f_2)-N_{>1}^{1)}(2r,a_j,f_2)))\nonumber\\
\le 8T(2r)+O(\log^+rT(r^\sigma)).\label{eqn00n-10}
\end{eqnarray}

By using the same notations and repeating the same argument in the proof of Theorem \ref{thm1.5}, we have that if $\tau\geq 2$, then the holomorphic curve
$h:{\mathbb{C}}\rightarrow {\mathcal{P}}^{\tau-1}({\mathbb{C}})$ with
a reduced representation
$$
{\bf h}=\left(b_1\tilde hh_{I''_1},b_2\tilde hh_{I''_2},..., b_{\tau}\tilde
hh_{I''_{\tau}}\right)
$$
is linearly non-degenerate over ${\mathbb{C}}$.

Using Cartan's second main theorem, we hold that
\begin{eqnarray}
T(r^\sigma,h)
&\le&(\tau-1)\sum_{i=1}^{\tau}N^{1)}(
r^\sigma,0,\tilde{h}h_{I''_{i}})+(\tau-1)N^{1)}(
r^\sigma,0,\tilde
{h}\frac{h_{I'_0} }{h_{I'}})\nonumber\\
&+&o(T(r^\sigma,h)),\label{eqn00n-13}
\end{eqnarray}
where $T(r^\sigma,h)\le O(T(r^\sigma))$.

On the other hand,
\begin{eqnarray*}
N^{1)}(r^\sigma,0,\tilde{h}h_{I''_{i}})\le N^{1)}(r^\sigma,0,h_{I_i})+N^{1)}(r^\sigma,0,1/h_{I^c_i})
\end{eqnarray*}
and
\begin{eqnarray*}
N^{1)}(r^\sigma,0,\tilde
{h}\frac{h_{I'_0} }{h_{I'}})\le N^{1)}(r^\sigma,0,h_{I_0})+N^{1)}(r^\sigma,0,1/h_{I^c_0}).
\end{eqnarray*}

Now we consider the zeros and poles of $h_i$. If $z$ is a zero(a pole) of $h_i$ in $A(\{r_m\},\sigma)$, then
$\nu_{\langle{\bf f}_1,{\bf a}_i\rangle}(z)\neq \nu_{\langle{\bf f}_2,{\bf a}_i\rangle}(z)$. Hence
$\nu_{\langle{\bf f}_l,{\bf a}_i\rangle}(z)>1$.

By (\ref{eqn00n-10}), we have
\begin{eqnarray*}
&&N^{1)}(r^\sigma,0,\tilde{h}h_{I''_{i}})\\
&\le& \sum_{j=1}^{4}((N_{>1}^{1)}(r^\sigma,a_j,f_1)-N_{>1}^{1)}(2r,a_j,f_1))+(N_{>1}^{1)}(r^\sigma,a_j,f_2)-N_{>1}^{1)}(2r,a_j,f_2)))\\
&&+\sum_{j=1}^{4}(N^{1)}(2r,a_j,f_1)-N^{1)}(2r,a_j,f_2))\\
&\le& 12T(2r)+o(T(r^\sigma))
\end{eqnarray*}
and
\begin{eqnarray*}
N^{1)}(r^\sigma,0,\tilde
{h}\frac{h_{I'_0} }{h_{I'}})\le 12T(2r)+o(T(r^\sigma)).
\end{eqnarray*}
Combining with the above two inequalities and (\ref{eqn00n-13}) yields
\begin{eqnarray}
T(r^\sigma,h)\le 12(\tau^2-1)T(2r)+o(T(r^\sigma)).\label{eqn00n-14}
\end{eqnarray}

Since $ f_1=f_2$ on $A(\{r_m\},\sigma)\cap\bigcup_{j=1}^{4}
f^{-1}(a_j)$. That is to say, $\frac{h_I}{h_J}=1$ on the set
$A(\{r_m\},\sigma)\cap\bigcup_{j\in((I\cup J)\backslash(I\cap
J))^c}f^{-1}(a_j)$. Hence,
\begin{eqnarray*}
3T(r^\sigma,h)&\ge & N^{1)}(r^\sigma,0,\frac{h_{I_1}}{h_{I_2}}-1)+N^{1)}(r^\sigma,0,\frac{h_{I_2}}{h_{I_0}}-1)+N^{1)}(r^\sigma,0,\frac{h_{I_0}}{h_{I_1}}-1)+O(1)\nonumber\\
&\ge&\sum_{j=1}^{4}N^{1)}(r^\sigma,a_j,f_{l})-\sum_{j=1}^{4}N^{1)}(2r,a_j,f_{l})+O(1),\ \ l=1,2.
\end{eqnarray*}
Using the second main theorem again, we have
\begin{eqnarray*}
3T(r^\sigma,h)+4T(2r,f_l)&\ge&3T(r^\sigma,h)+\sum_{j=1}^{4}N^{1)}(2r,a_j,f_{l})\\
&\ge&2T(r^\sigma,f_l)+o(T(r^\sigma,f_l))\ \ l=1,2.
\end{eqnarray*}
Hence,
\begin{eqnarray}
T(r^\sigma,h)+\frac{2}{3}T(2r)\ge \frac{1}{3}T(r^\sigma)+o(T(r^\sigma)).\label{eqn00n-15}
\end{eqnarray}
In view of (\ref{eqn00n-14}) and (\ref{eqn00n-15}),
$$
\frac{1}{3}T(r^\sigma)+o(T(r^\sigma))\le (12(\tau^2-1)+\frac{2}{3})T(2r)+o(T(r^\sigma)).
$$
By
$$
T(r^\sigma)\geq \frac{\sigma\log r}{\log 2r}
T(2r)+(1-\sigma)T(1),$$
we have $\frac{\sigma}{3}\le 12(\tau^2-1)+\frac{2}{3}$. We note that $\tau\le 5$, which is contradict to $\sigma>866$. Hence $\tau=1$. Then the proof of Theorem \ref{thm1.10} can be completed. \qed

 \vskip 0.5cm


\end{document}